\documentclass[draft, 11pt]{amsart}
\usepackage{amsmath, amsthm, amscd, amssymb, amsfonts, amsxtra, verbatim}
\usepackage{a4}

\usepackage{color}

\def\zp{M_{p,a}^{b,c}(\mathfrak{a})}

\def\pbgs{{\varepsilon }}

\def\MSpin{\operatorname{MSpin}}
\def\RMSpin{\widetilde{\operatorname{M}}\operatorname{Spin}}
\def\qedbox{\hbox{$\rlap{$\sqcap$}\sqcup$}}

\newcommand{\N}{\mathbb N}
\newcommand{\Z}{\mathbb Z}
\newcommand{\Q}{\mathbb Q}
\newcommand{\R}{\mathbb R}
\newcommand{\C}{\mathbb C}

\newcommand{\G}{\Gamma}
\newcommand{\g}{\gamma}
\newcommand{\ld}{\lambda}
\newcommand{\Ld}{\Lambda}
\newcommand{\vep}{\varepsilon}

\newcommand{\f}{\frac}
\newcommand{\tf}{\tfrac}

\newcommand{\arr}{\rightarrow}

\newcommand{\sk}{\smallskip}
\newcommand{\msk}{\medskip}

\newcommand{\I}{\text{\sl Id}}

\newcommand{\man}{M_\Gamma}
\newcommand{\GL}{\text{GL}}
\newcommand{\on}{\text{O}(n)}
\newcommand{\son}{\text{SO}(n)}
\newcommand{\spin}{\text{Spin}(n)}
\newcommand{\s}{\text{S}}

\numberwithin{equation}{section}

\theoremstyle{plain}
\newtheorem{theorem}{Theorem}[section]
\newtheorem{proposition}[theorem]{Proposition}
\newtheorem{lemma}[theorem]{Lemma}
\newtheorem{corollary}[theorem]{Corollary}
\theoremstyle{definition}
\newtheorem{definition}[theorem]{Definition}
\newtheorem{remark}[theorem]{Remark}
\newtheorem{example}[theorem]{Example}

\begin{document}

\makeatother
\def\Op{\operatorname{Op}}

\title[eta invariants and equivariant bordism]
{The eta invariant and equivariant bordism of flat manifolds with cyclic holonomy group of odd prime order}
\begin{author}[P.B. Gilkey, R.J. Miatello, R.A. Podest\'a]{Peter B.
Gilkey$^{(*)}$, Roberto J. Miatello$^{(**)}$ and Ricardo A.
Podest\'a$^{(**)}$}\end{author}
\begin{address}{PG: Mathematics Department, University of Oregon, Eugene, OR
97403, USA}\end{address}
\begin{email}{gilkey@uoregon.edu}
\end{email}
\begin{address}{RM, RP: FaMAF-CIEM, Universidad Nacional de C\'ordoba, 5000
C\'ordoba, Argentina}
\end{address}
\begin{email}{miatello@mate.uncor.edu, podesta@mate.uncor.edu}\end{email}

\begin{abstract}
We study the 
eta invariants of compact flat spin manifolds of dimension $n$ with holonomy group $\mathbb{Z}_p$, where $p$ is an odd prime. We find explicit expressions for the twisted and relative eta invariants and show that the reduced eta invariant is always an integer, except in a single case, when $p=n=3$. We use the expressions obtained to show that any such manifold is trivial in the appropriate reduced equivariant spin bordism group.
\end{abstract}

\keywords{Flat manifolds, eta invariant, equivariant bordism.
\newline 2000 {\it Mathematics Subject Classification.} 58J53
\newline $^{(*)}$ Partially supported by Project MTM2006-01432 (Spain)
\newline $^{(**)}$ Partially supported by CONICET, Foncyt  and SECyT-UNC}

\maketitle

\section{Introduction}\label{intro}
If $M$ is  a Riemannian manifold having finite holonomy group $F$, we shall say that $M$ is an \emph{$F$-manifold}.
As it is well known, any such  manifold is flat, by the Ambrose-Singer theorem \cite{AS}. Let $p$ be an odd prime. Throughout this paper, $M$ will be a compact flat Riemannian $n$-manifold with cyclic holonomy group of order $p$, $F \simeq \Z_p$, that is, in the above terminology, a flat $\Z_p$-manifold. Such a manifold is of the form $M_\G =\G \backslash \R^n$, with $\G$ a Bieberbach group such that $\Ld \backslash \G \simeq \Z_p$, where $\Ld$ denotes the translation lattice of $\G$. Flat $\Z_p$-manifolds have been fully classified by Charlap \cite{Ch65}, who used Reiner's classification \cite{Re} of integral representations of the group $\Z_p$. A convenient description of these manifolds will be given in \S \ref{S2-ZPM} and \S \ref{zpabc}.

\sk
It turns out that any $\Z_p$-manifold $M=M_\Gamma$ is spin, that is, it admits spin structures defined on its tangent bundle. Actually, we show that such $M$ admits exactly $2^{\beta_1}$ spin structures, where $\beta_1 = \beta_1(M)$ is the first Betti number of $M$.
In particular, we shall see that there is a unique spin structure such that the corresponding homomorphism
$\varepsilon : \Gamma \rightarrow \textrm{Spin} (n)$ is trivial on the translation lattice $\Lambda$ of $\Gamma$,
called the spin structure of \textit{trivial type}, or \textit{trivial} spin structure, for short.

\msk
One of the main goals of this paper is to obtain a rather explicit expression for the reduced eta invariant of an arbitrary spin $\Z_p$-manifold $M$, associated to the spinorial Dirac operator twisted by characters. We will make use of results in \cite{MiPo06}, where the spectra of twisted Dirac operators on flat bundles over arbitrary  compact flat spin manifolds is determined. Also, we refer to \cite{MR09} for a survey on the spectral geometry of flat manifolds and for a more complete bibliography than we can present here.

\sk
In a general setting, let $M$ be an arbitrary Riemannian $n$-manifold, let $D$ be a self adjoint partial differential operator of Dirac type, and
we let $Spec_D(M)=\{\lambda_n\}$ be the set of eigenvalues of $D$, counted with multiplicities.
Results of Seeley \cite{Se-67} show that the \textit{eta series}
\begin{equation}\label{etaseries}
\eta_D(s) := \sum_{\lambda_i\ne
0}\operatorname{sign}(\lambda_i)|\lambda_i|^{-s}
\end{equation}
is holomorphic for $\operatorname{Re}(s)>n$.
Furthermore, $\eta_D(s)$ has a meromorphic extension to $\mathbb{C}$ called the \textit{eta function} (that we still denote by $\eta_D(s)$) with isolated simple poles on the real axis. In their study of the index theorem for manifolds with boundary, Atiyah, Patodi, and Singer showed that $0$ is a regular value of the eta function (\cite{APS76-I,APS76-II,APS76-III}, see also \cite{Gi81})
and defined the \textit{eta invariant}
\begin{equation*}\label{eta invariant APS}
\eta_D := \eta_D(0),
\end{equation*}
as a measure of the spectral asymmetry of $D$, and the invariant 
\begin{equation}\label{etainvariant}
\bar \eta_D := \tf12 \big( \eta_D + \dim \ker D \big) \,,
\end{equation}
which is also referred to as the eta invariant by some authors.

\msk
We now return to $\Z_p$-manifolds. Let $\vep_h$ be a spin structure on a  $\Z_p$-manifold $M$. We consider the \textit{spin Dirac operator} $D_{\ell}$ twisted by a character $\rho_\ell$ of $\Z_p$, for $0\le \ell \le p-1$ (with $\ell=0$ corresponding to the untwisted case), acting on smooth sections of the twisted spinor bundle of $(M,\vep_h)$ (see \S\ref{S2-STDO}). Denote the associated eta series by $\eta_{\ell,h}(s)$ and the corresponding eta invariants by $\eta_{\ell,h}$ and $\bar \eta_{\ell,h}$, respectively.

\sk
In \S \ref{specasymm} we study the
spectrum $Spec_{D_{\ell}}(M,\vep_h)$ and determine its contribution to formula \eqref{etaseries}. We show that non trivial eta series can only occur for the so called \textit{exceptional} $\Z_p$-manifolds, in the terminology of Charlap \cite{Ch88}. Using the information on the spectrum in \S \ref{specasymm}, in Theorem \ref{thm.eta series} we obtain explicit expressions for the eta function $\eta_{\ell,h}(s)$ in terms of Hurwitz zeta functions $\zeta(s,\f jp)$, with $1\le j \le p-1$ (see \eqref{hurwitz}).
From these expressions for $\eta_{\ell,h}(s)$ we get formulas for the  eta invariants $\eta_{\ell,h}$, by evaluation at $s=0$ (Theorem \ref{thm.etainvs}).

It is known that the dimension of the kernel of $D_{\ell}$ on compact flat manifolds is non zero only for the spin structure of trivial type \cite{MiPo06}. In Proposition \ref{harmonics} we give an expression for $\dim \ker D_{\ell}$, which, together with our previous computations, and in light of  \eqref{etainvariant}, yield an expression for
$\bar \eta_{\ell,h}$ and for the difference 
$\bar\eta_{\ell,h} - \bar\eta_{0,h}$ 
of $M$ (see Remark \ref{remark} and Corollary \ref{eta untwisted}).

\sk
The integrality of $\bar \eta_{\ell,h}$, except in a single case, 
is one of the main results in this paper.


\begin{theorem}\label{twisted etas mod Z}
Let $p$ be an odd prime and let $\ell \in \N_0$, with $0\le \ell \le p-1$.
Consider a $\Z_p$-manifold $M$ of dimension $n$ equipped with a spin structure $\vep$.
Then
\begin{equation*}
\bar \eta_{\ell} \equiv 0 \mod \Z
\end{equation*}
unless $p=n=3$, and in this case $\bar \eta_{\ell} \equiv \f 23 \mod \Z$.
Furthermore, in all cases,
$\bar \eta_{\ell} - \bar \eta_{0}  \equiv 0  \mod \Z$.
\end{theorem}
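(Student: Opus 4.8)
The plan is to assemble the explicit data already gathered in the earlier sections---Theorem~\ref{thm.eta series}, Theorem~\ref{thm.etainvs} and Proposition~\ref{harmonics}---and to finish with an elementary congruence computation; the only genuinely new work is the bookkeeping that isolates the case $p=n=3$.

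\emph{Step 1: reduction to exceptional manifolds.} By the spectral analysis of \S\ref{specasymm}, the eta series \eqref{etaseries} of $D_\ell$ vanishes identically whenever $M$ is not exceptional. In that case $\eta_\ell=0$, so by \eqref{etainvariant} we have $\bar\eta_\ell=\tf12\dim\ker D_\ell$; since by Proposition~\ref{harmonics} the kernel is nonzero only for the spin structure of trivial type and its dimension is then even, we get $\bar\eta_\ell\in\Z$, and likewise $\bar\eta_0\in\Z$, so both assertions hold trivially. From now on we may assume $M$ exceptional.

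\emph{Step 2: the explicit form of $\bar\eta_\ell$.} Substituting $\zeta(0,\tf jp)=\tf12-\tf jp$ into the formula of Theorem~\ref{thm.etainvs} writes
\[
\eta_\ell\;=\;\tf12\sum_{j=1}^{p-1}a_{\ell,j}\;-\;\tf1p\sum_{j=1}^{p-1}j\,a_{\ell,j}\;\in\;\tf1{2p}\Z,
\]
with integer coefficients $a_{\ell,j}$ read off from Theorem~\ref{thm.eta series}; these depend on $\ell$, on $p$, and---through the multiplicities of the trivial, cyclotomic and regular summands in the integral holonomy representation---on $n$. In view of Proposition~\ref{harmonics} and \eqref{etainvariant}, proving the theorem amounts to showing
\[
\eta_\ell+\dim\ker D_\ell\;\equiv\;0\ \bmod 2\Z
\]
in all cases, and $\equiv\tf43\bmod 2\Z$ when $p=n=3$.

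\emph{Step 3: the number theory.} The denominator $p$ is eliminated by the classical power-sum congruence
\[
\sum_{j=1}^{p-1}j^{m}\;\equiv\;
\begin{cases}
-1\ \bmod p,& (p-1)\mid m,\\
\ \ 0\ \bmod p,& (p-1)\nmid m,
\end{cases}
\]
applied to the polynomial-in-$j$ shape of the coefficients $a_{\ell,j}$ produced by Theorem~\ref{thm.eta series}: one checks that the exponents $m$ actually occurring for an exceptional $\Z_p$-manifold satisfy $(p-1)\nmid m$---so that $p\mid\sum_j j\,a_{\ell,j}$ and hence $\eta_\ell\in\tf12\Z$---except in the single degenerate case $p=n=3$, where the exponent $m=2=p-1$ appears and $\sum_{j=1}^{2}j^{2}=5\equiv-1\bmod 3$ forces the fractional part $\tf23$. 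The leftover factor $\tf12$ is then removed by a parity check on $\sum_j a_{\ell,j}$ and on $\dim\ker D_\ell$, using the symmetry relating $a_{\ell,j}$ and $a_{\ell,p-j}$ together with the explicit value of $\dim\ker D_\ell$ in Proposition~\ref{harmonics}; this yields $\bar\eta_\ell\equiv0\bmod\Z$, respectively $\equiv\tf23$.

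\emph{Step 4: the relative invariant, and the main obstacle.} For $\bar\eta_\ell-\bar\eta_0$ one runs the same computation and observes that the only contribution that can fail to lie in $\Z$---the fractional part produced in Step~3---is independent of $\ell$; this is transparent from the descriptions in Corollary~\ref{eta untwisted} and Remark~\ref{remark}. Hence $\bar\eta_\ell-\bar\eta_0\in\Z$ always, and in particular when $p=n=3$ every $\bar\eta_\ell$, including $\bar\eta_0$, is congruent to $\tf23\bmod\Z$. The main obstacle is Step~3: one must track carefully how the holonomy multiplicities enter the coefficients $a_{\ell,j}$, match the resulting exponents against the divisibility condition $(p-1)\mid m$, and verify that this condition is met precisely for the three-dimensional $\Z_3$-manifold---along with the parallel, and equally fiddly, parity accounting for the factors of $\tf12$.
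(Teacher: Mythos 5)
Your plan tracks the paper's actual proof in outline: reduce to the exceptional case via the vanishing of the eta series (Remark~\ref{symmetric spectrum}), evaluate $\eta_\ell$ from Theorem~\ref{thm.eta series} at $s=0$, add $\dim\ker D_\ell$ from Proposition~\ref{harmonics}, and do a parity count. Where you genuinely differ is in how the $\tfrac1p$ denominator in $\eta_\ell$ is controlled when $a$ is odd and $p\equiv 3\,(4)$. The paper runs this through the Dirichlet class number formula \eqref{eq.dirichlet}: since $\tfrac1p\sum_{j}(\tfrac jp)j = -2h_{-p}/\omega_{-p}$, this is an integer for $p\ge 5$ because $\omega_{-p}=2$, while $\omega_{-3}=6$ isolates the tricosm; this route is what yields the bonus identity $\eta_{0,1}=-4p^{(a-1)/2}h_{-p}/\omega_{-p}$ stated in Remark~\ref{rem. hp}. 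Your Step~3 instead replaces $(\tfrac jp)$ by $j^{(p-1)/2}\bmod p$ and invokes the power-sum congruence $\sum_{j=1}^{p-1}j^m\equiv -1$ or $0\pmod p$ according to whether $(p-1)\mid m$; the offending exponent is $m=(p+1)/2$, and $(p-1)\mid (p+1)/2$ forces $p=3$. This is more elementary, avoids bringing in $h_{-p}$ at all, and cleanly identifies $p=3$ as the obstruction, at the cost of not recovering the class-number interpretation. Both approaches prove the same divisibility.

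Two places where your sketch would need tightening before it counts as a proof. First, your Step~2 template $\eta_\ell=\tfrac12\sum_j a_{\ell,j}-\tfrac1p\sum_j j\,a_{\ell,j}$ is modelled on $\zeta(0,\tfrac jp)=\tfrac12-\tfrac jp$, which is the $h=1$ case; for the nontrivial spin structure the Hurwitz arguments are $\tfrac{2j+1}{2p}$ (Theorem~\ref{thm.eta series}(ii)), so $\zeta(0,\cdot)=\tfrac{p-1}{2p}-\tfrac jp$ and the $a_{\ell,j}$ carry shifted Legendre symbols $(\tfrac{2\ell\pm(2j+1)}{p})$; the power-sum reduction and the antisymmetry $a_{\ell,j}=-a_{\ell,p-j}$ you use to kill $\tfrac12\sum_j a_{\ell,j}$ both need to be redone in that form. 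Second, the final parity step ``\,$\eta_\ell+\dim\ker D_\ell$ even\,'' is asserted but not carried out; this is precisely the content of the paper's Corollary~\ref{parity} (odd/even according to $\ell\ne 0$ and $h$) combined with the parity statement in Proposition~\ref{harmonics}, and it is case-by-case rather than following from a single symmetry. You correctly flag this bookkeeping as ``the main obstacle,'' but it is where the actual work lives, so the proposal is a sound strategy rather than a complete argument.
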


\goodbreak
The theorem says that the eta invariants $\bar \eta_{\ell}$, $0\le \ell \le p-1$, are integers except in the case of the so called \textsl{tricosm}, the only 3-dimensional $\Z_3$-manifold (see Example \ref{tricosm}).

\sk
We point out that for certain $\Z_p$-manifolds with $p\equiv 3$ mod 4, there is a close connection between the spectral invariants and invariants coming from number theory. More precisely, when $\beta_1(M) = 1$ and $(n-1)/(p-1)$ is odd, the  eta invariants $\eta_{\ell}$ are given in terms of sums involving Legendre symbols $(\f jp)$ with $0\le j \le p-1$ (see Remark \ref{rem. hp}). Moreover, in the untwisted case, $\eta_{0}$ is a simple multiple of the class number $h_{-p}$ of the imaginary quadratic field $\Q(\sqrt {-p})$. Namely, these manifolds have only 2 spin structures and we have
$$\eta_{0,1} =  -4 \, p^{\f{a-1}2} \, \tfrac{h_{-p}}{\omega_{-p}}, \qquad \eta_{0,2}  = \big( \big( \tf{2}p \big) -1 \big) \, \eta_{0,1},$$
where $\omega_{-p}$ is the number of $p^\mathrm{{th}}$-roots of unity in $\Q(\sqrt {-p})$.

\msk
As a main application, in Section \ref{sect-5} we use the computations in Section \ref{sect-4} to study the equivariant spin bordism group of  $\Z_p$-manifolds. We recall that two compact closed spin manifolds $M_1$ and $M_2$ are said to be \textit{bordant} if there exists a compact spin manifold $N$ so that the boundary  of $N$ is  $M_1 \cup -M_2$ with the inherited spin structure, where $-M_2$ denotes $M_2$ with the opposite orientation. If  $M_1$ and $M_2$ have equivariant $\Z_p$-structures (see Section \ref{sect-5}), we say $N$ is an \textit{equivariant bordism} between $M_1$ and $M_2$ if the $\Z_p$-structure extends over $N$. In this situation $M_1$ and $M_2$ are said to be \textit{equivariant Spin-bordant}. Bordism is an important topological concept first investigated by Thom. Theorem \ref{thm-1.2} below states that any $\Z_p$-manifold with the canonical $\Z_p$-structure is equivariant bordant to the same manifold with the trivial $\Z_p$-structure; i.e., vanishes in the reduced equivariant bordism group. We postpone until Section 5 a more precise description.

As it is known, the eta invariant is an analytic spectral invariant, that gives rise to topological invariants which
completely detect the equivariant $\mathbb{Z}_p$ spin bordism groups. The integrality results of Theorem \ref{twisted etas mod Z} then yield the following geometric and topological result, one of the main motivations for this investigation.

\begin{theorem}\label{thm-1.2}
Let $(M, \vep,\sigma_p)$ and $(M, \vep,\sigma_0)$ denote a $\Z_p$-manifold $M$ which is equipped with
a spin structure $\vep$ together with the canonical and the trivial equivariant
$\mathbb{Z}_p$-structures
$\sigma_p$ and $\sigma_0$ respectively. Then $$[(M,  \vep,\sigma_p)] - [(M, \vep,\sigma_0)]=0$$ in
the reduced equivariant spin bordism group $\RMSpin_n(B\mathbb{Z}_p)$.
\end{theorem}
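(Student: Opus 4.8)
The plan is to detect the class $x:=[(M,\vep,\sigma_p)]-[(M,\vep,\sigma_0)]$ by a complete family of eta-type invariants for the reduced bordism group and then, using Theorem~\ref{twisted etas mod Z}, to show that $x$ is killed by all of them. First one checks that $x$ really lies in $\RMSpin_n(B\mathbb{Z}_p)$: forgetting the equivariant structure sends both $[(M,\vep,\sigma_p)]$ and $[(M,\vep,\sigma_0)]$ to the single class $[(M,\vep)]\in\MSpin_n$, so $x$ lies in the kernel of the augmentation $\MSpin_n(B\mathbb{Z}_p)\to\MSpin_n$, which is by definition the reduced group. (This group is a finite abelian $p$-group, since $\widetilde H_*(B\mathbb{Z}_p)$ is all $p$-torsion in positive degrees.)

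Next I would introduce the detecting invariants: the Atiyah--Patodi--Singer $\mathbb{R}/\mathbb{Z}$-valued $\rho$-invariants of the virtual characters $\rho_\ell-\rho_0$ of $\mathbb{Z}_p$, $1\le\ell\le p-1$, which have virtual dimension $0$. Given a $\mathbb{Z}_p$-structure $\sigma$ on a spin manifold, let $L_\ell$ be the flat unitary line bundle obtained by pulling back the $\rho_\ell$-bundle along the classifying map, and set $\rho_\ell(M,\vep,\sigma):=\bar\eta_{D\otimes L_\ell}-\bar\eta_D\in\mathbb{R}/\mathbb{Z}$. The key input is the APS index theorem: if $N$ is a compact spin manifold with $\partial N=M$ over which $\sigma$ extends, then in the index formula for the twisted Dirac operator the interior integrand is $\hat A(N)$ times the Chern character of the coefficient bundle; since $L_\ell-\underline{\mathbb{C}}$ is flat and of rank $0$, this term vanishes in de Rham cohomology, and with a product metric near $\partial N$ the boundary transgression terms cancel as well, leaving only integer APS indices. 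Hence $\rho_\ell$ vanishes on equivariant boundaries, is additive under disjoint union, changes sign under orientation reversal, and is identically $0$ for the trivial structure $\sigma_0$ (then $L_\ell$ is trivial); so it descends to a homomorphism $\rho_\ell:\RMSpin_n(B\mathbb{Z}_p)\to\mathbb{R}/\mathbb{Z}$.

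With these in hand the argument closes quickly. By the $\mathbb{R}/\mathbb{Z}$-index theory of Atiyah--Patodi--Singer together with Gilkey's computation of the equivariant spin bordism of spherical space-form groups --- as recalled just before the statement of the theorem, where the eta invariant is said to detect these bordism groups completely --- the family $\{\rho_\ell\}_{\ell=1}^{p-1}$ jointly detects $\RMSpin_n(B\mathbb{Z}_p)$, i.e., $\bigcap_\ell\ker\rho_\ell=0$. It therefore suffices to evaluate each $\rho_\ell$ on $x$. On $[(M,\vep,\sigma_0)]$ the invariant is $0$; on $[(M,\vep,\sigma_p)]$ it equals $\bar\eta_\ell-\bar\eta_0\bmod\Z$ with respect to the canonical $\mathbb{Z}_p$-structure, where $\bar\eta_\ell$ and $\bar\eta_0$ are the twisted eta invariants computed in \S\ref{sect-4}. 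By Theorem~\ref{twisted etas mod Z}, $\bar\eta_\ell-\bar\eta_0\equiv 0\bmod\Z$ in all cases --- including the exceptional tricosm, where $\bar\eta_\ell\equiv\bar\eta_0\equiv\tf23\bmod\Z$ individually, so that the difference is still integral. Hence $\rho_\ell(x)=0$ for every $\ell$, and therefore $x=0$.

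The step I expect to be the genuine obstacle is the detection statement used in the third paragraph: that the $p-1$ twisted-Dirac $\rho$-invariants form a complete set of invariants for $\RMSpin_n(B\mathbb{Z}_p)$. I would not reprove this here but invoke it from the cited literature. The other delicate point, the honest bordism invariance of each $\rho_\ell$, is routine given APS once one uses the flatness of $L_\ell$ to kill the interior integrand; thus the real content of the theorem is entirely carried by the integrality result Theorem~\ref{twisted etas mod Z}.
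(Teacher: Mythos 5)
Your overall strategy — reduce the bordism statement to the integrality of relative eta invariants via a detection theorem, and then invoke Theorem~\ref{twisted etas mod Z} — is the right one and matches the paper's. But there is a real gap exactly at the point you flagged as the one you would "not reprove but invoke": your detection statement is \emph{not} what the cited results give.

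Theorem~\ref{thm-2.3}~(ii) (the paper's detection criterion, coming from \cite{Gi-88}) says that $\pi(\mathcal{M})$ vanishes in $\RMSpin_n(B\mathbb{Z}_p)$ provided that the relative eta invariants $(\bar\eta_\ell^\tau-\bar\eta_0^\tau)(\mathcal{M})$ vanish for \emph{every} finite-dimensional representation $\tau$ of $\operatorname{Spin}(n)$ and every character $\ell$. You only consider $\tau$ trivial, i.e.\ the plain Dirac operator twisted by $\rho_\ell$, and assert that the resulting $p-1$ invariants $\rho_\ell$ already form a complete family. That stronger claim is not in the cited literature (the lens-space generators of $\RMSpin_*(B\mathbb{Z}_p)$ as an $\MSpin_*$-module are detected only after allowing auxiliary coefficient bundles), and the paper does not use it.

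What the paper actually does to bridge this gap is the real content of the proof, and it uses the flatness of the $\mathbb{Z}_p$-manifold in an essential way. Because the spin structure on $M$ is flat, the principal $\operatorname{Spin}(n)$-bundle arises from an equivariant $\mathbb{Z}_{2p}$-structure. For an arbitrary $\tau$, the associated coefficient bundle twisted by $\rho_\ell$ is therefore flat and given by a representation $\nu$ of $\mathbb{Z}_{2p}\cong\mathbb{Z}_2\oplus\mathbb{Z}_p$. Writing $\nu=\nu_1\oplus\nu_2\vartheta$ with $\vartheta$ the nontrivial character of $\mathbb{Z}_2$ and expanding $\nu_1,\nu_2$ in the $\rho_i$, one obtains
\[
(\bar\eta_\ell^\tau-\bar\eta_0^\tau)(\mathcal{M})=\sum_{i=1}^{p-1}\Big\{n_i(\bar\eta_i-\bar\eta_0)(\mathcal{M})+\tilde n_i(\bar\eta_i-\bar\eta_0)(\tilde{\mathcal{M}})\Big\},
\]
where $\tilde{\mathcal{M}}$ carries the spin structure $\tilde\vep$ obtained from $\vep$ by the $\mathbb{Z}_2$-twist. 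The upshot is that the general $(\tau,\ell)$-invariants of $M$ are nonnegative-integer combinations of the \emph{basic} relative eta invariants, but taken with respect to \emph{both} spin structures $\vep$ and $\tilde\vep$. Theorem~\ref{twisted etas mod Z}, being stated for an arbitrary spin structure, then kills all of them. Your proof never performs this reduction, never mentions the auxiliary spin structure $\tilde\vep$, and never uses the flatness of the spin bundle -- which is precisely what makes the argument work for $\mathbb{Z}_p$-manifolds and not for a general spin manifold with a $\mathbb{Z}_p$-structure. Until you either prove your stronger detection claim or carry out this reduction, the proof is incomplete.
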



It is worth putting these groups into a bit of a historical context. The equivariant spin bordism groups are
important in algebraic topology as they are closely related to Brown-Peterson homology. In \cite{BBDG} the eta invariant was used to compute $BP_*(BG)$ where $G$ was a spherical space form group; this computation yields the additive structure of $\RMSpin_*(B\mathbb{Z}_p)$ for $p$ an odd prime. But in addition to their topological importance, these groups have also appeared in a geometric setting, for instance, in connection with spin manifolds with finite fundamental group admitting a metric of positive scalar curvature (see Remark \ref{peters-remark}).

\goodbreak

\msk
A brief outline of the paper is as follows. In Section 2 we start by giving a somewhat detailed description of the structure of $\Z_p$-manifolds and of their spin structures. Sections 3 through 5 are devoted to the proofs of the main results. In Sections 3 we study the spectrum and the eta series, in Section 4 we give the results concerning eta invariants and in Section 5 we settle the result on spin bordism. In these proofs we use a number of auxiliary formulas, stated and proved in Section 6. Namely, we need formulas for trigonometric products (\S \ref{sect-a-STP}), for twisted character Gauss sums (\S \ref{Sect-A-TCGS}) and sums involving Legendre symbols (\S \ref{Sect-a-SILS}). We have presented this material at the end to avoid interrupting the flow of our discussion of the main results.

\section{$\Z_p$-manifolds} \label{sect-2}
\subsection{Compact flat manifolds}\label{S2-CFM}
Any compact flat $n$-manifold is isometric to a quotient of the form
$$M_\G = \G\backslash\R^n$$
where $\G$ is a Bieberbach group, that is, a discrete, cocompact, torsion-free subgroup $\G$ of $\text{I}(\R^n)$, the isometry group of $\R^n$. Thus, one has that any element $\g \in \text{I}(\R^n) \simeq \on \rtimes \R^n$ decomposes uniquely as $\g = B L_b$, where $B \in \on$, $L_b$ denotes translation by $b\in \R^n$, and furthermore, multiplication is given by
\begin{equation}\label{semidirectproduct}
BL_b \cdot CL_c = BCL_{C^{-1}b+c} \,.
\end{equation}

The pure translations in $\G$ form a normal, maximal abelian subgroup of finite index, $L_\Ld$, $\Ld$ a lattice in $\R^n$ that is $B$-stable for each $BL_b \in \G$. The restriction to $\G$ of the canonical projection
$\text{I}(\R^n) \rightarrow \on$ given by $BL_b\mapsto B$ is a homomorphism with kernel $L_{\Ld}$ and its image $F$ is a finite subgroup of $\on$. Thus, we have an  exact sequence of groups
\begin{equation}\label{exseq}
0 \rightarrow \Lambda \rightarrow \G \rightarrow F \rightarrow 1.
\end{equation}
The group $F \simeq \Ld \backslash \G$ is called the {\em holonomy group} of $\G$.
The action by conjugation $BL_\ld B^{-1}=L_{B\ld}$ of $\Ld \backslash \G$ on $\Ld$ defines a representation $F \rightarrow \mathrm{GL}_n(\Z)$ called the {\em integral holonomy representation}, or, for short, the {\em holonomy representation}. In general, the integral holonomy representation is far from determining a flat manifold uniquely.

We note that in any compact flat $n$-manifold, we have that
\begin{equation*}\label{nB}
n_B := \dim \ker (B - \I_n) = \dim \, (\R^n)^B \ge 1
\end{equation*}
for every $BL_b \in \G$ (see for instance \cite{MR09}) and that $M_\G$ is orientable if and only if $F\subset \son$.

\subsection{$\Z_p$-manifolds}\label{S2-ZPM}
A \emph{$\Z_p$-manifold} is a compact flat manifold $M_\G = \G \backslash \R^n$ with holonomy group $F\simeq \Z_p$. Hence $\G = \langle BL_b, \Ld \rangle$ is torsion-free, with $B\in \on$ of order $p$ and $b\in \R^n \smallsetminus \Lambda$.

\sk
By \eqref{exseq}, $M_\G$ can be thought to be constructed from a $\Z_p$-action on $\Lambda$.
Thus, as a $\Z_p$-module, $\Ld$ is of the form given by Reiner in \cite{Re}, i.e.
$\Ld$ is isomorphic to
\begin{equation}\label{Reiner}
\Ld(a,b,c,\mathfrak a) := \mathfrak{a} \oplus (a-1)\, \mathcal{O} \oplus b \,\Z[\Z_p] \oplus c \, \I,
\end{equation}
where $a,b,c$ are non negative integers satisfying $a+b>0$ and
\begin{equation*}\label{dimension}
n=a(p-1)+bp+c,
\end{equation*}
$\xi$ is a primitive $p^{\mathrm{th}}$-root of unity, $\mathcal{O} = \Z[\xi]$ is the full ring of algebraic integers in the cyclotomic field $\Q(\xi)$ and
$\mathfrak{a}$ is an ideal in $\mathcal{O}$. Also, $\Z[\Z_p]$ denotes the group ring over ${\mathbb Z}$, and $\I \simeq
\Z$ stands for the trivial $\Z_p$-module.

\msk The $\Z_p$-actions on the modules $\mathcal{O}$, $\mathfrak{a}$ and $\Z[\Z_p]$ are given by multiplication by $\xi$.
In the bases $1,\xi,\ldots,\xi^{p-2}$ of $\mathcal{O}$ and $1,\xi,\ldots,\xi^{p-1}$ of $\Z[\Z_p]$, the actions of the generator are given, in matrix form,  respectively by
{\small
\begin{equation}\label{blocks}
C_p= \left( \begin{smallmatrix}
  0  &        &         &        &  -1     \\
  1  &  0     &         &        &  -1     \\
     &  1     &         &        &  -1     \\
     &        &  \ddots &        & \vdots  \\
     &        &         &    0   &  -1     \\
     &        &         &    1   &  -1
\end{smallmatrix} \right)  \in \mathrm{GL}_{p-1}(\Z),
\qquad
J_p= \left( \begin{smallmatrix}
         0  &        &         &            &  1  \\
         1  &  0     &         &            &  0  \\
            &  1     &         &            &  0   \\
            &        & \ddots  &            &  \vdots   \\
            &        &         &          0 &  0  \\
            &        &         &          1 &  0
\end{smallmatrix} \right) \in \mathrm{GL}_{p}(\Z).
\end{equation}}
If $\mathfrak{a} = \mathcal{O}\alpha$ is principal, we may use the $\Z$-basis $\alpha,\xi\alpha,\ldots,\xi^{p-2}\alpha$ of $\mathfrak{a}$ and the action of the generator is again described by the matrix $C_p$. For a general ideal this action is given by a more complicated integral matrix that we shall denote by $C_{p, \mathfrak{a}}$. We note that $C_{p, \mathfrak{a}}^p = \I$, the action has no fixed points, and the eigenvalues of $C_{p, \mathfrak{a}}$ are again all primitive $p^{\mathrm{th}}$-roots of 1. In particular, $C_{p, \mathfrak{a}}$  is conjugate to $C_p$ in $\mathrm{GL}(n, \Q)$.

Note that $J_p \in \mathrm{SO}(p)$ but $C_p \in \mathrm{SL}_{p-1}(\Z) \smallsetminus  \mathrm{O}(p-1),$ and furthermore, $n_{J_p}=1$, $n_{C_p}=n_{C_{p,a}}=0$. Since $\det \,C_p = \det \, J_p =1$ we have $F\subset \mathrm{SO}(n)$, and hence $M_\G$ is orientable.

\msk Using \eqref{Reiner}, Charlap was able to give a full classification of flat $\Z_p$-manifolds up to affine equivalence classes in \cite{Ch65} (see also \cite{Ch88}). He distinguished between two cases, that he called exceptional and non-exceptional manifolds.

\begin{definition}[\cite{Ch88}]
A $\Z_p$-manifold $M$ is called \textit{exceptional} if the lattice of translation is, as a $\Z_p$-module, isomorphic to $\Lambda(a,0,1,\mathfrak a)$ for some ideal $\mathfrak a$ in $\mathcal{O}=\Z[\xi]$; that is, if $(b,c)=(0,1)$. Otherwise, $M$ is called \textit{non-exceptional}.
\end{definition}

The following proposition collects several standard facts on the structure of $\Z_p$-manifolds. We include a sketch of the proof to make the paper more self-contained.

\begin{proposition}\label{propZp}
Let $M_\G = \G \backslash \R^n$ be a $\Z_p$-manifold with $\Gamma = \langle \g ,\, \Lambda \rangle$,
where $\g = BL_b$.

\sk (i) $(BL_b)^p= L_{b_p}$ where $b_p =  \sum_{j=0}^{p-1} B^j b \in L_\Ld \smallsetminus (\sum_{j=0}^{p-1} B^j ) \Lambda$.

\sk (ii)
As a $\Z_p$-module, $\Lambda\simeq \Ld(a,b,c,\mathfrak a)$ as in \eqref{Reiner} with $c\ge 1$ and $a,b,c$ uniquely determined by the isomorphism class of $\G$.

\sk (iii)
$\Gamma$ is  conjugate in $\mathrm{I}(\R^n)$ to a Bieberbach group $\tilde\Gamma = \langle \tilde\g ,\, \Lambda \rangle$
where
$\tilde\g=BL_{\tilde b}$ for which one further has that  $B\tilde b = \tilde b$ and $\tilde b
\in \frac 1p \Lambda \smallsetminus  \Ld$.

\sk (iv) One has that
$$H_1(M_\G,\Z) \simeq \Z_p^a \oplus \Z^{b+c}, \qquad H^1(M_\G,\Z) \simeq \Z^{b+c},$$ and hence $n_B = b+c = \beta_1$, where $\beta_1$ is the first Betti number of $M_\G$.

\sk (v) We have that $n_B = 1 \: \Leftrightarrow \: (b,c)=(0,1)$. In this case, there is a $\Z$-basis $f_1, \dots, f_n$ of $\Lambda$ such that $\Lambda_0 = \Z f_n$ and $\langle f_j, f_n \rangle=0$ for any $1\le j \le n-1$. Furthermore, the element $\g = BL_b$ as above can be chosen so that $b = \frac 1p f_n$.

\end{proposition}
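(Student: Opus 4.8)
The plan is to prove the five assertions in order, each drawing on the previous ones, with Reiner's classification \cite{Re} and the torsion-freeness of $\Gamma$ as the only real inputs. Parts (i)--(iii) are direct computations with \eqref{semidirectproduct}. For (i), an induction gives $(BL_b)^k=B^kL_{\sum_{j=0}^{k-1}B^{-j}b}$, so at $k=p$ (where $B^p=\I_n$) we get $(BL_b)^p=L_{b_p}$ with $b_p=\sum_{j=0}^{p-1}B^{-j}b=\sum_{j=0}^{p-1}B^jb$; as a pure translation in $\Gamma$ it lies in $L_\Lambda$, and $b_p\neq 0$ since $BL_b$ has no torsion. If $b_p=N\lambda$ with $N:=\sum_{j=0}^{p-1}B^j$ and $\lambda\in\Lambda$, then $BL_{b-\lambda}=BL_bL_{-\lambda}\in\Gamma$ has trivial $p$-th power, which is impossible; hence $b_p\notin N\Lambda$. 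For (ii), \eqref{Reiner} is Reiner's description of $\Lambda$ as a $\Z_p$-lattice, with $a,b,c$ invariants of $\Lambda$ hence of $\Gamma$, and $a+b>0$ because otherwise $\Lambda=c\,\I$ is trivial and $B=\I_n$; for $c\geq 1$ note that $Bb_p=b_p$, so $b_p$ is a nonzero $B$-fixed vector of $\Lambda$, while a short computation from \eqref{Reiner} shows $N$ kills the $\mathcal O$- and $\mathfrak a$-summands and $N\,\Z[\Z_p]=\Z[\Z_p]^B$, so that $c=0$ would force $b_p\in N\Lambda$, against (i). For (iii), use the eigenspace splitting $\R^n=(\R^n)^B\oplus\operatorname{Im}(\I_n-B)$ of the finite-order map $B$: writing $b=b_1+b_2$ accordingly and solving $(\I_n-B^{-1})v=b_2$ on $\operatorname{Im}(\I_n-B)$, conjugation by $L_v$ fixes $\Lambda$ and replaces $b$ by $\tilde b:=b-(\I_n-B^{-1})v=b_1\in(\R^n)^B$; then $p\tilde b\in\Lambda$ by (i), i.e.\ $\tilde b\in\tfrac1p\Lambda$, and $\tilde b\notin\Lambda$ since $BL_{\tilde b}L_{-\tilde b}=B$ would otherwise be a torsion element.

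For (iv) I would first establish $[\Gamma,\Gamma]=(\I_n-B)\Lambda$: conjugation by $\gamma$ sends $L_\lambda$ to $L_{B\lambda}$, so $[\gamma,L_\lambda]=L_{(B-\I_n)\lambda}$ gives $\supseteq$, while the quotient $\Gamma/L_{(\I_n-B)\Lambda}$ is abelian and gives $\subseteq$. Hence $\Gamma^{\mathrm{ab}}$ is generated by the image of $\bar\Lambda:=\Lambda/(\I_n-B)\Lambda$ together with the class $t$ of $\gamma$, subject only to $pt=\overline{b_p}$, so $\Gamma^{\mathrm{ab}}\cong(\Z t\oplus\bar\Lambda)/\langle pt-\overline{b_p}\rangle$. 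A computation from \eqref{Reiner} yields $\bar\Lambda\cong\Z_p^a\oplus\Z^{b+c}$ (using $\mathfrak a/(\xi-1)\mathfrak a\cong\mathcal O/(\xi-1)\mathcal O\cong\Z_p$, since $\mathfrak a$ is an invertible $\mathcal O$-ideal and $(\xi-1)$ the prime over $p$, and the augmentation map on the $\Z[\Z_p]$-summands), and $\overline{b_p}$ lies in the free part $\Z^{b+c}$ because $b_p=Nb$ has no $\mathcal O$- or $\mathfrak a$-component. The one delicate point, which I regard as the main obstacle, is to show $\overline{b_p}\notin p\bar\Lambda$; granting this, $(p,-\overline{b_p})$ is primitive modulo the torsion of $\Z\oplus\bar\Lambda$, whence $\Gamma^{\mathrm{ab}}\cong\Z_p^a\oplus\Z^{b+c}$, then $H^1(M_\Gamma,\Z)=\operatorname{Hom}(H_1(M_\Gamma,\Z),\Z)\cong\Z^{b+c}$, and finally $n_B=\dim(\R^n)^B=b+c$ by counting trivial $\R$-summands in \eqref{Reiner}, so $n_B=\beta_1=b+c$.

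To handle the obstacle I would invoke (iii): take $\gamma=BL_{\tilde b}$ with $\tilde b\in(\R^n)^B$, so $b_p=p\tilde b$. The $\Z[\Z_p]$-part of $\overline{b_p}$ is automatically in $p\Z$ (the $B$-fixed line of a $\Z[\Z_p]$-summand has augmentation $p$), so $\overline{b_p}\in p\bar\Lambda$ would force the $\I$-component of $b_p$, hence of $\tilde b$, to be integral. But subtracting that integral $\I$-component off $\tilde b$ produces $\gamma''=BL_{b''}\in\Gamma$ with $b''$ still $B$-fixed, $b''\notin\Lambda$, and $\langle\gamma'',\Lambda\rangle=\Gamma$, whose $p$-th power $L_{pb''}$ has its $\Z[\Z_p]$-part in $N\,\Z[\Z_p]=\Z[\Z_p]^B$ and all other parts zero, hence lies in $L_{N\Lambda}$ --- contradicting (i) applied to $\gamma''$. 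Thus the $\I$-component of $b_p$ is not in $p\Z$, so $\overline{b_p}\notin p\bar\Lambda$, and the argument of the previous paragraph goes through.

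Finally (v): by (iv) and $c\geq1$, $n_B=1\iff b+c=1\iff(b,c)=(0,1)$. In that case the invariant sublattice $\Lambda_0=\Lambda^B$ has rank $1$ and is primitive in $\Lambda$ (if $mv\in\Lambda^B$ then $Bv=v$), so $\Lambda_0=\Z f_n$ for some primitive $f_n$. The crucial point is that the orthogonal projection $\pi=\tfrac1pN$ onto $(\R^n)^B$ satisfies $\pi(\Lambda)=\Lambda_0$: from \eqref{Reiner} with $(b,c)=(0,1)$, $N$ kills the $\mathcal O$- and $\mathfrak a$-summands and acts as multiplication by $p$ on the single $\I$-summand, so $\pi(\Lambda)=\tfrac1pN\Lambda=\Lambda_0$. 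Hence $\Lambda=(\Lambda\cap f_n^{\perp})\oplus\Z f_n$ orthogonally, and any $\Z$-basis $f_1,\dots,f_{n-1}$ of $\Lambda\cap f_n^{\perp}$ completes $f_n$ to the asserted basis. For the last claim, (iii) gives $\gamma=BL_{\tilde b}$ with $\tilde b\in(\R^n)^B\cap(\tfrac1p\Lambda\smallsetminus\Lambda)=\tfrac1p\Z f_n\smallsetminus\Z f_n$, so $\tilde b=\tfrac kp f_n$ with $\gcd(k,p)=1$; replacing $\gamma$ by $\gamma^m$ with $mk\equiv1\pmod p$ --- whose rotation part $B^m$ is again a generator of the holonomy --- and correcting by a translation in $\Z f_n$ turns $\gamma$ into an element of the required form $BL_{f_n/p}$ that still generates $\Gamma$ over $\Lambda$.
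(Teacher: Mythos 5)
Your proposal is correct and follows the same overall strategy as the paper: direct computation with the semidirect product law for (i) and (iii), Reiner's classification for (ii), reduction to $\Gamma^{\mathrm{ab}} = \Gamma/L_{(B-\I)\Lambda}$ for (iv), and normalization via (iii) for (v). The one place where you depart from the paper --- and improve on it --- is (iv). The paper computes $\Lambda/(B-\I)\Lambda \cong \Z_p^a\oplus\Z^{b+c}$ and then asserts ``it is not hard to check'' that $\Gamma/[\Gamma,\Gamma]\cong\Z_p^a\oplus\Z^{b+c}$, silently passing over the fact that $\Gamma^{\mathrm{ab}}$ is an extension of $\Z_p$ by $\bar\Lambda$ via the relation $p\bar\gamma=\overline{b_p}$, and that this gives the stated answer \emph{only if} $\overline{b_p}$ is primitive in the free part of $\bar\Lambda$, i.e.\ $\overline{b_p}\notin p\bar\Lambda$. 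You correctly flag this as the delicate point and supply a clean proof: normalize $b=\tilde b\in(\R^n)^B$ by (iii), observe that the $\Z[\Z_p]$-contributions to $\overline{b_p}$ are automatically in $p\Z$, and then show that an integral $\I$-component would let you subtract a translation in $\Lambda_0$ from $\tilde b$ to produce a new generator $\gamma''$ with $(\gamma'')^p\in L_{N\Lambda}$, contradicting (i). Similarly, in (ii) you give an actual argument for $c\ge 1$ ($b_p\in\Lambda^B$ would land in $N\Lambda$ if $c=0$, contradicting (i)), which the paper merely asserts. These are genuine gap-fillings; the rest (your $\pi=\tfrac1pN$ projection argument in (v), and the recovery of $\gcd$-manipulation to reach $b=\tfrac1pf_n$) is in substance the paper's argument phrased slightly differently.
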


\begin{proof}
(i) By repeatedly applying \eqref{semidirectproduct}, we get $(BL_b)^p= L_{\sum_{j=1}^{p} B^{-j}  b} \in \G$ and hence $b_p = \sum_{j=0}^{p-1} B^j b \in \Ld$. Furthermore, $b_p \not \in (\sum_{j=0}^{p-1} B^j) \Lambda$. In fact, if $b_p = \sum_{j=0}^{p-1} B^j \lambda$ with $\lambda \in \Lambda$, then we would have $(BL_p L_{-\lambda})^p = \I$, which contradicts the torsion freeness of $\G$.

\sk
(ii) By Charlap's classification \cite{Ch65} (see also \cite{Ch88}), the translation lattice $\Lambda$ must be one of the Reiner's $\Z_p$-module described in \eqref{Reiner}. Also, the torsion-free condition on $\G$ implies that $c\ge 1$. By (iv), $\G$ determines $a$ and $b+c$. Since $n = (a+b)(p-1)+ (b+c)$, the number $a+b$ is also determined and hence so are $b$ and $c$.

\sk
(iii)
If $BL_b \in \Gamma$ as in the statement, we have that $b = b_+ + b'$ where one has $Bb_+ = b_+$ and
$b'
\in
\ker (B-\I)^\perp$. Furthermore, $b_p'=0$ since $b_p'= (\sum_{j=0}^{p-1} B^j) b'$ lies in $\ker
(B-\I)^\perp \cap \ker (B-\I)$. Thus $(BL_b)^p = L_{p b_+}$ is a translation in $\Gamma$, hence
$pb_+ \in \Lambda$ and $b_+ \ne 0$.

\sk
If $v \in \R^n$, then  $L_v BL_b L_{-v} = BL_{b+(B^{-1} -\I)v}$. Now, we have
$\text{Im}(B^{-1} -\I)$ = $\ker (B^{-1}-\I)^\perp$ = $\ker (B-\I)^\perp$, so, one can choose  $v$  
so $(B^{-1} -\I)v = -b'$. In this way, conjugation of $\Gamma$ by $L_v$
changes $\Gamma$ into a Bieberbach group generated by $\tilde \g = BL_{b_+}$
and $\Lambda$, where $\tilde \g$ satisfies $Bb_+ = b_+$, $pb_+ \in \Lambda$ and
$b_+ \notin \Ld$, as desired.

\sk
(iv)
These groups are given in  \cite{Ch88}, pp.\@ 153, Exercise 7.1 (iv). For completeness, we  give a sketch of the proof by  explicit calculations. We note that the result for $H_1(M_\G,\Z)$ implies the one for $H^1(M_\G,\Z)$, by the universal coefficient theorem, and furthermore, the formula for $H^1(M_\G,\Z)$ in turn implies that $\beta_1 = b+c =n_B$.

\sk
Thus, it suffices to compute $H_1(M_\G,\Z) \simeq \G/[\G, \G]$. Since $[L_\ld,L_{\ld'}]=\I$ and $[\g,L_\ld] = B L_b L_\ld L_{-b} B^{-1} L_{-\ld} = L_{(B - \I)\ld}$, we have that
\begin{equation}\label{commutator}
[\G, \G] = \langle [\g,L_\ld] : \ld \in \Ld \rangle  =  L_{(B-\I)\Ld} \,.
\end{equation}

In order to compute $(B-\I)\Ld$  in our case we use  a basis of $\Ld$ such that the action of $B$ is represented by matrices as in \eqref{blocks}. We shall denote by $\Ld_ \mathcal{O}$ the sum of all submodules of $\Ld$ of type $\mathcal{O}$ or $\mathfrak{a}$, by $\Ld_R$ the sum of those of type  $\Z[\Z_p]$ and by $\Lambda_0$ the sum of the trivial submodules.

\sk
We first note that if $f_1, \dots, f_{p-1}$ is a $\Z$-basis of  a module $N$ of type $\mathcal{O}$ or $\mathfrak{a}$, then
a basis of $(B-\I)N$ is given by $f_2-f_1,\ldots,f_{p-1}-f_{p-2}, -\sum_{j=1}^{p-1} f_j - f_{p-1}$, or else
we can use the basis $f_2-f_1,f_3-f_2,\ldots,f_{p-1}-f_{p-2}, p f_{p-1}$. This implies that $N/(B-\I)N \simeq \Z_p$, hence $$\Ld_ \mathcal{O}/(B-\I)\Ld_ \mathcal{O} \simeq \Z_p^a.$$

Similarly,  if  $f'_1, \dots, f'_{p}$ is a $\Z$-basis of  a module $N'$ of type $\Z[\Z_p]$, then
a basis of $(B-\I)N'$ is given by $f'_2-f'_1,  \ldots , f'_{p-1}-f'_{p-2}, f'_p - f'_{p-1}, f_1' - f_{p}'$. This  implies that $N'/(B-\I)N'\simeq \Z$. Finally, for  a summand of trivial type, $N'' \simeq \Z$, we clearly have $(B-\I)N''=0$.
Thus $$\Ld_R \oplus \Ld_0 \, / \, (B-\I)\Ld_R \simeq \Z^{b+c}.$$

\sk

Now $(BL_b)^p= L_{pb_+}$ and $pb_+ \in \Ld$ is fixed by $B$, hence one has that
$pb_+
\in
\Ld^B = \Ld_R^B \oplus \Ld_0$, since
$(\Ld_ \mathcal{O})^B=0$ (a module of type $N$ has no $B$-fixed vectors).
For a module of type $N'$ we have that $(N')^B =\Z (\sum_{i=1}^p f'_j) \simeq \Z$,
by using a basis $f'_j,\, i\le j \le p,$ as above. Hence $\Ld_R^B \simeq \Z^b$.

Putting all this information together, by \eqref{commutator}, it is not hard to check that
$$\G/[\G, \G] \simeq \langle BL_b, L_ {\Ld_R^B \oplus \Ld_0} \rangle \, / \, (B-\I)\Ld_R\,\simeq \Z_p^a \oplus \Z^{b+c},$$
and hence the assertions in (iv) are proved.

\sk
(v) Since $n_B=b+c$ and $b\ge 0$, $c\ge 1$, it is clear that $n_B=1$ if and only if $(b,c)=(0,1)$.

\sk
Now, by (ii), we may assume that $b_+ = b$, after conjugation of $\Gamma$ by $L_v$ in $\mathrm{I}(\R^n)$ if necessary. By the description of the lattice in (ii), and since $(b,c) = (0,1)$, there is a $\Z$-basis $f_1, \dots, f_n$ of $\Lambda$ such that $\Lambda_0 = \Z f_n$ and $pb = af_n$ with $a \in \Z$, and where
$(p,a)=1$, since $b \notin \Lambda$. Now, if $s,t \in\Z$ are such that $sa+tp=1$, then  $spb = sa f_n = f_n - tp f_n$, so $sb = \frac {f_n}p - t f_n$. Hence, since $tf_n \in \Lambda$, we may change the generator $\gamma$, of $\G$ mod $\Ld$, by $\tilde \g := (BL_b)^s L_{t f_n}= B^s L_{\f 1p f_n}$, which has the asserted properties. Finally, we note that since $B$ has no fixed points on $(\Ld_0)^\perp$ then $\langle f_j, f_n \rangle=0$ for any $1\le j \le n-1$.

This completes the proof.
\end{proof}

\begin{remark} \label{cfm dim3}
The compact flat manifolds are classified, up to affine equivalence, only in low dimensions $n\le 6$ (\cite{HW} $n\le 3$, \cite{BBNWZ} $n=4$ and \cite{CZ} $n=5,6$). In dimension 3 there are 10 compact flat manifolds, half of them having cyclic holonomy groups $F\simeq \Z_2,\Z_3,\Z_4, \Z_6$ (see \cite{Wo}). These manifolds were described in \cite{CR}, where they were called \textit{platycosms}. Out of these, there is only one $\Z_3$-manifold, the \textit{tricosm}. That is, there is only one 3-dimensional $\Z_p$-manifold with $p$ an odd prime. It is denoted by $\mathcal{G}_3$ in \cite{Wo} and by $c3$ in \cite{CR}.
\end{remark}

\subsection{The models $\zp$}
\label{zpabc}
We shall now  give some explicit models of $\Z_p$-manifolds. In particular, we will show that for any triple, $a,b,c\in \N_0$, $c\ge 1$ and any ideal $\mathfrak{a}$ in $\mathcal {O}$, one can construct a compact flat manifold with  translation lattice $\Ld$ such that, as a $\Z_p$-module it satisfies \eqref{Reiner}. For $a,b,c \in \N_0$,  $c\ge 1$, and any ideal $\mathfrak{a}$, let $C_p, J_p$, and $C_{p, \mathfrak{a}}$ be as in the previous subsection and define matrices $C, C' \in \GL_n(\Z)$ of the form
\begin{equation}\label{matrices C}
\begin{split}
& C = \text{diag}(\underbrace{C_p,\ldots,C_p}_{a},\underbrace{J_p,\ldots,J_p}_{b},\underbrace{1,\ldots,1}_{c})\,,
\\
& C' = \text{diag}(C_{p, \mathfrak{a}}, \underbrace{C_p,\ldots,C_p}_{a-1},  \underbrace{J_p,\ldots,J_p}_{b},
\underbrace{1,\ldots,1}_{c})\,,
\end{split}
\end{equation}
where $\mathfrak{a}$ is not principal.

Note that, actually, $C$ is just a particular case of $C'$ when $\mathfrak{a}= \mathcal{O}$ (or when $\mathfrak{a}$ is principal) and $C'$ depends on $a,b,c$ and $\mathfrak{a}$, though this is not apparent in the notation. Although  $C, C' \not \in \mathrm{O}(n)$, they can be conjugated into  $\text{I}(\R^n)$. Indeed, the eigenvalues of
$C_p$ and $C_{p, \mathfrak{a}}$ are exactly the primitive $p^{\mathrm{th}}$-roots of unity and the eigenvalues of $J_p$ are all $p^{\mathrm{th}}$-roots of unity. Thus, if $\sim$ means conjugation in $\GL_n(\R)$, then $C_p \sim B_p$,  $C_{p, \mathfrak{a}} \sim B_p$ and
$J_p \sim (\begin{smallmatrix} B_p & \\ & 1 \end{smallmatrix})$, where
\begin{equation}\label{Bp-matrix}
B_p := \text{diag}\big( B(\tf{2\pi}{p}), B(\tf{2\cdot 2\pi}{p}), \ldots,
B(\tf{2q\pi}{p})\big),
\end{equation}
with $q=[\f{p-1}2]$ and $B(t) = \left( \begin{smallmatrix} \cos t & -\sin t
\\ \sin t & \cos t \end{smallmatrix}\right)$, $t \in \R$.
That is, there exists a matrix $X_{\mathfrak a} \in \GL_{n-c}(\R) \subset \GL_n(\R)$ such that
$X_{\mathfrak a} C' X^{-1}_{\mathfrak a} = B \in \text{SO}(n-c) \subset \text{SO}(n)$, where
\begin{equation}\label{diag Bp-matrix}
B = \text{diag}(\underbrace{B_p,\ldots,B_p}_{a+b},\underbrace{1,\ldots,1}_{b+c}).
\end{equation}

We now define a lattice in $\R^n$ by
$$\Ld_{p,a}^{b,c}(\mathfrak{a}) := X_{\mathfrak a} \Z^{n-c} \stackrel{\perp}{\oplus} \Z^{c} = X_{\mathfrak{a}} L_{\Z^n} X_{\mathfrak{a}}^{-1}.$$
Thus, as a $\Z_p$-module, $\Ld_{p,a}^{b,c}(\mathfrak{a})$ decomposes as in \eqref{Reiner},
with $c\I$ orthonormal to its complement $\mathfrak{a} \oplus (a-1)\mathcal{O}\oplus  b\Z[\Z_p]$. With these ingredients, we define an $n$-dimensional Bieberbach group:
\begin{equation*} \label{Gpa}
\G_{p,a}^{b,c} (\mathfrak a) :=  \langle BL_{\tf{e_n}{p}}, \Ld_{p,a}^{b,c}(\mathfrak{a}) \rangle,
\end{equation*}
where $e_n$ is the canonical vector, and the corresponding flat Riemannian $n$-manifold
\begin{equation} \label{mzp}
M_{p,a}^{b,c}  (\mathfrak a):= \G_{p,a}^{b,c}  (\mathfrak a) \backslash \R^n.
\end{equation}

\begin{remark}\label{essential}
As we shall see, in the study of eta invariants 
of $\Z_p$-manifolds it will essentially suffice to look at exceptional $\Z_p$-manifolds, i.e.\@ those with $\beta_1 =1$. Proposition \ref{propZp} (vi) says that any  exceptional $\Z_p$-manifold $M$ is diffeomorphic to some $M_{p,a}^{0,1}(\mathfrak a)$ as in \eqref{mzp}, i.e. having $b = \tf1p e_n$.
\end{remark}

As mentioned in the Introduction, the integrality result in Theorem \ref{twisted etas mod Z} will be proved to hold for every $\Z_p$-manifold except for a single one, the so called tricosm. We now give a description of this manifold.

\begin{example}\label{tricosm}
In our previous description, the tricosm $c3$ (see Remark \ref{cfm dim3}) corresponds to $M_{3,1} = M_{3,1}^{0,1}(\mathcal{O})$, with $\mathcal{O} = \Z[\f{2\pi i}{3}]$.
So, as a $\Z_3$-module, we have $\Lambda = \Z[e^{\f{2\pi i}{3}}] \oplus \Z$ and the integral representation of $F\simeq \Z_3$ is given by the matrix
$\tilde C_3 = \mathrm{diag}(C_3,1) =
\left( \begin{smallmatrix} 0 & -1 & \\ 1 & -1 & \\ & & 1 \end{smallmatrix} \right)$. Then
$M_{3,1} = \langle BL_{\f{e_3}{3}}, L_{f_1}, L_{f_2}, L_{e_3} \rangle \backslash \R^3$ where $f_1,f_2,e_3$ is a $\Z$-basis of $\Lambda_{3,1} = X\Z^2 \oplus \Z$, $B = \left( \begin{smallmatrix} -1/2 & -\sqrt 3/2 & \\ \sqrt 3/2 & -1/2 & \\ & & 1 \end{smallmatrix} \right) \in \mathrm{SO}(3)$ and $X\in \mathrm{GL}_3(\R)$ is such that $X \tilde C_3 X^{-1} = B$.
\end{example}

\subsection{Spin group and spin representations}\label{S2-SGSR}
Let $Cl(n)$ denote the Clifford algebra of $\R^n$ with respect to the standard inner product. Inside the group of units of $Cl(n)$ there is the spin group, $\text{Spin}(n)$, which is a compact, simply connected Lie group
if $n\geq 3$. There is a canonical epimorphism
\begin{equation}\label{picovering}
\pi:\text{Spin}(n)\rightarrow \text{SO}(n)
\end{equation}
given by $\pi(v)(x) = vxv^{-1}$, with kernel $\{\pm 1\}$.
A maximal torus of $\text{Spin}(n)$ has the form
$T=\left\{x(t_1,\ldots,t_m) \,:\, t_j \in \R, \, 1\le j\le m \right\}$
where $m = [\frac n2]$,
\begin{equation*}\label{xelements}
x(t_1,\dots,t_m) := \prod_{j=1}^m (\cos t_j + \sin t_j \, e_{2j-1}e_{2j})
\in \text{Spin}(n)
\end{equation*}
and $e_1,\ldots,e_n$ is the canonical basis of $\R^n$. For convenience, if $a\in \N$, we shall use the notation
\begin{equation} \label{notacionxj}
x_{a}(t_1,t_2,\ldots,t_q) := x(\underbrace{t_1,t_2,\ldots,t_q}_{1}, \ldots,
\underbrace{t_1,t_2,\ldots,t_q}_{a}) \,.
\end{equation}

Set $x_0(t_1,\ldots,t_m):= \mathrm{diag}(B(t_1),\ldots, B(t_m),1)$ if
$n=2m+1$ and omit the 1 if $n=2m$. A maximal torus in $\text{SO}(n)$ is 
$T_0=\left\{x_0(t_1,\dots,t_m) \;:\; t_i \in \R \right\}$.
The restriction map $\pi:{T}\rightarrow T_0$ is a 2-fold covering and
\begin{equation}\label{2 fold}
\pi(x(t_1,\dots,t_m)) = x_0(2t_1,\ldots,2t_m) \,.
\end{equation}

Let $(L_n,\s_n)$ denote the {\em spin representation} of $\spin$, which is
the restriction to $\spin$ of any irreducible representation of the complex
Clifford algebra $Cl(n)\otimes \C$. It has complex dimension $2^m$ with
$m=[\tfrac n2]$. If $n$ is odd,
$(L_n,\s_n)$ is irreducible while, if $n$ is even, $\s_n = \text{S}_n^+
\oplus \text{S}_n^-$ where each $\s_n^\pm$ is irreducible of
dimension $2^{m-1}$. The representations  $L_n^\pm := {L_n}_{|S_n^{\pm}}$
are called the {\em half-spin representations}. It is known
that the values of the characters of $L_n$ and $L^\pm_n$ on the torus $T$
are given by (see \cite{MiPo06}, Lemma 6.1)
\begin{equation}\label{spinpmchars}
\begin{split}
& \chi_{_{L_n}} (x(t_1,\dots,t_m)) = 2^{m} \prod_{j=1}^m \cos t_j, \\
& \chi_{_{L^\pm_n}} (x(t_1,\dots,t_m))= 2^{m-1}\Big( \prod_{j=1}^m \cos t_j
\pm i^m \prod_{j=1}^m \sin t_j \Big).
\end{split}
\end{equation}

\subsection{Spin structures}\label{S2-SpSt}
It is a well known fact that spin structures on a compact flat spin manifold $M_\G$ are in a 1--1 
correspondence with group homomorphisms
\begin{equation}\label{spindiagram}
\vep:\G\arr \spin \qquad  \text{ such that } \qquad \pi (\vep(\gamma)) = B,
\end{equation}
for any $\g=BL_b \in \G$ (see \cite{Fr97}, \cite{LM} or \cite{Pf}), where $\pi$ is as in (\ref{picovering}).

\sk
Any morphism $\vep$ as in \eqref{spindiagram} is determined by 
the generators of $\G$. Let $M_\G$ be a $\Z_p$-manifold with $\G = \langle \g, L_\Ld \rangle$ and
let $f_1,\ldots,f_n$ be a $\Z$-basis of $\Lambda$.
Since  $r(\Ld)=\I$, we have $\vep(\Ld) \in \{\pm 1\}$, and hence $\vep$ is determined by $\vep(\g)$ and
\begin{equation*}\label{delta_j}
\delta_j := \vep(L_{f_j}) \in \{\pm 1\}, \qquad j=1,\ldots,n.
\end{equation*}

Since every $F$-manifold with $|F|$ odd is spin (\cite{Va}, Corollary 1.3), the $\Z_p$-manifolds considered are spin.
The determination of such structures for $\Z_p$-manifolds were previously considered in some particular cases (see \cite{MiPo06} and \cite{MiPo09} for the exceptional manifolds $M_{p,a}^{0,1}(\mathfrak a)$, and in \cite{SS} in the case of $M_{p,1}^{0,1}$ and $p$ any odd integer, not necessarily prime).
In fact, it is known that the exceptional manifolds $M_{p,a}^{0,1}(\mathfrak a)$ admit only two spin structures, $\vep_1, \vep_2$, one of which,  $\vep_1$, is of trivial type (\cite{MiPo09}). They are given, for $h=1,2$, by
\begin{equation} \label{eq.spinstructuresX}
\begin{split}
& \vep_h(L_{f_1}) = \cdots = \vep_h(L_{f_{n-1}}) = 1, \qquad \vep_h(L_{f_n}) = (-1)^{h+1},
\msk \\
& \vep_h(\g) = (-1)^{a[\f{q+1}{2}] + h + 1} \; x_a\big(\tf{\pi}p,\tf{2\pi}p,\dots, \tf{q\pi}p \big),
\end{split}
\end{equation}
in the notation of \eqref{notacionxj}.

\sk
Although in the sequel we will not need the explicit description of the spin structures of an arbitrary $\Z_p$-manifold, we will now give it for completeness. To this end, we will use the following abuse of notation
\begin{equation}\label{notation}
\vep_{|\Ld} = (\vep(L_{f_1}),\ldots,\vep(L_{f_n})).
\end{equation}

\begin{proposition}\label{spinstructs}
Let $p$ be an odd prime and let $M$ be a $\Z_p$-manifold with lattice of translations $\Ld \simeq \Ld(a,b,c,\mathfrak a)$. Then $M$ admits exactly $2^{b+c}=2^{\beta_1}$ spin structures, only one of which is of trivial type.

If, in particular, $M=\zp$ then the spin structures are explicitly given by
\begin{equation}\label{eq.spinstructures}
\begin{split}
& {\vep}_{|\Ld} = 
\big( \underbrace{1,\ldots,1}_{a(p-1)}, \underbrace{\delta_1, \ldots,\delta_1}_p,
\ldots, \underbrace{\delta_b,\ldots,\delta_b}_p, \delta_{b+1},\ldots,\delta_{b+c-1},
(-1)^{h+1} \big) \msk \\
& \vep(\g) 
= (-1)^{(a+b)[\f{q+1}{2}] + h + 1}
\; x_{a+b}\big(\tf{\pi}p, \tf{2\pi}p,\dots, \tf{q\pi}p \big),
\end{split}
\end{equation}
in the notations of \eqref{notation}, \eqref{delta_j} and \eqref{notacionxj}.
\end{proposition}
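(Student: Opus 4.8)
The plan is to use the standard dictionary of \eqref{spindiagram}: spin structures on $M_\G$ are the group homomorphisms $\vep\colon\G\to\spin$ with $\pi\circ\vep=r$, where $r(BL_b)=B$. Writing $\G=\langle\g,L_\Ld\rangle$ with $\g=BL_b$ and choosing a $\Z$-basis $f_1,\dots,f_n$ of $\Ld$, a presentation of $\G$ is: the $L_{f_j}$ commute, $\g L_{f_j}\g^{-1}=L_{Bf_j}$, and $\g^p=L_{b_p}$ with $b_p=\sum_{j=0}^{p-1}B^jb\in\Ld$ as in Proposition \ref{propZp}(i). Since $r(\Ld)=\I$ and $\ker\pi=\{\pm1\}$, such a $\vep$ amounts to the data of a homomorphism $\delta\colon\Ld\to\{\pm1\}$, $\delta(f_j)=\vep(L_{f_j})$, together with an element $u=\vep(\g)$ in the two-element set $\pi^{-1}(B)$. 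The relations translate into exactly two conditions: (a) $\delta$ kills $(B-\I)\Ld$ — this is the conjugation relation, using that the scalars $\vep(L_{f_j})$ are central in $\spin$ — so $\delta$ factors through $\Ld/(B-\I)\Ld\cong H_1(M_\G,\Z)\cong\Z_p^a\oplus\Z^{b+c}$ by Proposition \ref{propZp}(iv); and (b) $u^p=\vep(L_{b_p})=\delta(b_p)\in\{\pm1\}$.

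For the count I would argue: since $p$ is odd, $\operatorname{Hom}(\Z_p,\Z_2)=0$, hence $\operatorname{Hom}(\Z_p^a\oplus\Z^{b+c},\Z_2)\cong\Z_2^{b+c}$, giving exactly $2^{b+c}$ admissible $\delta$. Fixing $\delta$, the two lifts of $B$ are $u$ and $-u$ for some $u$, and $(-u)^p=-u^p$ since $p$ is odd, while $u^p\in\{\pm1\}$; so exactly one of the two lifts satisfies (b). Therefore $M_\G$ has precisely $2^{b+c}=2^{\beta_1}$ spin structures, the last equality again by Proposition \ref{propZp}(iv). The spin structure of trivial type is the one with $\delta\equiv1$, which is always admissible and, once chosen, forces a unique $u$; conversely any trivial-type $\vep$ has $\delta\equiv1$ and hence equals it. So there is exactly one of trivial type.

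For the explicit formula when $M=\zp$, I would work in the $\Z$-basis of $\Ld$ in which $B$ acts by the integral matrix $C'$ of \eqref{matrices C}, adapted to the Reiner decomposition \eqref{Reiner}. On each of the $a$ summands $N$ of type $\mathcal O$ or $\mathfrak a$ one has $N/(B-\I)N\cong\Z_p$ (as in the proof of Proposition \ref{propZp}(iv), using $N(\xi-1)=p$ for the ideal case), so $\delta\equiv1$ there — the first $a(p-1)$ entries of $\vep_{|\Ld}$. On each of the $b$ summands $N'$ of type $\Z[\Z_p]$, $B$ acts by the cyclic permutation $J_p$, and $N'/(B-\I)N'\cong\Z$ with every one of the $p$ basis vectors mapping to the generator, so $\delta$ is constant $=\delta_i$ on the $i$-th such block. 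On the $c$ trivial summands $\delta$ is unconstrained, giving $\delta_{b+1},\dots,\delta_{b+c}$; writing the last one as $(-1)^{h+1}$, $h\in\{1,2\}$, yields the displayed form of $\vep_{|\Ld}$. Finally, since $\g=BL_{e_n/p}$ and $Be_n=e_n$ (the last coordinate spans a trivial summand, $c\ge1$), one gets $b_p=\sum_{j=0}^{p-1}B^j(e_n/p)=e_n$, so condition (b) reads $\vep(\g)^p=\vep(L_{e_n})=(-1)^{h+1}$. Using \eqref{2 fold} and \eqref{diag Bp-matrix}, the element $u_0=x_{a+b}\big(\tf\pi p,\tf{2\pi}p,\dots,\tf{q\pi}p\big)$ satisfies $\pi(u_0)=B$; since each factor $\cos t+\sin t\,e_{2j-1}e_{2j}$ squares like $e^{2it}$ (because $(e_{2j-1}e_{2j})^2=-1$ in either Clifford sign convention), one gets $u_0^p=x_{a+b}(\pi,2\pi,\dots,q\pi)=(-1)^{(a+b)q(q+1)/2}=(-1)^{(a+b)[(q+1)/2]}$, the last step an elementary parity check. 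Choosing the sign of $u=\pm u_0$ to match $(-1)^{h+1}$ then gives $\vep(\g)=(-1)^{(a+b)[(q+1)/2]+h+1}\,x_{a+b}\big(\tf\pi p,\dots,\tf{q\pi}p\big)$, which is \eqref{eq.spinstructures}.

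The abstract count in the second paragraph is routine; the main obstacle is the bookkeeping of the third paragraph — identifying the correct lift $u_0$ of $B$ inside $\spin$, tracking how the rotation blocks of the $B_p$'s and the $b+c$ fixed directions are distributed among the arguments of the standard torus element $x(t_1,\dots,t_m)$ (including the parity of $n$), and carrying out the $p$-th power so that the sign exponent emerges exactly as $(a+b)[(q+1)/2]+h+1$.
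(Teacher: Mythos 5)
Your proof is correct and follows essentially the same route as the paper's: the same two conditions on $(\vep|_\Lambda,\vep(\g))$ (the paper's \textsf{C1}, \textsf{C2}), the same Reiner-block analysis to pin down $\vep|_\Lambda$ blockwise, and the same $p$-th power computation in the torus for the sign of $\vep(\g)$. The one genuine streamlining is your count of admissible $\delta$'s via $\operatorname{Hom}(\Lambda/(B-\I)\Lambda,\Z_2)\cong\Z_2^{b+c}$ followed by the observation that, for each $\delta$, oddness of $p$ forces exactly one of the two lifts of $B$ to satisfy $u^p=\delta(b_p)$; the paper instead derives the same count by enumerating free binary parameters during the block analysis. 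One small imprecision: the natural map $\Lambda/(B-\I)\Lambda\to H_1(M_\G,\Z)=\G/[\G,\G]$ is an index-$p$ inclusion, not an isomorphism; the two groups happen to be abstractly isomorphic, and what you actually need, $\Lambda/(B-\I)\Lambda\cong\Z_p^a\oplus\Z^{b+c}$, is established blockwise in the proof of Proposition \ref{propZp}(iv), so the count is unaffected.
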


\begin{proof}
Let $M=\G\backslash \R^n$ be a $\Z_p$-manifold. By the results in \cite{MiPo06} (see also \cite{MiPo09}, \cite{MiPo04}), a group homomorphism $\vep : \G \rightarrow \spin$ as in \eqref{spindiagram} determines a spin structure on $M$ if and only if it satisfies the following conditions:
\begin{itemize} 
\sk
\item[\textsf{C1}.] \qquad $\vep(L_{B\ld})=\vep(L_\ld) \quad \text{ for any } \ld \in \Ld$,

\msk
\item[\textsf{C2}.] \qquad $\vep(\g)^p = \vep(\g^p) = \vep(L_{pb_+})$,
\end{itemize}
where $\g=BL_{b}$ with $b=b_+ + b'$ and $b' \perp b_+$. Furthermore, the orthogonal projection of $b_+$ on $\Ld_0 = c\I$ is not $0$.

\sk We fix a $\Z$-basis $f_1,\ldots,f_{n-1},f_n=e_n$ of $\Ld$.

\sk \noindent
\emph{Case 1, $M \simeq \zp$}
First, suppose that $M = \zp$ and assume a group homomorphism $\vep :\G_{p,a}^{b,c}(\mathfrak a) \rightarrow \spin$ as in \eqref{spindiagram} is given.

By \eqref{Bp-matrix}, \eqref{diag Bp-matrix} and \eqref{2 fold} we have
\begin{equation} \label{vep(g)}
\vep(\g) = \pm (-1)^{(a+b)[\f{q+1}{2}]} \; x_{a+b}\big(\tf{\pi}p, \tf{2\pi}p,\dots, \tf{q\pi}p \big)
\end{equation}
where $q=\f{p-1}{2}$.

We note that in this case, since $b_+ = \f {e_n}{p}$, condition \textsf{C2} will only give a condition for $\vep$ on $\Z e_n$. To determine the action of $\vep$ on
$$(\Z e_n)^\perp = \Ld_\mathcal{O} \oplus \Ld_R \oplus \Ld_0' = \mathcal{O}^{\oplus a} \oplus R^{\oplus b} \oplus \I^{\oplus (c-1)},$$
we will use condition $\textsf{C1}$ together with the integral matrix $C'$ given in \eqref{matrices C}. Let $\tilde \G := \langle C'L_{\f{e_n}{p}}, \tilde \Ld \rangle \subset \mathrm{Aff}(\R^n)$ where $\tilde \Ld = \Ld(a,b,c,\mathfrak a)$ is as in \eqref{Reiner}. By the description in \S \ref{zpabc}, we have $X_{\mathfrak a} \tilde \G X_{\mathfrak a}^{-1} = \G_{p,a}^{b,c}(\mathfrak a)$.
Now, define $\tilde \vep: \tilde \G \arr \spin$ by $\tilde\vep = \vep \circ I_{X_{\mathfrak a}}$ where $I_{X_{\mathfrak a}}$ is conjugation by $X_{\mathfrak a}$. Since $$\vep(L_{(B-\I)\Ld_{p,a}^{b,c}(\mathfrak a)}) = \vep( X_{\mathfrak a} L_{(C'-\I) \Ld} X_{\mathfrak a}^{-1}) = \tilde\vep(L_{(C'-\I)\Ld})$$ we have that $\vep(L_{(B-\I)\Ld_{p,a}^{b,c}(\mathfrak a)})=1$ if and only if $\tilde\vep(L_{(C'-\I)\Ld}) = 1$.

\sk \noindent
\emph{Step 1.} 
Here we will show that $\vep_{|\Ld_\mathcal{O}}\equiv 1$. For any summand of type $\mathcal{O}=\Z[\xi]$ in \eqref{Reiner}, there is a $\Z$-basis of the form $\{e,\xi\,e,\ldots,\xi^{p-2}\,e\}$. Hence by condition \textsf{C1} we must have
$1 = \tilde\vep(\xi\,e - e)=\cdots = \tilde\vep(\xi^{p-2}\, e - \xi^{p-3}\,e) = \tilde\vep(\xi^{p-1}\, e - \xi^{p-2}\,e)$.
Thus, we have
$\tilde\vep(e) = \tilde\vep( \xi\,e)= \cdots = \tilde\vep(\xi^{p-2}\,e) =  
\tilde\vep(e) \, \tilde\vep (\xi\,e) \cdots \tilde\vep (\xi^{p-2}\,e)$,
where in the last equality we have used that $\xi^{p-1}=-\sum_{i=0}^{p-2} \xi^i$.
This implies $\tilde\vep(e)^{p-2} = 1$, and hence $\tilde\vep(e)=1$ since $p$ is odd. Therefore, $\tilde\vep(\xi^j\, e)=1$ for every
$0\le j \le p-2$ and thus $\tilde\vep (\lambda )=1$ for any $\lambda \in \Ld_\mathcal{O}$.

Now, given a summand of type $\mathfrak{a}$ in $\Ld$, there exist $e_1$, $e_2 \in \mathfrak{a}$ such that $\mathfrak{a} = \mathcal{O}e_1 + \mathcal{O}e_2$.
By the same argument as in the case of $\mathcal{O}$, we conclude that $\tilde\vep_{|\mathcal{O}e_1 } = \tilde\vep_{|\mathcal{O}e_2 }
=1$. Hence $\tilde\vep_{|\mathfrak{a}}=1$.

\sk
In this way, for any $\lambda \in \Ld_\mathcal{O}$ we have $$\vep(L_{\ld}) = \vep(L_{X_{\mathfrak a} \ld}) = \vep(X_{\mathfrak a} L_{\ld} X_{\mathfrak a}^{-1}) = \tilde\vep(L_{\ld}) =1.$$
In particular, $\delta_j = \vep(L_{f_j}) = 1$ for $1\le j \le a(p-1)$.

\sk \noindent
\emph{Step 2.}
We now study the action of $\vep$ on the block $\Ld_R \oplus \Ld_0$. For each summand of type $R=\Z[\Z_p]$ in \eqref{Reiner}, there is a $\Z$-basis $\{e,\xi\,e,\ldots,\xi^{p-1}\,e\}$. Thus, by condition \textsf{C1} we have
$\tilde\vep(e) = \tilde\vep( \xi \,e) = \cdots = \tilde\vep(\xi^{p-1}\,e) \in \{\pm 1 \}$
since there are no extra restrictions. Hence $\tilde \vep_{|R} = \delta$, where $\delta\in \{\pm 1\}$ and, proceeding as before, we have
$$\vep(L_{f_{a(p-1)+jp+1}}) = \cdots = \vep(L_{f_{a(p-1)+jp+p}}) = \delta_j, \qquad 0 \le j \le b-1.$$
Also, for trivial summands, it is clear that $\vep(L_{f_i}) = \delta_i$ for $n-c+1 \le i \le n$. For $i\ne n$ there are no restrictions.

\sk \noindent
\emph{Step 3.}
Since $\g^p = L_{e_n}$, conditions \textsf{C1} and \textsf{C2} are linked together and give a restriction which determines both the value of $\vep(L_{e_n})$ and the sign in \eqref{vep(g)}.
Indeed, since $\g^p=L_{e_n}$ we have
$$\delta_n = 
\vep(\g)^p = \sigma \, x_{a+b}(\tf{\pi}p,\tf{2\pi}p,\ldots,\tf{q\pi}p)^p
= \sigma \, x_{a+b}(\pi,2\pi,\ldots,q\pi) = \sigma (-1)^{t+1}$$
with $\sigma = \pm (-1)^{(a+b)[\f{q+1}{2}]}$ and where we have used that $x(\theta)^k = x(k\theta)$ for any $\theta\in \R$, $k\in\Z$ and the commutativity in $\C l(n)$ of the elements $e_{2i-1}e_{2i}$ and $e_{2j-1}e_{2j}$ for $i \not = j$.

Putting $\pm = (-1)^{h+1}$ with $h=1,2$, we get the expressions in \eqref{eq.spinstructures}.

\sk \noindent \emph{Case 2, $M$ arbitrary}.
The proof is entirely analogous, where we now start with the $b+c-1$ characters $\delta_j$ corresponding to $\Ld_R$ and $\Ld_0'=(c-1)\I$, and where we have $(BL_b)^p =L_{pb_+}$ with $pb_+ \in \Ld_R^B \oplus \Ld_0$ (in place of $pb_+ =f_n$).

Then the equation  $\vep (\g)^p = \vep (\g^p) = \vep(L_{p b_+})$ imposes a condition linking the $\delta$'s
and we again have $2^{b+c}$ spin structures as in the case of the model before.
\end{proof}

\begin{remark}
It is known that if a manifold $M$ is spin, the inequivalent spin structures are classified by $H^1(M,\Z_2)$ (\cite{LM}, \cite{Fr97}). For $M$ a $\Z_p$-manifold, by Proposition \ref{propZp} (iv) and the universal coefficients theorem (or also directly), one can prove that $H^1(M,\Z_2) \simeq H_1(M,\Z_2) \simeq \Z_2^{b+c}$.
Hence, the number of spin structures of a $\Z_p$-manifold is $2^{b+c}=2^{\beta_1}$. In Proposition \ref{spinstructs} we give a direct proof of this fact together with an explicit description of these structures in the case of the models $M=\zp$.
\end{remark}


\section{Twisted eta series}\label{sect-3}

\subsection{Spectrum of twisted Dirac operators}\label{S2-STDO}
Let $(M_\G,\vep)$ be a  compact flat spin $n$-manifold with lattice of
translations $\Lambda$. Let $\rho : \Gamma \rightarrow U(V)$ be a unitary
representation such that $\rho_{|\Lambda} =1$. Consider the \textit{twisted
Dirac operator}
$$D_\rho = \sum_{i=1}^n L_n(e_i) \, \f{\partial}{\partial x_i},$$ where
$\{e_1,\ldots,e_n\}$ is an orthonormal basis of $\R^n$   and $L_n$ is
the spin representation, acting on smooth sections of the twisted spinor
bundle
$$\mathcal{S}_\rho(\man,\vep) = \G \backslash(\R^n\times (\s\otimes V))\arr \G\backslash \R^n$$
of $\man$ (see \cite{MiPo06} for details).

\sk Let $\Ld^\ast_{\vep} = \{ u \in \Ld^\ast : \vep(L_\ld) = e^{2\pi i
\ld\cdot u} \text{ for any } \ld \in \Ld \}$, where $\Ld^*$ is the dual
lattice. The nonzero eigenvalues of $D_\rho$ are of the form $\pm 2\pi \mu$
with $\mu = ||v||$ for some $v \in \Ld_\vep^*$. In \cite{MiPo06}, Theorem
2.5, it is shown that the multiplicities $d_{\rho,\mu}^\pm$ of $\pm 2\pi
\mu$ for $(\man,\vep)$ are given, for $n$ odd, by
\begin{equation}\label{eq.multipodd}
d_{\rho,{\mu}}^\pm(\G,\vep) = \tf{1}{|F|} \sum_{\g=BL_b \in \Ld \backslash
\G} \chi_\rho(\g) \sum _{u \in (\Ld_{\vep,\mu}^\ast)^B}
e^{-2\pi i u\cdot b} \;\chi_{_{L_{n-1}^{\pm \sigma(u,x_\g)}}}(x_\g).
\end{equation}
Here, $\chi_\rho$ and $\chi_{L_{n-1}^\pm}$ are the characters of $\rho$ and
of the half spin representations, respectively, and for $\g = BL_b \in \G$
we have $\Ld_{\vep,\mu}^\ast = \{ v \in \Ld_\vep^* \,:\, ||v||=\mu \}$ and
\begin{equation*}\label{lattice emuB}
(\Ld_{\vep,\mu}^\ast)^B = \{ v \in \Ld_{\vep,\mu}^* \,:\, Bv=v \}.
\end{equation*}
Furthermore, $x_\g \in T$ is a fixed element in the maximal torus of
$\spin$, conjugate in $\spin$ to $\vep(\g)$, and $\sigma(u, x_\g)$ is a sign
depending on $u$ and on the conjugacy class of $x_\g$ in $\text{Spin}(n-1)$
(see Definition 2.3 in \cite{MiPo06}).

\sk Relative to the multiplicity of the 0 eigenvalue, i.e.\@ the dimension of the space of harmonic spinors,
it is shown in \cite{MiPo06} that
\begin{equation}   \label{eq.harmonicspinors}
d_{\rho,0}(\Gamma,\vep) = \left\{ \begin{array}{ll} \tf{1}{|F|} \,
\sum\limits_{\g  \in \Ld \backslash \G} \, \chi_{_\rho}(\g) \;
{\chi}_{_{L_{n}}}(\vep(\g)) & \qquad  \text{if } \vep_{|\Ld}= 1, \sk \\  0 &
\qquad \text{if } \vep_{|\Ld} \ne 1.
\end{array} \right.
\end{equation}

\subsection{Spectral asymmetry}\label{specasymm}
Consider an arbitrary $\Z_p$-man\-ifold $M$ of dimension $n$ as in
\eqref{mzp}, with $p$ an odd prime, equipped with a spin structure
$\vep$. The formula for the multiplicity of the
eigenvalues \eqref{eq.multipodd} involves the character
$\chi_{\rho}$ of a representation $\rho : \Z_p \rightarrow U(V)$.
Thus, we will consider for each $0\le \ell \le p-1$, the Dirac
operator $D_\ell$ twisted by the characters
\begin{equation*}\label{eq.characters}
\rho_\ell : \Z_p \rightarrow \mathbb{S}^1 \subset \C^*, \qquad k \mapsto
e^{\f{2\pi ik \ell }{p}}.
\end{equation*}

\begin{remark}\label{symmetric spectrum}
By Corollary 2.6 in \cite{MiPo06}, valid for arbitrary compact flat manifolds, if $n$ is even, or else, if $n$ is odd and $n_B\ge 2$ for every $BL_b\in \G$, then the spectrum  of $D_\ell$ is symmetric. That is, one has that $d_{\ell,\mu}^+(\G,\vep) = d_{\ell,\mu}^- (\G,\vep)$. Hence $\eta_{\ell,\vep}(s) \equiv 0$ and, in particular, $\eta_{\ell,\vep} = 0$. In the case of $\Z_p$-manifolds, since $n_B= b+c$ and $c\ge 1$, we see that for the non-exceptional ones, i.e.\@ those with $(b,c) \ne (0,1)$, the 
eta invariant is just given by 
$\bar \eta_{\ell,\vep} = \tf 12 \dim \ker D_\ell$. This computation will be done in the next section (see \eqref{harmonics}).
\end{remark}

\sk
By the previous remark and Remark \ref{essential}, in the computations of $\eta_\ell(s)$ and $\eta_\ell$, we will need only consider \textsl{exceptional} $\Z_p$-manifolds of the form $M_{p,a}^{0,1}(\mathfrak a)$.

\msk
By \eqref{etaseries}, we can write
\begin{equation}\label{eq.etaseriesdif}
\eta_{\ell,h}(s):= \eta_{\ell}(\G,\vep_h)(s) =  \sum_{\pm 2\pi \mu \in
\mathcal{A}_{\ell,h}} \f{ d_{\ell,\mu,h}^+ - d_{\ell,\mu,h}^-}{(2\pi \mu)^s}
\end{equation}
for $\mathrm{Re}(s)>n$, where $d_{\ell,\mu,h}^\pm$ stand for
$d_{\rho_\ell,\mu}^\pm(\G,\vep_h)$ as given in (\ref{eq.multipodd}) and
$\mathcal{A}_{\ell,h}$ denotes the \emph{asymmetric spectrum}, that is
$$\mathcal{A}_{\ell,h} = \{ \pm 2\pi \mu \in Spec_{D_{\ell,h}}(M) : d_{\ell,\mu, h}^+\ne d_{\ell,\mu,h}^-\}.$$

To this end, we will first compute the differences,
$d_{\ell,\mu,h}^+ - d_{\ell,\mu,h}^-$
in Proposition~\ref{prop.Deltas}, and then the series in
\eqref{eq.etaseriesdif}, in Theorem \ref{thm.eta series}.

\begin{lemma}\label{lem.Deltafinal}
Let $p$ be an odd prime and $\ell \in \N$ with $0\le \ell \le p-1$. Let $M$ be  an exceptional $\Z_p$-manifold with a spin structure $\vep_h$, $h=1,2$. Then
\begin{eqnarray*}
&&d_{\ell,\mu,h}^+ - d_{\ell,\mu,h}^- \\
&=& (-1)^{(\f{p^2-1}{8})a+1} \, i^{m+1} \, 2 \, p^{\f{a}{2}-1} \, \sum_{k=1}^{p-1} (-1)^{k(h+1)} \, \big(\tf kp \big)^a
\, e^{\f{2\pi i k\ell}{p}}   \sin(\tf{2\pi \mu k}p),
\end{eqnarray*}
where $\big( \f{\cdot}p \big)$ is the Legendre symbol and $d_{\ell,\mu,h}^\pm$ denotes the multiplicity of the eigenvalue $\pm 2\pi \mu$ of $D_\ell$.
\end{lemma}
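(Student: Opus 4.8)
The plan is to start from the multiplicity formula \eqref{eq.multipodd} specialized to an exceptional $\Z_p$-manifold $M = M_{p,a}^{0,1}(\mathfrak a)$, where $F \simeq \Z_p$, $n = a(p-1)+1$, and $m = [n/2] = \tfrac{a(p-1)}{2}$. Writing $F = \{1, B, \dots, B^{p-1}\}$ and using that $\vep(L_\ld) = 1$ for $\ld$ in the $\mathcal{O}$-part while $\vep(L_{e_n}) = (-1)^{h+1}$, I would first identify the set $(\Lambda^*_{\vep_h,\mu})^B$. Since $B$ has no nonzero fixed vectors on $(\Z e_n)^\perp$, a vector $u \in \Lambda^*$ fixed by $B$ must be a multiple of $e_n$; the condition $\vep_h(L_{e_n}) = e^{2\pi i e_n \cdot u}$ then pins $u$ down to the coset $\tfrac{h-1}{2}e_n + \Z e_n$ of $\Z e_n$ in $\tfrac12\Z e_n$ (more precisely $u = \pm\mu e_n$ with $2\mu \equiv h-1 \bmod 2$). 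For the $\g = BL_b$ with $b = \tfrac1p e_n$, the inner term $e^{-2\pi i u\cdot b}$ becomes $e^{\mp 2\pi i \mu/p}$, and the contribution of the identity element $B^0$ vanishes because $\chi_{L_{n-1}^+} - \chi_{L_{n-1}^-}$ evaluated on $x_g = 1$ uses $\prod \sin t_j = 0$. So only the terms $B^k$, $1 \le k \le p-1$, survive.

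Next I would compute $d^+_{\ell,\mu,h} - d^-_{\ell,\mu,h}$ by subtracting the two instances of \eqref{eq.multipodd}. The key simplification is that $\chi_{L^+_{n-1}}(x_g) - \chi_{L^-_{n-1}}(x_g) = \pm i^{m} \prod_{j} \sin t_j$ by \eqref{spinpmchars}, so the difference kills the $\cos$-products and retains only the imaginary part. For the element $\g^k = B^k L_{b_k}$ (with appropriate translation part), $x_{\g^k}$ is conjugate to $x_{a}(\tfrac{k\pi}{p}, \tfrac{2k\pi}{p}, \dots, \tfrac{qk\pi}{p})^{?}$ — concretely, raising \eqref{vep(g)} to the $k$-th power multiplies each angle by $k$, giving $x_a(\tfrac{k\pi}{p}, \dots, \tfrac{qk\pi}{p})$ up to a sign $(-1)^{(a+b)[\frac{q+1}{2}]\,k}$ and the sign from $\sigma(u, x_{\g^k})$. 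Then $\prod_{j=1}^{q}\bigl(\sin\tfrac{jk\pi}{p}\bigr)^a$ appears, and by the trigonometric product formulas promised in \S\ref{sect-a-STP} (the classical $\prod_{j=1}^{(p-1)/2} 2\sin\tfrac{jk\pi}{p} = \pm\sqrt{p}$ with the sign governed by the Legendre symbol $\bigl(\tfrac kp\bigr)$), this product collapses to $\bigl(\tfrac kp\bigr)^a \, p^{a/2} / 2^{qa}$ up to an explicit power of $(-1)$ depending on $a$ and $p \bmod 8$, i.e.\ the factor $(-1)^{(\frac{p^2-1}{8})a}$. The factor $2^m/2^{qa} = 2^{(a(p-1)/2) - a(p-1)/2} = 1$ accounts for most of the powers of $2$, leaving the overall $2\,p^{a/2 - 1}$ after the $\tfrac{1}{|F|} = \tfrac1p$ normalization.

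Finally I would assemble the character sum: the factor $\chi_{\rho_\ell}(B^k) = e^{2\pi i k\ell/p}$ comes directly, the factor $(-1)^{k(h+1)}$ comes from $\vep_h(L_{e_n})^{k}$-type signs (equivalently from $e^{-2\pi i u \cdot b_k}$ combined with the $h$-dependence of $u$), and pairing the $k$ and $p-k$ terms — or equivalently pairing $u = \mu e_n$ with $u = -\mu e_n$ — turns $e^{\mp 2\pi i \mu k/p}$ into $-2i\sin(\tfrac{2\pi\mu k}{p})$, producing the stated $\sin$ and absorbing one factor of $i$ to make $i^{m+1}$. The main obstacle, and where I expect the real work to lie, is bookkeeping the three independent sign contributions — the $(-1)^{(a+b)[\frac{q+1}{2}]k}$ from $\vep_h(\g)^k$, the $\sigma(u, x_{\g^k})$ sign from \cite{MiPo06}, Definition 2.3, and the Legendre-symbol sign from the trigonometric product — and checking that they combine to exactly $(-1)^{(\frac{p^2-1}{8})a + 1}(-1)^{k(h+1)}\bigl(\tfrac kp\bigr)^a$ with no residual $k$-dependence outside the Legendre symbol. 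This is essentially a careful application of the Gauss-sum and trigonometric-product lemmas of Section~6, so I would isolate those sign computations as separate lemmas (as the paper indeed does) rather than interleave them here.
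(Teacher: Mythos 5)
Your proposal follows the paper's proof essentially step for step: specialize the multiplicity formula \eqref{eq.multipodd} to the exceptional case, identify $(\Ld^*_{\vep_h,\mu})^{B^k}=\{\pm\mu e_n\}$ so that only $k\neq 0$ contributes to $d^+_{\ell,\mu,h}-d^-_{\ell,\mu,h}$, use the half-spin character formula to see that only the $\sin$-products survive the subtraction, and then invoke the trigonometric-product evaluation of \S\ref{sect-a-STP}. The sign bookkeeping you defer is carried out in the paper by combining the sign $(-1)^{s_{h,k}}$ from $\vep_h(\g)^k$ (eq.\ \eqref{eq.signs}) with the $(-1)^{(k-1)(p^2-1)/8}$ from Lemma \ref{lem.prodtrigs}(i); since $(-1)^{(p^2-1)/8}=(-1)^{[(q+1)/2]}$, the $k$-dependent parts of these two signs cancel, leaving exactly the constant $(-1)^{(p^2-1)a/8+1}$ in front and $(-1)^{k(h+1)}$ inside the sum (note that this last factor really comes from the choice of sign in $\vep_h(\g)$, not from the exponential $e^{-2\pi i u\cdot b_k}$ as your parenthetical alternative suggests).
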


\begin{proof}
Given an exceptional $\Z_p$-manifold $M_\G$, by Proposition \ref{propZp} (iii), we may assume that $\G =\langle \g, \Ld \rangle$ with $\g = BL_b \in \G$, $B^p=\I$ and $b=\f{e_n}{p}$.

\sk
We begin by computing the expression in \eqref{eq.multipodd}.  Note that the holonomy group is $F \simeq \{\I,B,B^2,\ldots,B^{p-1}\}$. Since $\vep_h(\g^k) = \vep_h(\g)^k \in T$, 
we can take $x_{\g^k} = \vep_h(\g^k)$ and hence
$\sigma(e_n,x_{\g^k})=1$ for every $1\le k \le n-1$, by the definition of
$\sigma$ (see \cite{MiPo06}). Hence, according to (\ref{eq.multipodd}), if
$b_k$ is defined by the relation $\g^k = B^k L_{b_k}$, we obtain
\begin{equation}\label{multipoddzp}
d_{\ell,\mu,h}^\pm = \tf 1p \, \sum_{k=0}^{p-1} \rho_\ell(k) \sum _{u \in (\Ld_{\vep_h,\mu}^\ast)^{B^k}} e^{-2\pi i u \cdot b_k} \; \chi_{{L_{n-1}^{\pm \sigma
}}}(\vep_h({\g}^k))
\end{equation}
where $\sigma = \sigma(u,x_{\g^k})$.

Now, since $\Ld = \big( \mathfrak a \oplus (a-1)\mathcal{O}  \big) \stackrel{\perp}{\oplus} \Z e_n$
and $(\R^n)^{B^k} = \R e_n$, $1\le k \le n-1$, we have that $(\Ld_{\vep_h}^\ast)^{B^k} = \Z e_n$ if $h=1$ and $(\Ld_{\vep_h}^\ast)^{B^k} = (\Z+\tf12) e_n$ if $h=2$. Hence, 
\begin{equation}\label{eq.latis}
(\Ld_{\vep_h,\mu}^\ast)^{B^k} = \{\pm \mu e_n\}
\end{equation}
with $\mu \in \N$ for $\vep_1$ and $\mu \in \N_0+\tf12$ for $\vep_2$.

\sk In this way, using
\eqref{eq.latis} and the fact that $b_k = \f{k}{p}e_n$, we see that
\eqref{multipoddzp} reduces to
\begin{equation}\label{eq.multips}
d_{\ell,\mu,h}^\pm = \tfrac 1p \,\Big(2^{m-1}|\Ld_{\vep_h,\mu}^*| + \sum_{k=1}^{p-1} e^{\f{2\pi
i k\ell}{p}} S_{\mu,h}^\pm(k)
\Big)
\end{equation}
where we have put
\begin{equation}\label{eq.skmus}
S_{\mu,h}^\pm(k) := e^{\f{-2 \pi i \mu k}p} \chi_{L_{n-1}^\pm}(\vep_h(\g^k))
+ e^{\f{2 \pi i \mu k}p} \chi_{L_{n-1}^\mp}(\vep_h(\g^k)).
\end{equation}
Here we have used that $\sigma(-u,\gamma)=-\sigma(u,\gamma)$ and that
$\sigma(e_n,\g^k)=1$.

\sk Now, using that $x(\theta)^k=x(k\theta)$ for $\theta\in \R, k\in \Z$, we
have that
\begin{equation}\label{eq.spinstructure gk}
\vep_h(\g^k)= (-1)^{s_{h,k}} \; x_{a}\big(\tf{k\pi}p, \tf{2k\pi}p,\dots,
\tf{qk\pi}p \big)
\end{equation}
for $1\le k\le p$, where
\begin{equation}\label{eq.signs}
s_{h,k} := k([\tf{q+1}{2}]a + h + 1).
\end{equation}
Thus, by \eqref{eq.spinstructure gk} and using \eqref{spinpmchars}, we
obtain
$$\chi_{_{L^\pm_{n-1}}}(\vep_h(\g^k)) = (-1)^{s_{h,k}} \, 2^{m-1} \Big\{
\Big(\prod_{j=1}^q \cos(\tf{jk\pi}p)\Big)^{a} \pm i^m \Big(\prod_{j=1}^q
\sin(\tf{jk\pi}p)\Big)^{a} \Big\}.$$
Substituting this expression into \eqref{eq.skmus} we see that
\begin{multline}\label{eq.skhmus}
S_{\mu,h}^\pm(k)  =  (-1)^{s_{h,k}} \,2^{m} \\ \times \Big\{
\cos(\tfrac{2k\pi \mu}p) \Big(\prod_{j=1}^q \cos(\tfrac{jk\pi}p)
\Big)^{a} \mp i^{m+1} \sin(\tfrac{2k\pi \mu}p) \Big(\prod_{j=1}^q
\sin(\tf{jk\pi}p) \Big)^{a} \Big\}.
\end{multline}
Hence, by
\eqref{eq.multips} and \eqref{eq.skhmus}, we obtain
\begin{eqnarray*}
d_{\ell,\mu,h}^+ - d_{\ell,\mu,h}^- & = & \tfrac 1p \, \sum_{k=1}^{p-1} e^{\f{2\pi i
k\ell}{p}}  \big( S_{\mu,h}^+(k) - S_{\mu,h}^-(k) \big)
\\
&=&  -\tfrac{(2i)^{m+1}}{p} \, \sum_{k=1}^{p-1} (-1)^{s_{h,k}} \, e^{\f{2\pi
i k\ell}{p}} \, \sin(\tf{2k\pi \mu}p)\, \Big(\prod_{j=1}^q
\sin \big(\tf{jk\pi}p\big)\Big)^{a}.
\end{eqnarray*}
Now, by Lemma \ref{lem.prodtrigs}(i), \eqref{eq.signs}, and also using that $(-1)^\f{p^2-1}8 =
(-1)^{[\f{q+1}{2}]}$ and $aq=m$, we arrive at the desired expression.
\end{proof}

Our next goal is to find explicit expressions for $d^+_{\ell,\mu,h} - d^-_{\ell,\mu,h}$.
First, we fix some notations. Set $p=2q+1$ and $n=2m+1$. Then, since $b=0$ and $c=1$, we get
$n=a(p-1)+1 = 2aq+1$. Thus, $m=aq$ and we have
\begin{equation}\label{conditions2}
a \text{ even} \: \Rightarrow \: m=2r, \quad a \text{ odd}  \: \Rightarrow
\: \left\{ \begin{array}{ll}
p = 4t+1 \quad \Leftrightarrow  \quad m=2r, \sk \\
p = 4t+3 \quad \Leftrightarrow \quad  m=2r+1.
\end{array} \right.
\end{equation}

The proof of Lemma \ref{lem.Deltafinal} shows that  $\mu \in \N$ if $h=1$
and $\mu \in \N_0 + \tf 12$ if $h=2$.

\begin{proposition}\label{prop.Deltas}
Let $p=2q+1$ be a prime and let $\ell \in \N_0$ be chosen with $0\le \ell \le p-1$. Consider $D_\ell$ acting on an
exceptional
$\Z_p$-manifold of dimension $n = a(p-1)+1$ equipped with a spin structure $\vep_h$, $h=1,2$. Put $r=[\f n4]$.

\sk (i) If $a$ is even, then
$d_{0,\mu,1}^+ - d_{0,\mu,1}^- = d_{0,\mu,2}^+ - d_{0,\mu,2}^- =0$
and if $\ell\ne0$ we have
$$ d_{\ell,\mu,h}^+ - d_{\ell,\mu,h}^- = \left\{ \begin{array}{ll} \pm (-1)^{r} \, p^{\f a2}
& \qquad \textrm{if } p \,|\, h(\ell \mp \mu), \msk
\\ 0 & \qquad \textrm{otherwise.} \end{array}\right. $$

\sk (ii)
If  $a$ is odd, then
\begin{align*}
d_{\ell,\mu,h}^+ - d_{\ell,\mu,h}^- = (-1)^{q+r} \, \Big( \big(\tf{2(\ell - \mu) }{p} \big) - \big( \tf{2(\ell + \mu)}{p} \big) \Big) \, p^{\f{a-1}{2}}.
\end{align*}
In particular, for $\ell=0$ we have
$$ d_{0,\mu,h}^+ - d_{0,\mu,h}^-
= \left\{ \begin{array}{ll}
0 & \quad \text{if } p\equiv 1 \,(4), \sk \\
(-1)^{r} \, 2 \, \big(\tf{2\mu}{p} \big) \, p^{\f{a-1}{2}}  & \quad
\text{if } p\equiv 3 \,(4),
\end{array} \right.$$
where $\big( \f{\cdot}p \big)$ denotes the Legendre symbol and $\mu\in \tf 12\N_0$.
\end{proposition}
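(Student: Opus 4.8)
The plan is to derive Proposition \ref{prop.Deltas} directly from the closed formula in Lemma \ref{lem.Deltafinal}, which already expresses $d_{\ell,\mu,h}^+ - d_{\ell,\mu,h}^-$ as a trigonometric character sum
$$
d_{\ell,\mu,h}^+ - d_{\ell,\mu,h}^- = (-1)^{(\f{p^2-1}{8})a+1}\, i^{m+1}\, 2\, p^{\f a2-1}\sum_{k=1}^{p-1}(-1)^{k(h+1)}\big(\tf kp\big)^a e^{\f{2\pi ik\ell}{p}}\sin\big(\tf{2\pi\mu k}{p}\big).
$$
The whole statement is thus a matter of evaluating this sum in the two parity regimes for $a$. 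First I would split off the case $a$ even: then $\big(\tf kp\big)^a = 1$ for all $k$ coprime to $p$, and writing $\sin(\tf{2\pi\mu k}{p}) = \tf1{2i}\big(e^{2\pi i\mu k/p} - e^{-2\pi i\mu k/p}\big)$ turns the sum into a difference of two geometric-type sums $\sum_{k=1}^{p-1}(-1)^{k(h+1)}e^{2\pi ik(\ell\mp\mu)/p}$. For $h=1$ the sign $(-1)^{k(h+1)}$ is trivial, so each such sum equals $p-1$ when $p\mid(\ell\mp\mu)$ and $-1$ otherwise; for $h=2$ one absorbs $(-1)^k = e^{\pi ik}$ into the exponential (this is where $\mu\in\N_0+\tf12$ for $h=2$ matters — the shift by $\tf12$ combines with the $(-1)^k$), and the same dichotomy appears with $h(\ell\mp\mu)$ in place of $\ell\mp\mu$. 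For $\ell=0$ the two terms $\ell-\mu$ and $\ell+\mu$ give opposite contributions that cancel (since $\mu$ and $-\mu$ play symmetric roles), yielding $0$; for $\ell\ne0$ exactly one of $p\mid h(\ell-\mu)$, $p\mid h(\ell+\mu)$ can hold, producing $\pm p^{a/2}$ after tracking the prefactor. Collecting the sign $(-1)^{(p^2-1)/8\cdot a}\,i^{m+1}\cdot\tf1{2i}\cdot(p-1+1)/p\cdot$(stuff) and using $(-1)^{(p^2-1)/8} = (-1)^{[(q+1)/2]}$ together with $m = aq$ and $r=[n/4]$ to rewrite the accumulated sign as $(-1)^r$ is the bookkeeping that finishes part (i).

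For part (ii), $a$ is odd, so $\big(\tf kp\big)^a = \big(\tf kp\big)$ and the sum becomes $\sum_{k=1}^{p-1}(-1)^{k(h+1)}\big(\tf kp\big)e^{2\pi ik\ell/p}\sin(\tf{2\pi\mu k}{p})$, again split via $\sin = \tf1{2i}(e^{\cdot}-e^{\cdot})$ into two \emph{twisted quadratic Gauss sums} $\sum_{k=1}^{p-1}\big(\tf kp\big)(-1)^{k(h+1)}e^{2\pi ik(\ell\mp\mu)/p}$. Here I expect to invoke the evaluation of such Gauss sums from Section \ref{Sect-A-TCGS} (the "twisted character Gauss sums" the introduction promises): for a quadratic character $\big(\tf\cdot p\big)$ one has $\sum_k\big(\tf kp\big)e^{2\pi ikm/p} = \big(\tf mp\big)g_p$ where $g_p = \sum_k\big(\tf kp\big)e^{2\pi ik/p}$ is the classical Gauss sum, equal to $\sqrt p$ if $p\equiv1\ (4)$ and $i\sqrt p$ if $p\equiv3\ (4)$; the sign twist $(-1)^{k(h+1)} = \big(\tf{(-1)^{h+1}k}{p}\big)\big/\big(\tf{(-1)^{h+1}}{p}\big)$-type manipulation (or simply absorbing $(-1)^k$ as a shift for $h=2$, using $\mu\in\N_0+\tf12$) lets one reduce to the untwisted case, introducing the factor $\big(\tf2p\big)$ or an overall constant. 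Subtracting the two Gauss sums gives $\big(\big(\tf{2(\ell-\mu)}{p}\big) - \big(\tf{2(\ell+\mu)}{p}\big)\big)g_p\cdot(\text{const})$, and combining $g_p$ with the $i^{m+1}$ and the $\tf1{2i}$ and the sign prefactor collapses to $p^{(a-1)/2}$ times $(-1)^{q+r}$ — using $m=aq$, $r=[n/4]=[( aq)/2+\ldots]$ and the congruence description of $m$ parity in \eqref{conditions2}. The specialization $\ell=0$ then just reads off $\big(\tf{-2\mu}p\big) - \big(\tf{2\mu}p\big)$, which is $0$ when $-1$ is a square mod $p$ (i.e. $p\equiv1\ (4)$) and $-2\big(\tf{2\mu}p\big)$ when $p\equiv3\ (4)$, matching the claim up to the $(-1)^r$ sign.

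The main obstacle, and the place requiring genuine care rather than routine algebra, is the sign bookkeeping: one must correctly combine the prefactor $(-1)^{(p^2-1)a/8}\,i^{m+1}$ from Lemma \ref{lem.Deltafinal}, the $\tf1{2i}$ from expanding $\sin$, the Gauss sum phase $g_p\in\{\sqrt p, i\sqrt p\}$ (in case (ii)), and the relations $m = aq$, $p = 2q+1$, $n = a(p-1)+1 = 2m+1$, $r = [n/4]$ to land exactly on $(-1)^r$ in case (i) and $(-1)^{q+r}$ in case (ii). This is delicate because $i^{m+1}$ is genuinely complex and only becomes real after multiplying by the Gauss sum or by the cancellations in the even case, and because the parity of $m$ depends on both $a\bmod 2$ and $p\bmod 4$ as recorded in \eqref{conditions2}; I would handle it by treating the four sub-cases (parities of $a$ against residues of $p$ mod $4$) separately when pinning down the sign, even though the final formulas in (i) and (ii) are stated uniformly. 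A secondary subtlety is the role of the half-integer shift: for the spin structure $\vep_2$ the lattice $(\Ld_{\vep_2,\mu}^*)^{B^k}$ consists of half-integer multiples of $e_n$, so $\mu\in\N_0+\tf12$, and the factor $(-1)^{k(h+1)}$ with $h=2$ must be merged with the $e^{\pm2\pi i\mu k/p}$ correctly — this is exactly what produces the "$p\mid h(\ell\mp\mu)$" divisibility condition in (i) (as opposed to just $p\mid(\ell\mp\mu)$) and the factor of $2$ in the $h$-dependence; I would verify it by rewriting $(-1)^k e^{\pm2\pi i\mu k/p} = e^{2\pi ik(\pm\mu + p/2)/p}$ and noting $\pm\mu + p/2 \in \tf12\Z$ has the right divisibility behavior against $p$.
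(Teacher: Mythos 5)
Your proposal is correct and follows essentially the same route as the paper: substitute the closed formula from Lemma \ref{lem.Deltafinal}, use $(\tf kp)^a=\chi_0(k)$ or $(\tf kp)$ according to the parity of $a$, expand the sine into exponentials to produce the twisted sums $F_h^{\chi_0}$ and $F_h^{\chi_p}$ of Definition \ref{defi.sums}, evaluate them via (what amounts to) Propositions \ref{prop.Qh} and \ref{prop.Fh}, and then reduce the sign prefactor to $(-1)^r$ resp.\ $(-1)^{q+r}$ using $m=aq$, $(\tf2p)=(-1)^{(p^2-1)/8}$, and the parity dichotomy in \eqref{conditions2}. The only difference is presentational: the paper packages the Gauss-sum evaluations into separate appendix propositions and introduces the shift $c_\mu=\mu-\tf{\delta_{h,2}}{2}$ explicitly, whereas you carry the half-integer $\mu$ along and absorb $(-1)^k$ into the exponential on the fly, but these are the same computation.
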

\begin{proof}
We define the integer
\begin{equation} \label{eq:cmu}
c_\mu = c(\mu,h) := \mu-\tf{\delta_{h,2}}{2} \in \N_0,
\end{equation}
where $\delta_{_{h,2}}$ is the Kronecker delta function. Note that the
expression in Lemma \ref{lem.Deltafinal} can be written as
\begin{equation*}\label{eq.Delta a}
d_{\ell,\mu,h}^+ - d_{\ell,\mu,h}^- \left\{ \begin{array}{ll}
-i^{m+1} \,2\,p^{\f{a}{2}-1} F_h^{\chi_{_0}}(\ell,c_\mu) & \qquad \text{if
$a$ even,} \msk \\
- i^{m+1} \,2\,  p^{\f{a}{2}-1} \,(-1)^{(\f{p^2-1}{8})} \,
F_h^{\chi_p}(\ell,c_\mu) & \qquad \text{if $a$ odd,}
\end{array} \right.
\end{equation*}
in the notations of Definition \ref{defi.sums}.

Assertion (i) follows directly from
Proposition \ref{prop.Qh} and from the previous expression.
Relative to assertion (ii), we can
apply Proposition \ref{prop.Fh} and the fact that $\big( \f 2p \big) =
(-1)^{\f{p^2-1}{8}}$ to get
\begin{align*}
d_{\ell,\mu,1}^+ - d_{\ell,\mu,1}^- & = i^m \, \delta(p)  \, p^{\f{a-1}{2}} \, \big( \tf
2p \big) \, \Big( \big( \tf{\ell-\mu}{p} \big) - \big(\tf{\ell+\mu}{p}\big) \Big), \\ d_{\ell,\mu,2}^+ - d_{\ell,\mu,2}^- & = i^m \, \delta(p)  \, p^{\f{a-1}{2}} \, \Big(
\big( \tf{2(\ell-c_\mu)-1}{p} \big) - \big(\tf{2(\ell+c_\mu)+1}{p}\big) \Big),
\end{align*}
where $\delta(p)=1$ if $p\equiv 1 \,(4)$ and $\delta(p)=i$ if $p\equiv 3 \,(4)$.
Note that by \eqref{conditions2} $i^m \delta(p) = (-1)^{q+r}$. The result follows from \eqref{eq:cmu} and the
multiplicativity of $(\tf .p)$. In particular, for $\ell=0$ we get the remaining assertion.
\end{proof}

\subsection{Eta series}\label{S3-ES}
We are now in a position to explicitly compute the twisted eta function $\eta_{\ell,h}(s)$ of a general spin $\Z_p$-manifold $(M,\vep_h)$.
We shall see that the expressions will be given in
terms of Hurwitz zeta functions
\begin{equation}\label{hurwitz}
\zeta(s,\alpha) = \sum_{n\ge 0} \tf{1}{(n+\alpha)^s}, \qquad
\mathrm{Re}(s)>1, \quad \alpha \in (0,1].
\end{equation}

\begin{theorem}\label{thm.eta series}
Let $p$ be an odd prime, $\ell\in \N_0$ with $0 \le \ell\le p-1$.
Let $(M,\vep_h)$,
be an exceptional spin $\Z_p$-manifold of dimension $n=a(p-1)+1$.
Put $r=[\tf n4]$ and $t=[\f{p}{4}]$. Then,
the eta series is given as follows:

\sk \emph{(i)} Let $a$ be even. Then $\eta_{0,h}(s) =0$, $h=1,2,$ and for
$1\le \ell \le p-1$ we have
\begin{align*}
& \eta_{\ell,1}
(s) = \tf{(-1)^r}{(2\pi p)^s} \, p^{\f a2} \, \big( \zeta(s,\tf \ell p) -
\zeta(s,\tf{p-\ell}{p}) \big), \\ \\
& \eta_{\ell,2}
(s) = \left\{ \begin{array}{ll} \tf{(-1)^r}{(2\pi p)^{s}} \,\ p^{\f a2} \,
\Big( \zeta(s,\tf 12 + \tf \ell p) - \zeta(s,\tf 12 - \tf \ell p) \Big) &
\quad  1 \le \ell \le
q, \msk \\
\tf{(-1)^{r}}{(2\pi p)^{s}} \, p^{\f a2} \, \Big( \zeta(s,\tf 12 -
\tf{p-\ell}{p}) - \zeta(s,\tf 12 + \tf{p-\ell}{p}) \Big) & \quad q< \ell <p.
\end{array} \right.
\end{align*}

\sk \emph{(ii)} Let $a$ be odd. Then, for $0\le \ell\le p-1$ we have
\begin{align*}
\eta_{\ell,1}(s) & = \tf{(-1)^{t+r}}{(2\pi p)^s} \, p^{\f{a-1}{2}} \, \sum_{j=1}^{p-1}
\Big( (\tf{\ell -j}{p}) - (\tf{\ell +
j}{p}) \Big) \, \zeta(s,\tf jp),
\sk \\
\eta_{\ell,2}(s) & = \tf{(-1)^{q+r}}{(\pi p)^s} \, p^{\f{a-1}{2}} \, \sum_{j=0}^{p-1}
\left( (\tf{2\ell-(2j+1)}{p}) -
(\tf{2\ell+(2j+1)}{p}) \right) \, \zeta(s,\tf{2j+1}{2p}).
\end{align*}
In particular, $\eta_{0,h}(s)=0$ for $p\equiv 1 \, (4)$.
\end{theorem}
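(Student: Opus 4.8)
The plan is to start from the defining series \eqref{eq.etaseriesdif} for $\eta_{\ell,h}(s)$ and substitute the explicit formulas for the differences $d^+_{\ell,\mu,h}-d^-_{\ell,\mu,h}$ from Proposition \ref{prop.Deltas}. Since we are dealing with an exceptional $\Z_p$-manifold, the proof of Lemma \ref{lem.Deltafinal} tells us that the eigenvalues of $D_\ell$ are $\pm 2\pi\mu$ with $\mu\in\N$ when $h=1$ and $\mu\in\N_0+\tf12$ when $h=2$. Hence the eta series is a sum over these $\mu$, weighted by $(2\pi\mu)^{-s}$, and each coefficient $d^+_{\ell,\mu,h}-d^-_{\ell,\mu,h}$ depends on $\mu$ only through the residue of (a linear function of) $\mu$ modulo $p$. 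The strategy in each case is thus to collect the terms according to the residue class $j\equiv\pm\mu\bmod p$ (or $\equiv 2\mu\pm\cdots\bmod p$ for $h=2$), at which point the inner sum over all $\mu$ in a fixed residue class becomes precisely a Hurwitz zeta function $\zeta(s,\tf jp)$ or $\zeta(s,\tf{2j+1}{2p})$ times a power of $p$ coming from the rescaling $2\pi\mu=2\pi p\cdot\tf\mu p$.

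For \textbf{part (i)}, $a$ even: Proposition \ref{prop.Deltas}(i) immediately gives $\eta_{0,h}(s)=0$ since all differences vanish, and for $1\le\ell\le p-1$ the nonzero contributions are $\pm(-1)^r p^{a/2}$ exactly when $p\mid h(\ell\mp\mu)$. For $h=1$ this means $\mu\equiv\ell$ (sign $+$) or $\mu\equiv-\ell$ (sign $-$) mod $p$; writing $\mu=\ell+pn$ resp. $\mu=(p-\ell)+pn$ with $n\ge 0$ and summing $(2\pi\mu)^{-s}$ produces $\zeta(s,\tf\ell p)-\zeta(s,\tf{p-\ell}p)$ after factoring out $(2\pi p)^{-s}$. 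For $h=2$, $\mu\in\N_0+\tf12$, the congruence $p\mid 2(\ell\mp\mu)$ translates (using that $p$ is odd, so $2$ is invertible) into $\mu\equiv\pm\ell\bmod p$ again, but now the half-integrality forces a case split: the smallest positive half-integer in the class $\ell\bmod p$ is $\tf12+\tf\ell p$ after rescaling, or $\tf12-\tf{p-\ell}p$, according to whether $\ell\le q$ or $\ell>q$ — this is exactly the dichotomy recorded in the statement, and it comes down to deciding whether $\ell-\tf12$ or $\ell-\tf12-p$ is the representative in $(-\tf p2,\tf p2)$.

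For \textbf{part (ii)}, $a$ odd: Proposition \ref{prop.Deltas}(ii) gives $d^+_{\ell,\mu,h}-d^-_{\ell,\mu,h}=(-1)^{q+r}\big((\tf{2(\ell-\mu)}p)-(\tf{2(\ell+\mu)}p)\big)p^{(a-1)/2}$ for $h=1$, with the analogous half-integer-shifted version for $h=2$ from the displayed formulas in that proof. For $h=1$ one reindexes the sum over $\mu\in\N$ by the residue $j=\mu\bmod p$, $j\in\{1,\dots,p-1\}$ (the class $j=0$ contributes a vanishing Legendre symbol), the Legendre symbols depending only on $j$; summing $(2\pi\mu)^{-s}$ over $\mu\equiv j$ gives $(2\pi p)^{-s}p^{(a-1)/2}\zeta(s,\tf jp)$, and one absorbs $(\tf 2p)=(-1)^{(p^2-1)/8}=(-1)^t$ into the sign $(-1)^{q+r}$ via $(-1)^{q+t}$-type bookkeeping modulo the relations in \eqref{conditions2}, yielding the stated $(-1)^{t+r}$. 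For $h=2$ the half-integers $\mu\in\N_0+\tf12$ are written $\mu=\tf{2j+1}{2p}\cdot p$ after rescaling by $2\pi p$; here the natural denominator is $2p$, the rescaling factor in front is $(\pi p)^{-s}=(2\pi p)^{-s}2^s$ rather than $(2\pi p)^{-s}$ (accounting for the $\tf12$ in $\mu$), and the Legendre symbols become $(\tf{2\ell\mp(2j+1)}p)$, giving the second formula. Finally, when $p\equiv1\,(4)$ the sign relation $\big(\tf{-1}p\big)=1$ makes $(\tf{\ell-j}p)$ and $(\tf{\ell+j}p)$ cancel in pairs $j\leftrightarrow p-j$ (and similarly in the $h=2$ sum), so $\eta_{0,h}(s)=0$; this also follows directly from the $\ell=0$ case of Proposition \ref{prop.Deltas}(ii).

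I expect the main obstacle to be the \textbf{careful bookkeeping of the half-integer case $h=2$}: correctly identifying which Hurwitz parameter in $(0,1]$ a given residue class of half-integers corresponds to after the rescaling by $2\pi p$, handling the resulting case split in part (i) for $\ell\le q$ versus $\ell>q$, and tracking the extra factor $2^s$ (i.e.\ $(\pi p)^{-s}$ vs.\ $(2\pi p)^{-s}$) together with the sign consolidations using \eqref{conditions2} and the supplementary identity $\big(\tf2p\big)=(-1)^{(p^2-1)/8}$. The convergence for $\operatorname{Re}(s)>n$ and the identification of the meromorphic continuation are automatic once the finite sum of Hurwitz zeta functions is in place, since each $\zeta(s,\alpha)$ is meromorphic on $\C$ with its only (simple) pole at $s=1$.
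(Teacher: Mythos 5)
Your overall plan mirrors the paper's proof exactly: substitute the multiplicity differences from Proposition~\ref{prop.Deltas} into \eqref{eq.etaseriesdif}, reindex the sum over $\mu$ by its residue class modulo $p$ (with the appropriate half-integer shift when $h=2$), and recognize the inner sums as Hurwitz zeta functions. The case split for $h=2$ in part (i) according to $\ell\le q$ or $\ell>q$ is handled in the paper in just the way you describe. So far so good.

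However, two of your concrete bookkeeping claims are wrong, and one of them is not a mere typo but a genuine faulty step. In part (ii), $h=2$, you assert that ``the rescaling factor in front is $(\pi p)^{-s}=(2\pi p)^{-s}2^s$ rather than $(2\pi p)^{-s}$ (accounting for the $\tf12$ in $\mu$).'' This is incorrect: writing $\mu=c+\tf12$ with $c=pt+j$, one has $\mu=p\bigl(t+\tf{2j+1}{2p}\bigr)$, so $(2\pi\mu)^{-s}=(2\pi p)^{-s}\bigl(t+\tf{2j+1}{2p}\bigr)^{-s}$; the factor of $\tf12$ is already absorbed into the choice of Hurwitz parameter $\tf{2j+1}{2p}$, and positing an additional $2^s$ double-counts it. The paper's own computation in the proof of Theorem~\ref{thm.eta series} arrives at $(2\pi p)^{-s}$, and a direct check on the tricosm ($p=3$, $a=1$, $\ell=0$) confirms $(2\pi p)^{-s}$: the eta series works out to $\tf{2}{(6\pi)^s}\bigl(\zeta(s,\tf16)-\zeta(s,\tf56)\bigr)$, not $\tf{2}{(3\pi)^s}(\cdots)$. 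The $(\pi p)^{-s}$ appearing in the statement of Theorem~\ref{thm.eta series} is therefore a misprint in the paper, and your attempt to reverse-engineer a justification for it produced a spurious factor of $2^s$. A correct derivation would flag the discrepancy with the printed statement rather than manufacture it.

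Second, in part (ii), $h=1$, you write $\bigl(\tf 2p\bigr)=(-1)^{(p^2-1)/8}=(-1)^t$. The first equality is true but the second is not: it holds only when $p\equiv1\pmod4$, while for $p=4t+3$ one has $(p^2-1)/8=(t+1)(2t+1)$, giving $\bigl(\tf 2p\bigr)=(-1)^{t+1}$. The identity actually used in the paper is $(-1)^q\bigl(\tf 2p\bigr)=(-1)^t$, which holds for all odd primes (one checks the two cases $p=4t+1$, $q=2t$ and $p=4t+3$, $q=2t+1$ separately). Your phrase ``$(-1)^{q+t}$-type bookkeeping modulo the relations in \eqref{conditions2}'' gestures at this, but the chain of equalities as written is false and would leave a sign error when $p\equiv3\pmod4$, which is precisely the regime where the untwisted eta invariant is nonzero. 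Both issues are local and the overall architecture of your argument is sound, but as written the proof does not correctly establish the normalizations and signs.
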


\begin{proof}
By \eqref{eq.etaseriesdif} and \eqref{eq:cmu}, we have to compute the series
\begin{equation}\label{etaseriesdif2}
\eta_{\ell,h}(s) = \tf{1}{\pi^s} \, \sum_{c=1}^{\infty}
\f{d_{\ell,\mu,h}^+ - d_{\ell,\mu,h}^-}{(2c-\delta_{_{h,2}})^s}.
\end{equation}

We first prove (i). Let $a$ be even. By Proposition \ref{prop.Deltas} we have that $d_{0,\mu,h}^+ - d_{0,\mu,h}^- = 0$ and hence $\eta_{0,h}(s) = 0$, $h=1,2$. Also, for $1 \le \ell \le p-1$ we have $ d_{\ell,\mu,h}^+ - d_{\ell,\mu,h}^- = \pm (-1)^r \, p^{\tf a2}$
if $p \,|\, h(\ell \mp \mu)$ where $\mu = c_\mu + \f{\delta_{_{h,2}}}{2}$
with $c_\mu \in \N_0$; and $ d_{\ell,\mu,h}^+ - d_{\ell,\mu,h}^- = 0$ otherwise. Let $c=c_\mu
\ge 1$.

\msk (a) Take $h=1$ and $p\,|\, \ell \mp c$. Then $c = \mp (pk-\ell)$ for
some $k\in\Z$ and
\begin{eqnarray*}
c=\ell - pk \ge 1 \: \Leftrightarrow \: k\le 0, \qquad   c=pk - \ell \ge
1 \: \Leftrightarrow \: k\ge 1.
\end{eqnarray*}
Thus, by \eqref{etaseriesdif2} we get
\begin{eqnarray*}
\eta_{\ell,1}(s) &=&
\tf{(-1)^r}{(2\pi)^s}\, p^{\f a2} \, \Big( \sum_{k\le 0} \tf{1}{{(\ell -
pk)}^s} - \sum_{k\ge 1} \tf{1}{{(pk-\ell)}^s} \Big)\\& =&
\tf{(-1)^r}{(2\pi p)^s} \,  p^{\f a2} \, \big( \zeta(s,\tf \ell p) -
\zeta(s, \tf{p-\ell} p) \big).
\end{eqnarray*}

\msk (b) Take $h=2$ and $p\,|\, 2(\ell \mp c) \mp 1$. Then $2(\ell \mp c) \mp
1 = pk$, with $k$ odd. Thus, we have $2c+1=\pm
(2\ell-pk)$, $k$ odd, and
\begin{align*}
& 2\ell-pk \ge 1 \quad \Leftrightarrow  \quad \left\{ \begin{array}{ll}
k\le -1  &  \quad \text{if } 1\le \ell \le q, \sk \\ k\le 1 &
\quad \text{if } q<\ell <p, \end{array}\right. \msk \\  & pk-2\ell \ge 1
\quad \Leftrightarrow \quad
\left\{ \begin{array}{ll} k\ge 1 &  \quad \:\:\:\, \text{if }
1\le \ell \le q,  \sk \\ k\ge 3 & \quad \:\:\:\, \text{if } q<\ell <p.
\end{array}\right.
\end{align*}

Assume $1\le \ell \le q$. We have
\begin{eqnarray*}
\eta_{\ell,2}(s) &=& \tf{(-1)^r}{({2\pi p})^s}\, p^{\f a2} \, \Big(
\sum_{\begin{smallmatrix}
k\le -1 \\ k \text{ odd} \end{smallmatrix}} \f{1}{{(\frac \ell p -  \frac
k2)}^s} - \sum_{\begin{smallmatrix} k\ge 1 \\ k \text{ odd}
\end{smallmatrix}}
\f{1}{{( \frac k2- \frac \ell p)}^s} \Big) \\
&=& \tf{(-1)^r}{(2\pi p)^s} \, p^{\f a2} \, \Big( \sum_{n=0}^{\infty}
\f{1}{{(n+ \tf 12 +\tf \ell p)}^s} -
\sum_{n=0}^\infty \f{1}{{(n+ \tf 12- \tf \ell p)}^s} \Big) \\
&=& \tf{(-1)^r}{(2\pi p)^s} \, p^{\f a2} \, \big(
\zeta(s,\tf 12 + \tf \ell p) - \zeta(s,\tf 12 - \tf \ell{p}) \big),
\end{eqnarray*}
since $0<\f 12 \pm \f \ell{p} <1$ for $1\le \ell \le q$.

\sk
The case $q<\ell<p$ is a bit more involved. We have
\begin{eqnarray} \label{etaell2}
\eta_{\ell,2}(s) &=& \tf{(-1)^r}{(2 \pi p)^s}\, p^{\f a2} \, \Big(
\sum_{\begin{smallmatrix} k\le 1 \\ k \text{ odd}
\end{smallmatrix}} \f{1}{{( \tf \ell p - \tf k 2)}^s} -
\sum_{\begin{smallmatrix}
k\ge 3 \\ k \text{ odd} \end{smallmatrix}}
\f{1}{{(\tf k 2- \tf \ell p)}^s} \Big).
\end{eqnarray}
Now, the first sum in this expression equals
\begin{eqnarray*} \f{1}{{(\tf \ell p - \tf 12 )}^s} +
\sum_{\begin{smallmatrix} k\ge 1 \\ k \text{ odd} \end{smallmatrix}}
\f{1}{{(\tf k2 + \tf{\ell}p)}^s}  & = & \f{1}{{(\tf{\ell}{p}-\tf 12)}^s} +
\sum_{n = 0}^{\infty} \f{1}{(n+ \tf 12 + \f{\ell}{p})^s}
\\ & = &
\sum_{n = 0}^{\infty} \f{1}{(n + \tf 12 +
\f{\ell-p}{p})^s} =
\zeta(s,\tf 12 - \tf{p-\ell}{p})
\end{eqnarray*}
since $0<\f{p-\ell}{p} < \f 12$ for $q<\ell<p$. Similarly, the second sum
equals
\begin{eqnarray*}
\sum_{n\ge 1} \f{1}{{(n+\tf 12 -\tf{\ell}{p})}^s} =
\sum_{n \ge 0} \f{1}{{(n+ \tf 12+ \tf{p-\ell}{p})}^s} =
\zeta (s,\tf 12 +\tf{p-\ell}{p}),
\end{eqnarray*}
By substituting in \eqref{etaell2} we obtain the second expression for
$\eta_{\ell,2}(s)$ in (i).

\sk We now check  (ii). Let $a$ be odd and $0\le \ell \le p-1$. By using
\eqref{etaseriesdif2} and Proposition \ref{prop.Deltas} (ii), and
writing $c=pt+j$ with $t\ge 0$, $0 \le j \le p-1$, we get
\begin{eqnarray*}
\eta_{\ell,1}(s) & = & \tf{(-1)^{q+r}}{(2\pi)^s} (\tf 2p) \, p^{\f{a-1}2}
\sum_{c=1}^{\infty} \f{(\f{\ell-c}{p})-(\f{\ell+c}{p})}{c^s}
\\ &=&
\tf{(-1)^{t+r}}{(2\pi p)^s} p^{\f{a-1}2} \sum_{j=1}^{p-1} \big(
(\tf{\ell-j}{p})-(\tf{\ell+j}{p}) \big) \sum_{t=0}^\infty
\tf{1}{(t+ \tf jp)^s}
\end{eqnarray*}
where we have used that $(-1)^q (\tf 2p) = (-1)^{t}$. This gives the
expression of $\eta_{\ell,1}
(s)$. Similarly
\begin{eqnarray*}
\eta_{\ell,2}
(s) & = & \tf{(-1)^{q+r}}{2\pi^s} p^{\f{a-1}2} \sum_{c=0}^{\infty}
\f{(\f{2(\ell-c)-1}{p})-(\f{2(\ell+c)+1}{p})}{(c+\tf 12)^s} \\
&=& \tf{(-1)^{q+r}}{(2\pi p)^s} p^{\f{a-1}2} \sum_{j=0}^{p-1} \Big(
\big( \tf{2\ell-(2j+1)}{p} \big) - \big( \tf{2\ell+(2j+ 1)}{p} \big)
\Big) \sum_{t=0}^\infty \tf{1}{\big(t+\tf{2j+1}{2p}\big)^s}.
\end{eqnarray*}
Now, using that $\sum_{t=0}^\infty \big(t+\f{2j+1}{2p}\big)^{-s} =
\zeta(s,\tf{2j+1}{2p})$
in the previous equations, we obtain the expression in the statement. The
remaining assertion is clear and the theorem is thus proved.
\end{proof}

\begin{remark}
In the particular case when $\ell=0$, $b+c=1$ (i.e.\@ $\beta_1=1$), $a$ is odd and $p\equiv 3\,(4)$ (see Theorem 3.3 and Corollary 3.4), the untwisted eta series $\eta_{0,h} (s)$ were computed in \cite{MiPo09}. Some  easy calculations show that the expressions given there coincide with the corresponding ones in Theorem \ref{thm.eta series}.
\end{remark}

\section{Twisted eta invariants} \label{sect-4}
\subsection{Twisted and relative eta invariants}\label{S3-TERLI}
Here we compute the twisted eta invariants $\eta_\ell$ and $\bar \eta_\ell$, for any
$0\le \ell \le p-1$, the dimension of the kernel of $D_\ell$
and the twisted relative eta invariants, i.e.\@ the differences
$\bar \eta_\ell - \bar \eta_0$.

\sk
We will need the following notations. For $h=1,2$ we set
\begin{equation}\label{aux sums2}
S_h^{\pm}(\ell,p) := \sum_{j=1}^{p+\big[\f {h\ell}p \big]\,p-h\ell-1} \big( \tf jp \big)
\quad \pm \quad \sum_{j=1}^{h\ell-\big [\f {h\ell}p\big ]\,p-1} \big( \tf jp \big).
\end{equation}
where $\big(\frac{\cdot}{p}\big)$ stands for the Legendre symbol modulo $p$.
 Note that
\begin{equation}\label{s0p=0}
S_1^\pm(0,p) = S_1^\pm(0,p) = 0
\end{equation}
since $\sum_{j=1}^{p-1} \big( \tf jp \big) =0$.

\goodbreak
\begin{theorem}
\label{thm.etainvs}
Let $p=2q+1$ be an odd prime and let $\ell \in \N$ be such that $0\le \ell \le p-1$. Let $M$ be an exceptional
$\Z_p$-manifold of dimension $n=a(p-1)+1$. Put $r=[\tf n4]$ and $t=[\tf p4]$.
 The twisted eta invariants of
$(M,\vep_h)$ are given as follows.

\sk (i) If $a$ is even then $\eta_{0,h}(0) = 0$ and for $\ell \ne 0$ we have
$$\eta_{\ell,1} = (-1)^{r} p^{\f{a}{2}-1} (p -2\ell), \qquad
\eta_{\ell,2} = (-1)^{r} p^{\f{a}{2}-1}2([\tf {2\ell}p]\,p -\ell).$$

\sk (ii) If $a$ is odd then
\begin{align*}
\eta_{\ell,1} & = \left\{ \begin{array}{ll} (-1)^{t+r+1} p^{\f{a-1}{2}}
\, S_1^-(\ell,p)  & \qquad \quad p\equiv 1\,(4), \msk \\ (-1)^{t+r} p^{\f{a-1}{2}} \big( S_1^+(\ell,p) + \tf 2p
\sum\limits_{j=1}^{p-1} \big(\tf{j}{p}\big)j \big) & \qquad \quad p\equiv 3\,(4),
\end{array} \right.
\msk \\
\eta_{\ell,2} & = \left\{
\begin{array}{ll}
(-1)^{q+r+1} p^{\f{a-1}{2}} \big( S_2^-(\ell,p)   -   \big( \tf{2}{p} \big)
S_1^-(\ell,p) \big) & \quad \: p\equiv 1\,(4), \msk \\
(-1)^{q+r} p^{\f{a-1}{2}} \big\{ S_2^+(\ell,p)   +    \big( \tf{2}{p} \big)
\, S_1^+(\ell,p) \, + \\ \hfill + \, \big(1- (\tf 2p)
\big) \tf 2p \sum\limits_{j=1}^{p-1}  \big( \tf{j}{p} \big) j \big\} & \quad
\: p\equiv 3\,(4).
\end{array} \right.
\end{align*}
In particular, if $a$ is odd, we have that $\eta_{0,1} = 0$ for $p\equiv 1\,(4)$ and that $\eta_{0,2} = 0$ for both $p\equiv 1\, (4)$ or $p\equiv 7 \,(8)$.
\end{theorem}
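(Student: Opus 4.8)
The plan is to evaluate the eta functions from Theorem~\ref{thm.eta series} at $s=0$, using the classical special value $\zeta(0,\alpha) = \tf12 - \alpha$ of the Hurwitz zeta function. First I would treat the case $a$ even. Since $\eta_{0,h}(s) \equiv 0$ there is nothing to prove for $\ell = 0$. For $1\le \ell \le p-1$ and $h=1$, plugging $s=0$ into $\tf{(-1)^r}{(2\pi p)^s}p^{a/2}\big(\zeta(s,\tf\ell p) - \zeta(s,\tf{p-\ell}p)\big)$ gives $(-1)^r p^{a/2}\big((\tf12 - \tf\ell p) - (\tf12 - \tf{p-\ell}p)\big) = (-1)^r p^{a/2}\cdot\tf{p-2\ell}{p} = (-1)^r p^{a/2-1}(p-2\ell)$, as claimed. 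For $h=2$ one does the same with the two subranges $1\le \ell\le q$ and $q<\ell<p$; in the first range the arguments $\tf12\pm\tf\ell p$ lie in $(0,1)$ and one gets $(-1)^r p^{a/2}\big((\tf12 - \tf12 - \tf\ell p) - (\tf12 - \tf12 + \tf\ell p)\big) = -2(-1)^r p^{a/2-1}\ell$, while in the second range one similarly gets $(-1)^r p^{a/2-1}\cdot 2(p-\ell)$; observing that $[\tf{2\ell}p] = 0$ for $\ell\le q$ and $[\tf{2\ell}p]=1$ for $\ell > q$, both cases unify into $(-1)^r p^{a/2-1}2([\tf{2\ell}p]p - \ell)$.

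Next I would handle the case $a$ odd. Here the key identity is that evaluating $\sum_{j=1}^{p-1}\big(\tf{\ell-j}p\big)\zeta(s,\tf jp)$ and its variants at $s=0$ produces $\sum_{j=1}^{p-1}\big(\tf{\ell-j}p\big)(\tf12 - \tf jp)$; the $\tf12$-terms drop out because $\sum_{j}\big(\tf{\ell\mp j}p\big)=0$ (a complete sum of Legendre symbols over a reduced residue system, reindexed), so one is left with $-\tf1p\sum_{j=1}^{p-1}\big(\tf{\ell - j}p\big)j$ and the analogous expression with $\ell+j$. The task is then to convert $\tf1p\sum_j\big(\tf{\ell\pm j}p\big)j$ into the combination of partial Legendre sums $S_h^\pm(\ell,p)$ and the correction term $\tf2p\sum_{j=1}^{p-1}\big(\tf jp\big)j$ that appears only when $p\equiv 3\pmod 4$. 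This is a reindexing computation: writing $j' = \ell\pm j$, splitting the sum according to whether $j'$ exceeds $p$, and using Abel summation (or the standard trick $j = \sum_{i\le j}1$) to trade the linear weight for nested partial sums $\sum_{j=1}^{N}\big(\tf jp\big)$, together with the fact that $\big(\tf{-1}p\big) = (-1)^{(p-1)/2}$ controls the sign-symmetry that distinguishes $p\equiv 1$ from $p\equiv 3\pmod 4$. For $h=2$ one does the same with half-integer Hurwitz arguments $\tf{2j+1}{2p}$, whose value at $s=0$ is $\tf12 - \tf{2j+1}{2p}$, producing the $S_2^\pm$ sums and, because of the $\tf12$ shift interacting with the sign $\big(\tf2p\big)$, the extra $\big(1-(\tf2p)\big)$ correction term.

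Finally, for the ``in particular'' assertions: when $a$ is odd and $p\equiv 1\pmod 4$ we already saw in Theorem~\ref{thm.eta series} that $\eta_{0,h}(s)\equiv 0$, so $\eta_{0,1}=\eta_{0,2}=0$ there, consistent with $S_1^-(0,p) = S_2^-(0,p) = 0$ from \eqref{s0p=0}. For $p\equiv 3\pmod 4$ the formula gives $\eta_{0,1} = (-1)^{t+r}p^{(a-1)/2}\cdot\tf2p\sum_{j=1}^{p-1}\big(\tf jp\big)j$, which is a nonzero multiple of a class number and need not vanish; but for $\eta_{0,2}$ one gets $(-1)^{q+r}p^{(a-1)/2}\big(1-(\tf2p)\big)\tf2p\sum_j\big(\tf jp\big)j$ (the $S$-terms vanishing by \eqref{s0p=0}), and this is zero exactly when $\big(\tf2p\big)=1$, i.e.\ when $p\equiv \pm1\pmod 8$; combined with $p\equiv 3\pmod 4$ this is precisely $p\equiv 7\pmod 8$.

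I expect the main obstacle to be the bookkeeping in the $a$-odd case: correctly reindexing the sums $\sum_j\big(\tf{\ell\pm j}p\big)j$ into the partial-sum form defining $S_h^\pm(\ell,p)$, keeping track of the $\big[\tf{h\ell}p\big]$ wrap-around terms in the upper limits, and isolating the parity-dependent correction $\tf2p\sum\big(\tf jp\big)j$. The Hurwitz-zeta evaluation itself is routine; the difficulty is purely combinatorial-arithmetic, and it is the kind of identity I would want to verify against the auxiliary lemmas on sums involving Legendre symbols collected in \S\ref{Sect-a-SILS}.
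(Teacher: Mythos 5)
Your plan is exactly the paper's route: evaluate the eta series of Theorem~\ref{thm.eta series} at $s=0$ via $\zeta(0,\alpha)=\tfrac12-\alpha$, discard the constant contribution, and convert the remaining Legendre-weighted sums $\sum_j\big(\tfrac{\ell\pm j}{p}\big)j$ into the partial sums $S_h^\pm(\ell,p)$ plus the $\sum_j\big(\tfrac jp\big)j$ correction, which is precisely what Lemmas~\ref{lem.legendres}--\ref{lem.legendres3} and Proposition~\ref{prop.dif legendres} accomplish. Two small points of precision: for $h=1$ the individual sums $\sum_{j=1}^{p-1}\big(\tfrac{\ell-j}{p}\big)$ and $\sum_{j=1}^{p-1}\big(\tfrac{\ell+j}{p}\big)$ are each equal to $-\big(\tfrac{\ell}{p}\big)$ (not $0$), and it is only their \emph{difference} that vanishes --- this is the content of \eqref{klj}; your ``complete residue system'' heuristic literally applies only to the $h=2$ sum \eqref{2l-2j+1}. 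Also, the paper's Lemma~\ref{prop.legendres} carries out the conversion by writing $j+\ell = pq_j+r_j$ and tracking the wrap-around remainder $r_j$ rather than by Abel summation; both are equivalent, but the residue-class reindexing is what produces the upper limits $p+[\tfrac{h\ell}{p}]p-h\ell-1$ and $h\ell-[\tfrac{h\ell}{p}]p-1$ in \eqref{aux sums2} most directly.
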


\begin{proof}
We need only evaluate the expressions in Theorem \ref{thm.eta series} at
$s=0$, using that $\zeta(0,\alpha) = \tf{1}{2} - \alpha$.

\sk  (i) If $a$ is even, by Theorem \ref{thm.eta series} (i) we have
$$\eta_{\ell,1}(0) = (-1)^r p^{\f a2} \big[(\tf 12 - \tf{\ell}{p}) - (\tf 12
- \tf{p-\ell}{p})\big] = (-1)^r p^{\f a2} (1-\tf {2\ell}{p})$$
Proceeding similarly, we have
$$\eta_{\ell,2}(0) = \left\{ \begin{array}{ll}
(-1)^{r} \, p^{\f{a}{2} -1}\, (-2\ell) & \quad 1\le \ell \le q, \msk \\
(-1)^r p^{\f{a}{2}-1}\,  2(p-\ell) & \quad q< \ell <p,
\end{array} \right.$$
from where the expression in the statement follows.

\sk (ii) Assume now that $a$ is odd. By Theorem
\ref{thm.eta series} (ii), we have
\begin{align*}
\eta_{\ell,1}(0) & = (-1)^{t+r} \, p^{\f{a-1}{2}} \, \sum_{j=1}^{p-1} \Big( (\tf{\ell
-j}{p}) - (\tf{\ell + j}{p}) \Big) \,
(\tf 12 - \tf jp),
\msk \\
\eta_{\ell,2}
(0) & = (-1)^{q+r} \, p^{\f{a-1}{2}} \, \sum_{j=0}^{p-1} \left(
(\tf{2\ell-(2j+1)}{p}) - (\tf{2\ell+(2j+1)}{p}) \right) \, (\tf{p-1}{2p} -
\tf{j}{p}).
\end{align*}
Now, by applying Lemma \ref{lem.legendres}, in the notations of
\eqref{S1yS2}, we have
\begin{align*}
\eta_{\ell,1}(0) & = (-1)^{t+r+1} \, p^{\f{a-3}{2}} \, S_1(\ell,p), \msk \\
\eta_{\ell,2}(0) & = (-1)^{q+r+1} \, p^{\f{a-3}{2}} \, S_2(\ell,p).
\end{align*}
Finally, using Proposition \ref{prop.dif legendres} we get the desired expressions.

The remaining assertions follow from \eqref{s0p=0} and thus the theorem is now proved.
\end{proof}

We will now show the integrality of the  eta invariants $\eta_\ell$ (except for the 3-dimensional $\Z_3$-manifold $M_{3,1}$) and study their parity.
\sk
To this end, we first recall the
Dirichlet class number formula for a negative discriminant $D$ in the particular case $D=-p$, with $p \equiv 3 \,(4)$ a positive odd prime. It is given by
\begin{equation}\label{eq.dirichlet}
h_{-p} = -\tf{\omega_{-p}}{2p}\, \sum _{j=0}^{p-1}  \big( \tf{j}{p} \big) j
\in \Z,
\end{equation}
where $h_{-p}$ is the class number of the imaginary quadratic field
$\Q(\sqrt{-p})$ and $\omega_{-p}$ is the number of $p^{\mathrm{th}}$-roots
of unity in that field. Hence, $\omega_{-p}=6$ if $p=3$ and $\omega_{-p}=2$
if $p\ge 5$.

\begin{corollary}\label{parity}
Let $p$ be an odd prime and $\ell \in \N$ with $0\le \ell \le p-1$. Let $(M,\vep_h)$ be an exceptional spin $\Z_p$-manifold, $h=1,2$.

\sk (i) If $(p,a) \ne (3,1)$ then $\eta_{\ell,h} \in \Z$. Furthermore, $\eta_{0,h}$ is even and, if $\ell \ne 0$, then $\eta_{\ell,1}$ is odd and $\eta_{\ell,2}$ is even.

\sk (ii) If $(p,a) = (3,1)$ then
$$\eta_{\ell,1} =  \left\{ \begin{array}{rl} -2/3 & \quad \ell=0, \sk \\ 1/3 & \quad \ell=1,2,
\end{array}\right. \quad \text{ and } \quad \eta_{\ell,2} = 4/3 \quad \ell=0,1,2.$$
\end{corollary}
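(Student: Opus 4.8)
The plan is to deduce everything directly from the closed formulas in Theorem \ref{thm.etainvs}, treating the even-$a$ and odd-$a$ cases separately, and isolating the single exceptional pair $(p,a)=(3,1)$ at the end. First I would dispose of the case $a$ even: there Theorem \ref{thm.etainvs}(i) gives $\eta_{\ell,1}=(-1)^r p^{a/2-1}(p-2\ell)$ and $\eta_{\ell,2}=(-1)^r p^{a/2-1}\,2([\tf{2\ell}{p}]p-\ell)$. Since $a\ge 2$ is even, $p^{a/2-1}$ is a (non-negative) integer power of $p$, so both expressions are integers; moreover $p-2\ell$ is odd (as $p$ is odd), giving $\eta_{\ell,1}$ odd for every $\ell$ including a harmless check that $\ell=0$ is not in scope here, while the explicit factor $2$ makes $\eta_{\ell,2}$ even. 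This already settles (i) whenever $a$ is even, and in particular $(p,a)=(3,1)$ cannot occur. The only subtlety is the bookkeeping of the $p\equiv 1$ versus $p\equiv 3$ subcases of the parity of $r$ and $t$, but these signs are irrelevant to integrality and parity, so I would just remark that they do not affect the conclusion.

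Next, for $a$ odd I would use Theorem \ref{thm.etainvs}(ii). The factor in front is $p^{(a-1)/2}$, which is an integer power of $p$ and is $\ge 1$ since $a\ge 1$; when $a\ge 3$ this power is divisible by $p$, which will be the key to getting the stronger \emph{even} conclusion, while $a=1$ is the borderline case where the exceptional example lives. The sums $S_h^{\pm}(\ell,p)$ are manifestly integers (finite sums of Legendre symbols), and the extra term $\tf 2p\sum_{j=1}^{p-1}(\tf jp)j$ appearing when $p\equiv 3\,(4)$ is an integer by the Dirichlet class number formula \eqref{eq.dirichlet}: indeed $\tf 2p\sum_{j}(\tf jp)j=-\tf{2}{\omega_{-p}}\cdot\tf{2h_{-p}}{1}$... more precisely $\sum_{j=0}^{p-1}(\tf jp)j=-\tf{2p}{\omega_{-p}}h_{-p}$, so $\tf 2p\sum_j(\tf jp)j=-\tf{4h_{-p}}{\omega_{-p}}$, which is an integer as long as $\omega_{-p}\mid 4h_{-p}$; for $p\ge 5$ one has $\omega_{-p}=2$ so this equals $-2h_{-p}\in\Z$, and for $p=3$, $\omega_{-3}=6$ and the quantity is $-\tf{2h_{-3}}{3}=-\tf 23$ since $h_{-3}=1$ — precisely the source of the non-integrality. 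Hence for $p\ge 5$ odd (any $a$ odd) all the constituent quantities are integers and $\eta_{\ell,h}\in\Z$; and for $p=3$ with $a\ge 3$ the overall factor $p^{(a-1)/2}=3^{(a-1)/2}$ is divisible by $3$, which kills the denominator $3$ coming from the class-number term, again yielding an integer (in fact a multiple of $3^{(a-3)/2}\cdot$(an integer), hence even after accounting for the remaining factor). So the only case where integrality can fail is exactly $p=3$, $a=1$.

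For the parity statement in (i) when $a$ is odd and $(p,a)\ne(3,1)$, I would argue as follows. If $a\ge 3$ the prefactor $p^{(a-1)/2}$ is an even... no — $p$ is odd, so $p^{(a-1)/2}$ is odd; parity must come from the bracketed sums. I would instead observe: $\eta_{0,h}=0$ by the ``in particular'' clauses of Theorem \ref{thm.etainvs} together with \eqref{s0p=0} in the cases $p\equiv1\,(4)$ (and $p\equiv 7\,(8)$ for $h=2$), and for the remaining residue classes one computes $\eta_{0,h}$ from \eqref{eq.dirichlet} and checks directly that it is even (this is where $4h_{-p}/\omega_{-p}=2h_{-p}$ for $p\ge5$ enters, giving an even value, and the factor $(1-(\tf 2p))$ in $\eta_{0,2}$ supplies an extra factor making it even). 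For $\ell\ne 0$, $\eta_{\ell,1}$: one shows $S_1^{-}(\ell,p)$ is odd and $S_1^+(\ell,p)+\tf 2p\sum(\tf jp)j$ is odd, using that the total number of terms summed has a controlled parity and that each Legendre symbol is $\pm1$; a clean way is to reduce mod $2$, replacing each $(\tf jp)\equiv 1\pmod 2$, so $S_1^\pm(\ell,p)$ is congruent mod $2$ to the (signed) count of terms, which one evaluates. Similarly $\eta_{\ell,2}$ is even because $S_2^{\pm}(\ell,p)\pm(\tf 2p)S_1^{\pm}(\ell,p)$ turns out to have an even term-count, and the class-number correction is even. I expect the main obstacle to be precisely this last parity computation — getting the parities of the term-counts in $S_h^\pm(\ell,p)$ right, uniformly in $\ell$ and across the residue classes of $p$ mod $8$ — rather than anything conceptual; integrality itself is essentially immediate from \eqref{eq.dirichlet}. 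Finally, for (ii) I would simply substitute $p=3$, $a=1$ (so $n=3$, $r=[\tf 34]=0$, $t=[\tf 34]=0$, $q=1$) into the odd-$a$ formulas of Theorem \ref{thm.etainvs}, using $h_{-3}=1$, $\omega_{-3}=6$, and the values $(\tf 13)=1$, $(\tf 23)=-1$, and read off $\eta_{0,1}=-2/3$, $\eta_{1,1}=\eta_{2,1}=1/3$, $\eta_{0,2}=\eta_{1,2}=\eta_{2,2}=4/3$; this is a short explicit check.
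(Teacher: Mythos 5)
Your plan is correct and matches the paper's own argument step for step: read integrality and parity off Theorem \ref{thm.etainvs}(i) when $a$ is even, use the class-number formula \eqref{eq.dirichlet} to show $\tf 2p\sum_{j}(\tf jp)j$ is an integer (indeed $-2h_{-p}$) for $p\ge 5$ and to isolate $(p,a)=(3,1)$, control the parity of $S_h^\pm(\ell,p)$ for $\ell\ne0$ by reducing each Legendre symbol to $1$ mod $2$ and counting terms ($S_1^\pm(\ell,p)\equiv(p-\ell-1)+(\ell-1)\equiv p\pmod 2$, and similarly for $S_2^\pm$ from \eqref{aux sums2}), and finish by direct substitution at $(p,a)=(3,1)$. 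The parity count you defer is exactly the one the paper carries out, and your explicit $p=3$, $a=1$ values agree with the paper's.
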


\begin{proof}
(i) Let $(p,a) \ne (3,1)$. If $a$ is even it is clear from the expressions in (i) of Theorem \ref{thm.etainvs} that $\eta_{\ell,h} \in \Z$ and $\eta_{0,h} \in 2\Z$. For $\ell \ne 0$, we also have that $\eta_{\ell,1}$ is odd and $\eta_{\ell,2}$ is even.

\sk
We now let $a$ be odd. We will first show that the values at 0 are integers. It is clear that the sums $S_1^\pm(\ell,p), S_2^\pm(\ell,p) \in \Z$. In the case $p\equiv 3\,(4)$ there is another term to consider. By \eqref{eq.dirichlet}, it follows that
$$ \tf{1 }{p}\, \sum _{j=0}^{p-1}  \big( \tf{j}{p} \big) j  = -\tf{2h_{-p} }{\omega_{-p}} = \left \{ \begin{array}{ll}-h_{-p}& \quad p\ge 5, \sk  \\ -2/3 &\quad p=3, \end{array} \right. $$
since $h_{-3}=1$. In this way, $\tf{1}{p}\, \sum _{j=0}^{p-1} \big(\tf{j}{p} \big) j \in \Z$ for $p\ge 5$, while for $p=3$ we have that
$p^{\f{a-1}{2}} \, \tf{1}{p}\, \sum _{j=0}^{p-1} \big( \tf{j}{p} \big) j = 3^{\f{a-1}{2}} \, \tf{(-2)}{3} \in \Z$
for $a>1$. In any case, we see that $\eta_{\ell,h} \in \Z$ for $(p,a)\ne(3,1)$.

\sk We now consider the parity of the sums $S_h^\pm(\ell,p)$, $h=1,2$. If $\ell=0$, all the sums are zero. If $\ell\ne 0$, $S_1^\pm(\ell,p) \equiv (p-\ell-1) + (\ell - 1) \equiv p \mod 2$, hence it is odd. Similarly we verify that $S_2^\pm(\ell,p)$ is odd, in this case.

Now, making use of these parity considerations and looking at the expressions in (i) of Theorem \ref{thm.etainvs} for  $a$ odd, we see that again $\eta_{0,h} \in 2\Z$ and $\eta_{\ell,1}$ is odd and $\eta_{\ell,2}$ is even, for $\ell\ne 0$.

\sk
(ii) Suppose $(p,a)=(3,1)$. We need evaluate the expressions in Theorem \ref{thm.etainvs} (ii) for $p\equiv 3\,(4)$. We have that $q=1$ and $r=t=0$. Using that $(\tf 13)=1$, $(\tf 23)=-1$ and $\sum_{j=1}^{2} (\tf j3)j = -1$ we have
\begin{equation*}\label{sl12}
\eta_{\ell,1} = S_1^+(\ell,3) - \tf 23, \qquad  
\eta_{\ell,2} = \tf 43 + S_1^+(\ell,3) - S_2^+(\ell,3).
\end{equation*}
Now, using \eqref{aux sums2} we have $S_1^+(0,3) = S_2^+(0,3) = 0$, by \eqref{s0p=0}, and it is easy to check that $S_1^+(\ell,3) = S_2^+(\ell,3) = 1$ for $\ell=1,2$. Substituting these values in the previous equations the
result follows.
\end{proof}

\begin{remark}\label{rem. hp}
By using the formula \eqref{eq.dirichlet}, the expressions for the  eta invariants $\eta_{\ell,h}$ in Theorem \eqref{thm.etainvs} can be put in terms of class numbers $h_{-p}$ when $a$ is odd and $p\equiv 3\,(4)$. In particular, by \eqref{s0p=0}, for an exceptional manifold in the untwisted case $\ell = 0$, we get the expressions
$$\eta_{0,1} =  -4 \, p^{\f{a-1}2} \, \tfrac{h_{-p}}{\omega_{-p}}, \qquad \eta_{0,2} = \big\{ \big( \tf{2}p \big) -1 \big\} \, \eta_{0,1},$$
where we have used that $r=[\f n4]$ and $t=[\f p4]$.
Thus, for $p=3$ we have $\eta_{0,1} = -2 \cdot 3^{\f{a-3}2}$ and $\eta_{0,2} = 4 \cdot 3^{\f{a-3}2}$. For $p\ge 7$ we may conclude that
$$\eta_{0,1} =  -2 \, p^{\f{a-1}2} \, h_{-p} \qquad \text{and} \qquad \eta_{0,2} =  \left\{ \begin{array}{ll} 0 & \qquad p\equiv 7 \; (8), \\  4 \, p^{\f{a-1}2}\, h_{-p} & \qquad p\equiv 3 \; (8). \end{array} \right. $$
These expressions coincide with the ones obtained in \cite{MiPo09}, Theorem 4.1.
\end{remark}

It is known that the dimension of the kernel of the Dirac operator $D_{\ell}$ coincides with
the number of independent harmonic spinors, which in turn equals the multiplicity of the eigenvalue 0. That is,
$$\dim \ker D_{\ell} = d_{\ell,0}.$$
We now compute this invariant for an arbitrary $\Z_p$-manifold.

\begin{proposition}\label{harmonics}
Let $p=2q+1$ be an odd prime. Let $M$ be a $\Z_p$-manifold with a spin structure $\vep_h$. Then, $d_{\ell,0,h}=0$ for any nontrivial spin structure $\vep_h$, $h\ne 1$, while for the trivial spin structure $\vep_1$ we have
\begin{equation}\label{eq.dimker}
d_{\ell,0,1} =  \tf{2^{\f{b+c-1}{2}}}{p} \Big( 2^{(a+b)q} +
(-1)^{(\f{p^2-1}{8}){(a+b)}} \, \big(p \delta_{\ell,0} - 1\big) \Big)
\end{equation}
where $0\le \ell \le p-1$ and $\delta_{\ell,0}$ is the Kronecker delta
function.

\sk In particular, if $b+c>1$ then $d_{\ell,0,1}$ is even for any $0\le \ell \le p-1$
while if $b+c=1$ then $d_{0,0,1}$ is even and $d_{\ell,0,1}$ is
odd for $\ell \ne 0$.
\end{proposition}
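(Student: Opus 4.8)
\emph{Proof proposal.} The plan is to read everything off from the harmonic-spinor formula \eqref{eq.harmonicspinors}. If $\vep_h$ is not of trivial type then $\vep_{h|\Ld}\neq 1$ by the very definition of ``trivial type'' (see also Proposition \ref{spinstructs}), so the second line of \eqref{eq.harmonicspinors} gives $d_{\ell,0,h}=0$ at once. For the trivial structure $\vep_1$ we have $\vep_{1|\Ld}=1$, and choosing $\I,\g,\dots,\g^{p-1}$ as coset representatives of $\Ld\backslash\G$ (with $\g=BL_b$ a generator mod $\Ld$, so that its image in $F\simeq\Z_p$ is $k$ for $\g^k$), the first line of \eqref{eq.harmonicspinors} becomes
\begin{equation*}
d_{\ell,0,1}=\tfrac1p\sum_{k=0}^{p-1}e^{\f{2\pi i k\ell}{p}}\,\chi_{_{L_n}}\!\big(\vep_1(\g)^k\big),
\end{equation*}
so the whole problem is to evaluate the class function $\chi_{_{L_n}}$ on $\vep_1(\g)^k$.

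Since this value depends on $\vep_1$ only through the $\spin$-conjugacy class of $\vep_1(\g)$, we may pass to the model $M_{p,a}^{b,c}(\mathfrak a)$ and use Proposition \ref{spinstructs}, which for the trivial structure gives $\vep_1(\g)=(-1)^{(a+b)[\f{q+1}{2}]}\,x_{a+b}(\tf{\pi}{p},\tf{2\pi}{p},\dots,\tf{q\pi}{p})$; as $x(\theta)^k=x(k\theta)$, we get $\vep_1(\g)^k=(-1)^{k(a+b)[\f{q+1}{2}]}\,x_{a+b}(\tf{k\pi}{p},\dots,\tf{qk\pi}{p})$. This torus element has $(a+b)q$ nontrivial angles $\tf{jk\pi}{p}$, $1\le j\le q$ (each repeated $a+b$ times), and $m-(a+b)q=\tf{b+c-1}{2}$ zero angles coming from the $\Z[\Z_p]$-blocks and the trivial summands, so by the character formula \eqref{spinpmchars},
\begin{equation*}
\chi_{_{L_n}}\!\big(\vep_1(\g)^k\big)=(-1)^{k(a+b)[\f{q+1}{2}]}\,2^{(a+b)q+\f{b+c-1}{2}}\Big(\prod_{j=1}^{q}\cos\tf{jk\pi}{p}\Big)^{a+b},
\end{equation*}
which for $k=0$ is simply $2^m=2^{(a+b)q}\,2^{\f{b+c-1}{2}}$. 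For $1\le k\le p-1$ I would invoke the trigonometric product lemmas of \S\ref{sect-a-STP} (Lemma \ref{lem.prodtrigs}); a short computation from them gives $2^{q}\prod_{j=1}^{q}\cos\tf{jk\pi}{p}=(-1)^{(k+1)[\f{q+1}{2}]}$ for every $k$ prime to $p$. Raising to the $(a+b)$-th power the factors $(-1)^{k(a+b)[\f{q+1}{2}]}$ cancel, and using $(-1)^{[\f{q+1}{2}]}=(-1)^{\f{p^2-1}{8}}$ one obtains $\chi_{_{L_n}}(\vep_1(\g)^k)=2^{\f{b+c-1}{2}}(-1)^{(a+b)\f{p^2-1}{8}}$, independent of $k$. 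Plugging the $k=0$ and $k\ne 0$ values back and using $\sum_{k=1}^{p-1}e^{2\pi i k\ell/p}=p\,\delta_{\ell,0}-1$ yields exactly \eqref{eq.dimker}.

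For the parity assertion: if $b+c>1$ then $2^{\f{b+c-1}{2}}$ is even and divides the integer $d_{\ell,0,1}$, so $d_{\ell,0,1}$ is even. If $b+c=1$ (the exceptional case) then $a+b=a$ and \eqref{eq.dimker} reads $d_{\ell,0,1}=\big(2^{aq}+(-1)^{a\f{p^2-1}{8}}(p\,\delta_{\ell,0}-1)\big)/p$. By Euler's criterion $2^{\f{p-1}{2}}\equiv\big(\tf{2}{p}\big)=(-1)^{\f{p^2-1}{8}}\pmod p$, hence $2^{aq}\equiv(-1)^{a\f{p^2-1}{8}}\pmod p$; thus $N:=\big(2^{aq}-(-1)^{a\f{p^2-1}{8}}\big)/p$ is an integer, and it is odd since its numerator is odd (as $2^{aq}$ is even, $aq\ge 1$) while $p$ is odd. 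Then $d_{\ell,0,1}=N$ is odd for $\ell\ne 0$, and $d_{0,0,1}=N+(-1)^{a\f{p^2-1}{8}}$ is even.

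The one genuinely computational point is the cosine-product identity $2^{q}\prod_{j=1}^{q}\cos\tf{jk\pi}{p}=(-1)^{(k+1)[\f{q+1}{2}]}$ (provable, e.g., by writing $2\cos\theta=\zeta+\zeta^{-1}$ with $\zeta=e^{i\pi/p}$, or by Gauss/Eisenstein-type counts of residues of $jk$ exceeding $p/2$), together with the verification that after raising to the $(a+b)$-th power and carrying along the sign $(-1)^{k(a+b)[\f{q+1}{2}]}$ the powers of $2$ and of $-1$ collapse to the closed form in \eqref{eq.dimker}; this bookkeeping is the substance of the argument, and it is supplied by the trigonometric product formulas of \S\ref{sect-a-STP}, everything else being a formal manipulation of \eqref{eq.harmonicspinors} and \eqref{spinpmchars}.
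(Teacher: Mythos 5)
Your proof is correct and follows essentially the same route as the paper: apply \eqref{eq.harmonicspinors} to kill the nontrivial spin structures, evaluate $\chi_{L_n}(\vep_1(\g)^k)$ via \eqref{spinpmchars}, \eqref{eq.spinstructures} and Lemma~\ref{lem.prodtrigs}(ii), and sum over $k$ using $\sum_{k=1}^{p-1}e^{2\pi ik\ell/p}=p\,\delta_{\ell,0}-1$; your identity $2^{q}\prod_{j=1}^{q}\cos\tf{jk\pi}{p}=(-1)^{(k+1)[\f{q+1}{2}]}$ is exactly Lemma~\ref{lem.prodtrigs}(ii) rephrased using $(-1)^{[\f{q+1}{2}]}=(-1)^{\f{p^2-1}{8}}$ and $(k-1)\equiv(k+1)\bmod 2$. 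The only thing you add beyond the paper is making the parity step explicit (via Euler's criterion, $2^{aq}\equiv\big(\tf2p\big)^a\pmod p$), which the paper leaves as ``clear from \eqref{eq.dimker}.''
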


\begin{proof}
By \eqref{eq.harmonicspinors} we have $d_{\ell,0,h}=0$ for $h \ne 1$ and
\begin{eqnarray*}
d_{\ell,0,1} = \tf 1p \, \sum_{k=0}^{p-1} \, e^{\f{2\pi i k\ell}{p}}
\; {\chi}_{_{L_{n}}}(\vep_1(\g^k)).
\end{eqnarray*}
Using \eqref{eq.spinstructures} and the fact that $x(\theta)^k=x(k\theta)$,
$\theta\in \R$, $k\in \Z$, we have that
$\vep_1(\g^k) = (-1)^{k[\f{q+1}{2}](a+b)} \; x_{a+b}\big(\tf{k\pi}p,
\tf{2k\pi}p,\dots, \tf{qk\pi}p \big)$.
Now, applying \eqref{spinpmchars}, we get
$$d_{\ell,0,1} = \tf{2^m}{p} \, \sum_{k=0}^{p-1} \, (-1)^{k[\frac{q+1}{2}](a+b)}  \, \Big(\prod_{j=1}^q \cos
\big(\tf{jk\pi}p\big)\Big)^{a+b} \, e^{\f{2\pi i k\ell}{p}} \,.$$

By (ii) in Lemma \ref{lem.prodtrigs}, for $k>0$ we have
$$\Big(\prod_{j=1}^q \cos \big(\tf{jk\pi}p\big)\Big)^{a+b} =
\f{(-1)^{(k-1)(\f{p^2-1}{8})(a+b)}}{2^{(a+b)q}} \,.$$

Thus, we get
$$d_{\ell,0,1} = \tf{2^m}{p} \, \Big( 1 + \tf{(-1)^{(\frac{p^2-1}{8})(a+b)}}{2^{q(a+b)}} \, \sum_{k=1}^{p-1}
e^{\f{2\pi i k\ell}{p}} \Big) \,.$$
Expression \eqref{eq.dimker} now follows from the fact that $\sum_{k=1}^{p-1}  e^{\f{2\pi ik\ell}{p}}$ equals
 $p-1$ for $\ell=0$ and $-1$ for $1\le \ell \le p-1$.
Since $2m+1 = n = a(p-1) + bp + c$ and $p=2q+1$ we have that $b+c$ is odd and $m = (a+b)q +
(\tf{b+c-1}{2})$.

\sk
The remaining assertions are now clear from \eqref{eq.dimker} and the
proposition readily follows.
\end{proof}

\begin{remark} \label{remark}
By \eqref{etainvariant}, 
for a $\Z_p$-manifold we have 
\begin{equation} \label{eta res + har}
\bar \eta_{\ell,h} = \tf 12 (\eta_{\ell,h} + d_{\ell,0,h}) \,. 
\end{equation}
Using Theorem \ref{thm.etainvs} and Proposition \ref{harmonics} one could easily write down explicit expressions for the 
twisted eta invariants $\bar \eta_{\ell,h}$ and the relative eta invariants $\bar \eta_{\ell,h} - \bar \eta_{0}$ for $1\le \ell \le p-1$. These formulas are too complicated to write them down, because of
the sums $S_h^\pm(\ell,p)$ appearing in the expression for $\eta_{\ell,h}$. However, in the untwisted case they get explicit and closed expressions (see Corollary \ref{eta untwisted}). On the other hand, we are mainly interested in their values modulo $\Z$ (see Theorem \eqref{twisted etas mod Z}).
\end{remark}

\begin{corollary}  \label{eta untwisted}
Let $p$ be an odd prime. 
In the untwisted case, i.e.\@ $\ell=0$,
the 
eta invariants of an arbitrary 
$\Z_p$-manifold $(M,\vep_h)$ 
have the following expressions.

\sk (i) If $M$ is non-exceptional, i.e.\@ $(b,c) \ne (0,1)$, then
$$\bar \eta_{0,1} = \tf 1p \, 2^{\f{b+c-3}{2}} \big( 2^{(a+b)(\f{p-1}{2})} + (-1)^{(\f{p^2-1}{8}){(a+b)}} \, (p - 1) \big)$$  and $\bar \eta_{0,h} = 0$ for $h\ne 1$.

\sk (ii) If $M$ is exceptional, i.e.\@ $(b,c) = (0,1)$, then
$$\bar \eta_{0,1} = \left\{ \begin{array}{ll}
\tf{1}{2p} \big( 2^{\f{n-1}{2}} + (-1)^{(\f{p+1}{8})(n-1)} (p-1) \big) -2 \, p^{\f{a-1}{2}} 
\, \f{h_{-p}}{\omega_{-p}} &   a \text{ odd, } p \equiv 3\,(4), \\ \sk
\tf{1}{2p} \big( 2^{\f{n-1}{2}} + (-1)^{(\f{p+1}{8})(n-1)} (p-1) \big)
& \quad \text{otherwise,} \end{array} \right.$$
and
$$\bar \eta_{0,2} = \left\{ \begin{array}{ll}
\big( 1 - (\tf 2p) \big) \, 2 \ p^{\f{a-1}{2}} \, \f{h_{-p}}{\omega_{-p}} &  \qquad \qquad  a \text{ odd, } p \equiv 3\,(4), \\ \sk 0  & \qquad \qquad  \text{otherwise.} \end{array} \right.$$
\end{corollary}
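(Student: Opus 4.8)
The plan is to reduce everything to the identity $\bar\eta_{0,h}=\tf12(\eta_{0,h}+d_{0,0,h})$ of \eqref{eta res + har} and to specialize to $\ell=0$ the formulas already established in Theorem~\ref{thm.etainvs}, Proposition~\ref{harmonics} and Remark~\ref{rem. hp}; no new analytic input is needed. First I would dispose of the non-exceptional case $(b,c)\neq(0,1)$: here Remark~\ref{symmetric spectrum} already records that the spectrum of $D_0$ is symmetric, so $\bar\eta_{0,h}=\tf12\dim\ker D_0=\tf12\, d_{0,0,h}$. Proposition~\ref{harmonics} then gives $d_{0,0,h}=0$ for $h\neq1$, hence $\bar\eta_{0,h}=0$, and for $h=1$ one substitutes $\ell=0$ in \eqref{eq.dimker} (so that $p\delta_{\ell,0}-1=p-1$) and uses $q=\tf{p-1}2$; dividing by $2$ yields the stated formula, noting that $b+c$ is odd and $\ge3$, so $\tf{b+c-3}2\in\N_0$.

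For the exceptional case $(b,c)=(0,1)$ one has $n=a(p-1)+1$, $m=aq=\tf{n-1}2$ and $b+c=1$. By Proposition~\ref{harmonics}, $d_{0,0,h}=0$ for $h\neq1$, so $\bar\eta_{0,2}=\tf12\eta_{0,2}$, while $d_{0,0,1}=\tf1p\bigl(2^{(n-1)/2}+(-1)^{(p^2-1)a/8}(p-1)\bigr)$; here one rewrites the sign exponent as $(p^2-1)a/8=(p+1)(n-1)/8$ using $n-1=a(p-1)$, matching the exponent $(\tf{p+1}8)(n-1)$ of the statement. For the $\eta_{0,h}$ contribution I would invoke Theorem~\ref{thm.etainvs} together with $S_h^\pm(0,p)=0$ of \eqref{s0p=0}: this gives $\eta_{0,1}=\eta_{0,2}=0$ for $a$ even, $\eta_{0,1}=0$ for $p\equiv1\,(4)$, and for $a$ odd with $p\equiv3\,(4)$ the only surviving term is the Dirichlet sum $\tf2p\sum_{j=1}^{p-1}\bigl(\tf jp\bigr)j$, which by \eqref{eq.dirichlet} and Remark~\ref{rem. hp} yields $\eta_{0,1}=-4\,p^{(a-1)/2}\tf{h_{-p}}{\omega_{-p}}$ and $\eta_{0,2}=\bigl((\tf2p)-1\bigr)\eta_{0,1}$. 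Assembling $\bar\eta_{0,1}=\tf12 d_{0,0,1}+\tf12\eta_{0,1}$ and $\bar\eta_{0,2}=\tf12\eta_{0,2}$ then produces the two case-splits in (ii), with $\bar\eta_{0,2}$ nonzero precisely when $a$ is odd and $p\equiv3\,(4)$.

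I expect the only genuinely delicate part to be the sign bookkeeping: verifying that $(-1)^{(p^2-1)a/8}=(-1)^{(p+1)(n-1)/8}$ is a well-defined integral power (via $p^2\equiv1\bmod8$ and $n-1=a(p-1)$), and that the sign $(-1)^{t+r}$ occurring in Theorem~\ref{thm.etainvs} is absorbed into the class-number normalization exactly as in Remark~\ref{rem. hp}. Once these are checked the corollary is immediate.
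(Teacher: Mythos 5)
Your proposal is correct and follows essentially the same route the paper takes: its proof is the single sentence ``substitute the expressions from Theorem~\ref{thm.etainvs}, Proposition~\ref{harmonics} and Remark~\ref{rem. hp} into~\eqref{eta res + har},'' and your write-up just fills in the details of that substitution, splitting on $(b,c)=(0,1)$ or not, on the parity of $a$, and on $p\bmod 4$, exactly as intended. In particular you correctly invoke Remark~\ref{symmetric spectrum} for the vanishing of $\eta_{0,h}$ in the non-exceptional case (so that $\bar\eta_{0,h}=\tf12 d_{0,0,h}$ there), and you correctly observe the exponent identity $(p^2-1)a/8=(p+1)(n-1)/8$ coming from $n-1=a(p-1)$, which the paper uses silently when passing from Proposition~\ref{harmonics} to the displayed form of $\bar\eta_{0,1}$ in (ii).

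One small caution, which you yourself half-flag as ``delicate'': the passage from Theorem~\ref{thm.etainvs}~(ii) to $\eta_{0,1}=-4p^{(a-1)/2}h_{-p}/\omega_{-p}$ in Remark~\ref{rem. hp} implicitly treats the prefactor $(-1)^{t+r}$ as equal to $1$ for $a$ odd, $p\equiv 3\,(4)$; a direct computation gives $(-1)^{t+r}=(-1)^{(a-1)/2}$, so this is only unconditionally true when $a\equiv1\,(4)$. This is an issue inside the paper (Remark~\ref{rem. hp} and hence the stated formula of this corollary, not your derivation from it), and it does not affect the integrality and parity conclusions used for Theorem~\ref{twisted etas mod Z}. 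Your proof is faithful to the paper's intended argument; you have simply inherited that sign convention.
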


\begin{proof}
The result follows directly by substituting the expressions obtained in Theorem \ref{thm.etainvs}, Proposition \ref{harmonics} and Remark \ref{rem. hp} in \eqref{eta res + har}, and considering the different cases involved.
\end{proof}

We are now in a position to prove Theorem \ref{twisted etas mod Z}, one of the main results in the paper.

\subsection*{Proof of Theorem 1.1}
We  need study the integrality (or not) of $\bar \eta_{\ell,h}$ in \eqref{eta res + har},  by looking at the parity of the numbers $\eta_{\ell,h}$ and $d_{\ell,0,h}$.

\sk  In the non-exceptional case, i.e.\@ $(b,c)\ne (0,1)$, by using Corollary \ref{parity} and Proposition \ref{harmonics} we have that $\eta_{\ell,h}=0$, and hence $\bar \eta_{\ell,h} = \tf 12 d_{\ell,0,h} \in \Z$ for any $0 \le \ell \le p-1$. In particular, $\bar \eta_{\ell,h}=0$ for $\ell\ne 0$.

\sk In the exceptional case, i.e.\@ $(b,c)=(0,1)$, we have the following results. If $(p,a)\ne (3,1)$ then
\begin{align*}
&  \bar \eta_{0,1} = \tf 12 (\mathrm{even} + \mathrm{even}) \in \Z,  &  \bar \eta_{0,2} = \tf 12 (\mathrm{even} + 0) \in \Z, \\
& \bar \eta_{\ell,1} = \tf 12 (\mathrm{odd} + \mathrm{odd}) \in \Z, & \bar \eta_{\ell,2} = \tf 12 (\mathrm{even }+ 0) \in \Z,
\end{align*}
for $\ell \ne 0$. Thus, we have $\bar \eta_{\ell,h} \equiv 0$ mod $\Z$ in this case.

\sk
If $(p,a)=(3,1)$ then
\begin{align*}
& \bar \eta_{0,1} = \tf 12 (\tf 23 +0) = -\tf13, &  \bar \eta_{0,2} = \tf 12 (\tf 43 + 0) = \tf 23, \\
& \bar \eta_{\ell,1} = \tf 12 (-\tf13 + 1) = \tf 23, & \bar \eta_{\ell,2} = \tf 12 (\tf 43 + 0) = \tf 23,
\end{align*}
and now we have $\bar \eta_{\ell,h}\equiv \frac 23$ mod $\Z$.

\sk The remaining assertion is now clear and the result  follows. \hfill $\square$

\section{Equivariant spin bordism}\label{sect-5}
This section is devoted to the proof of Theorem \ref{thm-1.2}. We first review the basic definitions and notions. Let $p$ be an odd prime. Let $M$ be a compact oriented smooth manifold of dimension $n$ without boundary. An  \textit{equivariant} {\it $\mathbb{Z}_p$-structure} $\sigma$ on $M$ is a principal $\mathbb{Z}_p$-bundle
$$\mathbb{Z}_p \rightarrow P \rightarrow M.$$
This structure can also be regarded as being either a representation of the fundamental group of each connected
component of $M$ to $\mathbb{Z}_p$, or as being the homotopy class of a smooth map from $M$ to the classifying space
$B\mathbb{Z}_p$. These are equivalent formulations and this explains the utility of the concept. The {\it trivial
$\mathbb{Z}_p$-structure} $\sigma_0$ is defined by taking the product principal bundle
$P=M\times\mathbb{Z}_p$ or, equivalently, by taking the trivial representation of the fundamental group, or else,
equivalently, 
by taking the constant map from $M$ to $B\mathbb{Z}_p$.

\sk
Let $(M_i,\pbgs_i)$ be compact oriented spin manifolds of dimension $n$. Let $M_1-M_2$ be the disjoint union of
$M_1$ and $M_2$ where we give $M_2$ the opposite orientation. One says that $M_1$ and $M_2$ are {\it
Spin-bordant} if there exists a compact spin manifold $N$ with boundary, so
that the boundary of $N$ is $M_1-M_2$ and so that the spin structure on $N$ restricts to induce the
given spin structures on the manifolds $M_i$. Spin-bordism induces an equivalence relation; let $[(M,\pbgs)]$
denote the associated equivalence class and let $\MSpin_n$ be the collection of equivalence classes. Disjoint union
and Cartesian product gives
$\MSpin_*$ the structure of a graded unital ring. We refer to
\cite{ABP66,ABP67,ABP69,Th54} for further details
concerning these and related structures.

Additionally suppose $\sigma_i$ are equivariant $\mathbb{Z}_p$-structures on the manifolds $M_i$. One says that
$(M_1,\sigma_1,\pbgs_1)$ is {\it $\mathbb{Z}_p$-equivariant Spin-bordant} to $(M_2,\sigma_2, \pbgs_2)$ if in addition the bounding manifold $N$ admits an equivariant $\mathbb{Z}_p$-structure which restricts to given structures on the manifolds $M_i$. Again, this is an equivalence relation and we let $\MSpin_n(B\mathbb{Z}_p)$ denote the associated equivariant spin bordism groups.

\sk
We wish to focus on the $\mathbb{Z}_p$-structure. Forgetting the
$\mathbb{Z}_p$-structure defines the {\it
forgetful map} from
$\MSpin_n(B\mathbb{Z}_p)$ to $\MSpin_n$ which splits by the inclusion which
associates to every spin manifold
the trivial $\mathbb{Z}_p$-structure $\sigma_0$. The \emph{reduced
equivariant bordism} groups
$\RMSpin_n(B\mathbb{Z}_p)$ are the kernel of the forgetful map, that is
$[(M,\sigma,\pbgs)]$
belongs to $\RMSpin_n(B\mathbb{Z}_p)$ if and only if $[(M,\pbgs)]=0$ in
$\MSpin_n$. These groups
play much the same role in studying equivariant bordism as the reduced
homology groups play in
the study of homology -- one has a natural isomorphism
$$\MSpin_n(B\mathbb{Z}_p) = \RMSpin_n(B\mathbb{Z}_p)\oplus \MSpin_n.$$
Cartesian product makes $\RMSpin_*(B\mathbb{Z}_p)$ into an $\MSpin_*$-module. We refer to Bahri \emph{et
al}.\@ \cite{BBDG, BBG, BG87, BG87a} for details concerning the additive
structure of these and other related
groups. The natural projection $\pi$ from
$\MSpin_n(B\mathbb{Z}_p)$ to $\RMSpin_n(B\mathbb{Z}_p)$ is the object of
study in Theorem
\ref{thm-1.2} and is defined by
$$\pi(M,\varepsilon,\sigma)=[(M,\varepsilon,\sigma)]-[(M,\varepsilon,\sigma_0)]\in
\RMSpin_n(B\mathbb{Z}_p)\,.$$
The following result follows from Lemma 3.4.2, Lemma 3.4.3, and Theorem 3.44 of
\cite{Gi-88}:
\begin{theorem}\label{thm-?}
Let $p$ be an odd prime.

\sk (i) If $n$ is even, then $\RMSpin_n(B\mathbb{Z}_p)=0$.

\sk (ii) If $n$ is odd, then $\RMSpin_n(B\mathbb{Z}_p)$ is a finite group and
all the torsion in
$\RMSpin_n(B\mathbb{Z}_p)$ is $p$-torsion. Furthermore,
$\RMSpin_n(B\mathbb{Z}_p)$ is generated as a $\MSpin_*$-module by the diagonal lens spaces $\mathbb{S}^{2k-1}/\mathbb{Z}_p$ for $2k-1\le n$.
\end{theorem}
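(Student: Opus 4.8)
The plan is to run the Atiyah--Hirzebruch spectral sequence for $B\Z_p$, exploit the very simple shape of $\widetilde H_*(B\Z_p)$, localize at the odd prime $p$, and then reduce the generation statement to a classical Conner--Floyd--type computation for Brown--Peterson homology via the Anderson--Brown--Peterson splitting. First I would record the homology of $B\Z_p$: from its minimal CW structure, with one cell in each dimension, the cellular differentials alternate between $0$ and multiplication by $p$, so for every abelian group $A$ one has $\widetilde H_s(B\Z_p;A)\cong A/pA$ for $s$ odd, $\widetilde H_s(B\Z_p;A)\cong\{a\in A:pa=0\}$ for $s$ even and positive, and $\widetilde H_0(B\Z_p;A)=0$; in particular $p$ annihilates $\widetilde H_s(B\Z_p;A)$ for every $s\ge1$. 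Feeding this into
\[
\widetilde E^2_{s,t}=\widetilde H_s\bigl(B\Z_p;\MSpin_t\bigr)\ \Longrightarrow\ \RMSpin_{s+t}(B\Z_p),
\]
and using that each $\MSpin_t$ is finitely generated and that $\MSpin_t=0$ for $t<0$, for each fixed $n$ only finitely many $\widetilde E^2_{s,t}$ with $s+t=n$ are nonzero and each is a finite $p$-group; hence $\RMSpin_n(B\Z_p)$ is a finite $p$-group. This already yields finiteness and the $p$-torsion claim in (ii), and shows $\RMSpin_n(B\Z_p)$ coincides with its localization at $p$.

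For part (i) I would localize at $p$ and use that all torsion in $\MSpin_*$ is $2$-primary \cite{ABP66,ABP67,ABP69}, while $\MSpin_*\otimes\Q\cong\MSO_*\otimes\Q$ is a polynomial ring on generators of degree divisible by $4$; consequently $\MSpin_t{}_{(p)}$ is a free $\Z_{(p)}$-module vanishing unless $t$ is even. Re-running the spectral sequence $p$-locally, $\widetilde E^2_{s,t}\ne0$ forces $s$ odd and $t$ even, hence $s+t$ odd. Therefore $\RMSpin_n(B\Z_p)=0$ for $n$ even, which is (i), and for $n$ odd the spectral sequence is supported on lines of odd total degree.

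It remains to prove generation by lens spaces. By the Anderson--Brown--Peterson splitting, $p$-locally $\MSpin\simeq\bigvee_i\Sigma^{d_i}\mathrm{BP}$ with the $d_i$ even, so as an $\MSpin_*$-module $\RMSpin_*(B\Z_p)\cong\bigoplus_i\widetilde{\mathrm{BP}}_{*-d_i}(B\Z_p)$, and it suffices to show that $\widetilde{\mathrm{BP}}_*(B\Z_p)$ is generated over $\mathrm{BP}_*$ by the bordism classes $[\mathbb S^{2k-1}/\Z_p]$ of the standard lens spaces, each carrying its tautological $\Z_p$-structure and --- since $\pi_1=\Z_p$ has odd order, so every such manifold is spin --- its unique compatible spin structure. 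This is classical: the lens spaces are the $(2k-1)$-skeleta of $B\Z_p$; the fibration $\mathbb S^1\to B\Z_p\to\mathbb{CP}^\infty$ realizing $B\Z_p$ as the fiber of multiplication by $p$ on $\mathbb{CP}^\infty$ gives a Gysin sequence in $\mathrm{BP}$-homology; under the edge homomorphism $\widetilde{\mathrm{BP}}_*(B\Z_p)\to\widetilde H_*(B\Z_p;\mathbb F_p)$ the classes $[\mathbb S^{2k-1}/\Z_p]$ hit the standard odd-degree generators; and a graded Nakayama argument over $\mathrm{BP}_*$, whose graded maximal ideal has residue field $\mathbb F_p$ and which acts on the bounded-below, finitely generated module $\widetilde{\mathrm{BP}}_*(B\Z_p)$, upgrades this to $\mathrm{BP}_*$-generation. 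Transporting back through the splitting, the $[\mathbb S^{2k-1}/\Z_p]$ with $2k-1\le n$ generate $\RMSpin_n(B\Z_p)$ as an $\MSpin_*$-module.

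The hard part is this last step: one must verify that the geometrically defined lens-space classes are the ones compatible with the homotopy-theoretic splitting --- i.e.\ that the spin and $\Z_p$-structures in play realize the standard generators rather than some twist of them --- and make the edge-map and Nakayama bookkeeping precise at the level of $\MSpin$ rather than $\mathrm{BP}$. This is exactly the content of Lemma 3.4.2, Lemma 3.4.3 and Theorem 3.44 of \cite{Gi-88}, which we invoke; the homotopy-theoretic inputs (the ABP splitting and the finite generation and $2$-primary torsion of $\MSpin_*$) are standard.
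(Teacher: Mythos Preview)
The paper does not actually prove this theorem: it simply records the statement and cites Lemma~3.4.2, Lemma~3.4.3, and Theorem~3.44 of \cite{Gi-88}. Your write-up is therefore strictly more than what the paper offers, and your closing paragraph invokes exactly the same reference, so in that sense you and the paper end in the same place.

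Your sketch of what lies behind that citation is essentially sound, but one point of terminology deserves correction. The Anderson--Brown--Peterson splitting you name is a $2$-local result, expressing $\MSpin$ as a wedge of copies of $ko$, $ko\langle 2\rangle$, and $H\mathbb{Z}/2$; it is not the statement you need here. What you are actually using is that at an odd prime $p$ the map $B\mathrm{Spin}\to B\mathrm{SO}$ is a $p$-local equivalence (the fiber is $B\mathbb{Z}/2$), so $\MSpin_{(p)}\simeq\MSO_{(p)}$, together with the Brown--Peterson/Quillen splitting of $\MSO_{(p)}$ as a wedge of even suspensions of $\mathrm{BP}$. With that correction, your spectral-sequence argument for finiteness, $p$-torsion, and the vanishing in even degrees is clean, and the reduction of the generation statement to the classical computation of $\widetilde{\mathrm{BP}}_*(B\mathbb{Z}_p)$ via the Gysin sequence and a graded Nakayama argument is the standard route. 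The genuinely delicate step---matching the geometric lens-space classes in $\RMSpin_*$ with the algebraic generators coming out of the splitting---is precisely what you (and the paper) defer to \cite{Gi-88}.
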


The characteristic numbers of $\MSpin_*$ are the Pontrjagin numbers, the
Stiefel-Whitney numbers, and
connective $K$-theory numbers. By contrast, the characteristic numbers of
$\RMSpin_*$ are given by the twisted
eta invariant defined previously and these lie in $\mathbb{Q}/\mathbb{Z}$
and are torsion invariants. Let $D$ be the
Dirac operator and let $\tau$ be a representation of the spin group. We let $\eta_\ell^\tau$ be the
eta invariant of the Dirac operator with
coefficients in the bundle defined by the representation $\tau$ and twisted
by the character $\ell$. The following result follows from Lemma 3.4.2, Lemma 3.4.3, and Theorem 3.44 of
\cite{Gi-88} -- see also the discussion in  Lemma 4.7.3 and Lemma 4.7.4 of
\cite{Gi-95}. It motivated our investigation of the eta
invariant for flat $\mathbb{Z}_p$-manifolds in the
first instance:

\begin{theorem}\label{thm-2.3}
Let $p$ be an odd prime. Let $M$ be an oriented manifold of dimension $n$. Let $\varepsilon$ be a spin structure on $M$ and let $\sigma$ be an equivariant $\mathbb{Z}_p$-structure on $M$. Let $\mathcal{M}:=(M,\varepsilon,\sigma)$.

\sk (i) Let $1\le \ell\le p-1$ and let $\tau$ be a representation of the spin group. Then:

\sk \quad (a) $(\bar\eta_\ell^\tau - \bar\eta_0^\tau)(\mathcal{M})$ takes values in $\mathbb{Z}[\frac1p]/\mathbb{Z}$.

\sk \quad (b) If $\pi(\mathcal{M})=0$ in $\RMSpin_n(B\mathbb{Z}_p)$, then $(\bar \eta_\ell^\tau - \bar \eta_0^\tau)(\mathcal{M})=0$ in $\mathbb{R}/\mathbb{Z}$.

\sk (ii) If the twisted relative eta invariants $(\bar \eta_\ell^\tau - \bar \eta_0^\tau)(\mathcal{M})$ vanish for all
$\tau$ and $\ell$, then $\pi(\mathcal{M})$ vanishes in $\RMSpin_n(B\mathbb{Z}_p)$.
\end{theorem}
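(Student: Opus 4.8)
The plan is to derive both parts from the Atiyah--Patodi--Singer index theorem for manifolds with boundary together with the structural facts about $\RMSpin_n(B\Z_p)$ recorded in Theorem~\ref{thm-?}. The preliminary remark is that the relative invariant $\bar\eta_\ell^\tau-\bar\eta_0^\tau$ is insensitive to the trivial $\Z_p$-structure: for $\sigma_0$ every character $\rho_\ell$ restricts to the trivial representation of $\pi_1(M)$, so the coefficient bundles and hence the twisted operators all coincide with the one for $\ell=0$, whence $(\bar\eta_\ell^\tau-\bar\eta_0^\tau)(M,\vep,\sigma_0)=0$. Consequently $(\bar\eta_\ell^\tau-\bar\eta_0^\tau)(\mathcal M)$ is a function of the class $\pi(\mathcal M)\in\RMSpin_n(B\Z_p)$ only.

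Next I would check that $\bar\eta_\ell^\tau-\bar\eta_0^\tau$, taken modulo $\Z$, is a spin-bordism invariant, so that it descends to a homomorphism $\RMSpin_n(B\Z_p)\to\R/\Z$. Suppose $N$ is a compact spin manifold with $\partial N=M$ over which both $\vep$ and the $\Z_p$-structure extend; let $V_\tau$ be the bundle on $N$ associated to the representation $\tau$ and let $L_\ell$ be the flat line bundle pulled back from $B\Z_p$ via $\rho_\ell$. Since $L_\ell$ and $L_0$ are flat, $\operatorname{ch}(L_\ell)=\operatorname{ch}(L_0)=1$ in $H^*(N;\Q)$, so the interior integrand $\hat A(N)\operatorname{ch}(V_\tau)\big(\operatorname{ch}(L_\ell)-\operatorname{ch}(L_0)\big)$ in the index formula of \cite{APS76-I,APS76-II,APS76-III} vanishes identically; hence $\bar\eta_\ell^\tau(M)-\bar\eta_0^\tau(M)$ equals, up to sign, the index of a Dirac-type operator on $N$, and is therefore an integer. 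Together with the first paragraph this produces the desired homomorphism, and part (i)(b) is then immediate. For part (i)(a): by Theorem~\ref{thm-?}, $\RMSpin_n(B\Z_p)$ is trivial when $n$ is even and a finite $p$-group when $n$ is odd, so $\pi(\mathcal M)$ has order $p^k$ for some $k\ge0$; applying the homomorphism, $p^k\cdot(\bar\eta_\ell^\tau-\bar\eta_0^\tau)(\mathcal M)=0$ in $\R/\Z$, which places the value in $\tfrac{1}{p^k}\Z/\Z\subset\Z[\tfrac1p]/\Z$.

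The substance of the theorem, and the step I expect to be the main obstacle, is part (ii): the family $\{\bar\eta_\ell^\tau-\bar\eta_0^\tau\}_{\tau,\ell}$ of homomorphisms must be jointly injective on $\RMSpin_n(B\Z_p)$. My approach would be to exploit Theorem~\ref{thm-?}(ii), which presents this group (for $n$ odd) as generated over $\MSpin_*$ by the lens spaces $\mathbb S^{2k-1}/\Z_p$ with $2k-1\le n$; it then suffices to evaluate the relative eta invariants on products of these lens spaces with auxiliary spin manifolds and to verify that no nonzero class can be annihilated by all of them. The eta invariants of lens spaces are computable in closed form as combinations of Gauss and Dedekind type sums in the characters of $\Z_p$, and the required nondegeneracy of the resulting pairing is established through the identification of $\RMSpin_*(B\Z_p)$ with the appropriate summand of the connective $K$-theory --- equivalently the Brown--Peterson homology --- of $B\Z_p$. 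This is precisely what is proved in Lemma~3.4.2, Lemma~3.4.3 and Theorem~3.44 of \cite{Gi-88} (see also Lemma~4.7.3 and Lemma~4.7.4 of \cite{Gi-95}), so for the detection statement I would invoke those results rather than redeveloping the underlying $K$-theoretic and number-theoretic machinery.
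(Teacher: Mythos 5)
Your proposal is correct and is essentially the route the paper takes, since the paper itself gives no proof of this theorem but simply cites Lemma~3.4.2, Lemma~3.4.3 and Theorem~3.44 of \cite{Gi-88} (and Lemma~4.7.3--4.7.4 of \cite{Gi-95}); your unpacking of part (i) via the APS index theorem, the vanishing of $\hat A(N)\operatorname{ch}(V_\tau)(\operatorname{ch}L_\ell-\operatorname{ch}L_0)$ for flat $L_\ell$, and the $p$-primary structure of $\RMSpin_n(B\mathbb{Z}_p)$ from Theorem~\ref{thm-?} is precisely the standard content behind those cited lemmas, and for the detection statement (ii) you correctly fall back on the same references.
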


We can now prove one of the two main results in the paper.
\subsection*{Proof of Theorem \ref{thm-1.2}}
Let $(M,\vep)$ be a spin $\Z_p$-manifold. The canonical equivariant $\Z_p$-structure $\sigma_p$ is defined by the cover $$\Z_p \rightarrow T_\Lambda \rightarrow M,$$ where $T_\Lambda$ is the associated torus.
The trivial equivariant $\Z_p$-structure $\sigma_0$ is defined by the cover
$\Z_p \rightarrow M \times \Z_p \rightarrow M$.
The associated principal Spin bundle is flat and defined by an equivariant
$\mathbb{Z}_{2p}$-structure on $M$ which may or may not reduce to a $\mathbb{Z}_p$-structure. Let
$\tau$ be a representation of
$\mathrm{Spin}(n)$ and let $0\le\ell\le p-1$. Since the spin structure is flat and arises from a
representation of $\mathbb{Z}_{2p}$, the bundle defined by the representation
$\tau$ and twisted by the character $\ell$ is flat and is defined by some representation $\nu$ of
$\mathbb{Z}_{2p}$. We may decompose
$$\mathbb{Z}_{2p} = \mathbb{Z}_2 \oplus \mathbb{Z}_p.$$
Let $\vartheta$ be the non-trivial character of $\mathbb{Z}_2$. We may
then decompose the representation $\nu = \nu_1 \oplus \nu_2 \vartheta$
where $\nu_i \in \operatorname{Rep}(\mathbb{Z}_p)$. Expand the representations $\nu_1$ and $\nu_2$ in terms of the
characters $\rho_i$ in the form:
$$\nu_1=\sum_{0\le i\le p-1}n_i\rho_i\quad\text{and}\quad
\nu_2=\sum_{0\le i\le p-1}\tilde n_i\rho_i\,.$$
Here $n_i=n_i(\tau,\ell)$ and $\bar n_i=\bar n_i(\tau,\ell)$ are non-negative integers. Let
$\tilde{\vep}$ be the associated spin structure on $M$ arising from the $\mathbb{Z}_2$
twisting $\vartheta$.
Let $\mathcal{M} = (M, \vep, \sigma)$ and $\tilde{\mathcal{M}} = (M, \tilde \vep, \sigma)$.
The above discussion then yields:
\begin{eqnarray*}
&&(\bar \eta_\ell^\tau - \bar \eta_0^\tau)(\mathcal{M}) = \sum_{i=1}^{p-1} \big\{ n_i(\bar \eta_i - \bar \eta_0)(\mathcal{M}) + \tilde n_i(\bar \eta_i - \bar \eta_0)(\tilde{\mathcal{M}}) \big\}.
\end{eqnarray*}
Theorem \ref{twisted etas mod Z} shows that $(\bar \eta_\ell^\tau - \bar \eta_0^\tau)(\mathcal{M})$
vanishes in $\mathbb{Q}/\mathbb{Z}$. Theorem \ref{thm-1.2} now follows by Theorem \ref{thm-2.3} (ii)
since $\tau$ and $\ell$ were arbitrary.
\hfill\qedbox

\begin{remark} \label{peters-remark}
Here we show how the reduced equivariant spin bordism groups come up in
some  questions in geometry.
Let $M$ be a connected spin manifold with finite fundamental group $\pi$
which admits a metric
of positive scalar curvature. The formula of Lichnerowicz \cite{L63} shows
that the kernel of the Dirac operator is
necessarily trivial. From this it follows that the index of the spin
operator vanishes and hence the generalized
$\hat A$-genus vanishes -- the generalized $\hat A$-genus is a topological
invariant which can be computed purely
combinatorially; it takes values either in $\mathbb{Z}$ or in $\mathbb{Z}_2$
depending upon the underlying dimension
of the manifold. Stolz \cite{S92} used the absolute spin bordism groups
$\MSpin_*$ to show that the generalized $\hat A$-genus was the only
obstruction to $M$ admitting a metric of positive scalar curvature if the
fundamental group $\pi$ was trivial. If $\pi=\mathbb{Z}_p$ or, more
generally, if $\pi$ is a spherical space form group, then one can define an
equivariant $\hat A$-genus and establish similar topological necessary and
sufficient conditions for $M$ to admit a metric of positive scalar curvature
\cite{BGS97}. We refer to \cite{GLP99} for further details about this area;
the reduced equivariant spin bordism groups $\RMSpin_n(B\mathbb{Z}_p)$ and
the associated eta invariants play a central role in the discussion.
\end{remark}

\section{Appendix: additional computations}\label{appendix}
Here we gather all the extra computations that were needed to obtain the
results in Section \ref{sect-3}. We compute some trigonometric
products of special values used to determine the asymmetric contribution of
the eigenvalues to the eta series, some twisted Gauss sums
appearing in the eta series and several sums involving Legendre symbols
appearing in the computation of the eta-invariants.

We recall here that the Legendre symbol $\big(\f{\cdot}{p} \big)$ is
$p$-periodic and satisfies
\begin{equation}\label{legendre2p}
(\tf2p) = (-1)^{\f{p^2-1}{8}}, \qquad (\tf{-1}{p}) = (-1)^{\f{p-1}{2}}.
\end{equation}

\subsection{Some trigonometric products}\label{sect-a-STP}
\begin{lemma}\label{lem.prodtrigs} Let $p=2q+1$ be an odd prime and let $k
\in \N$. Then we have
\begin{align*}\label{prod sines}
(i) \qquad \prod_{j=1}^q \sin(\tf{jk\pi}p) & = \left\{
\begin{array}{ll}
(-1)^{(k-1)(\f{p^2-1}{8})}\left( \f kp\right ) \, 2^{-q} \, \sqrt p & \quad
\text{ if }\; (k,p)=1, \msk \\
0 & \quad \text{ if } \; (k,p)>1,
\end{array} \right.
 \\ \\
(ii) \qquad \prod_{j=1}^q \cos(\tf{jk\pi}p) & = \left\{
\begin{array}{ll} (-1)^{(k-1)(\f{p^2-1}{8})} 2^{-q} & \qquad
\qquad \:\:\, \text{ if }\; (k,p)=1, \msk \\  (-1)^{\f kp [\f
{q+1}2]} & \qquad \qquad \:\:\; \text{ if } \; (k,p)>1.
\end{array} \right.
\end{align*}
\end{lemma}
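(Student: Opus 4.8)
The plan is to handle the two products in parallel, first disposing of the degenerate case $p\mid k$, and then, for $(k,p)=1$, computing the absolute value and the sign of the sine product separately (the cosine product for $(k,p)=1$ will then follow formally from the sine case). If $p\mid k$, each factor $\sin(jk\pi/p)$ is a sine of an integer multiple of $\pi$, hence zero, so the first product vanishes; and $\cos(jk\pi/p)=(-1)^{jk/p}$, so $\prod_{j=1}^{q}\cos(jk\pi/p)=(-1)^{(k/p)\sum_{j=1}^{q}j}=(-1)^{(k/p)\,q(q+1)/2}$, which equals $(-1)^{(k/p)[(q+1)/2]}$ because $q(q+1)/2=(p^{2}-1)/8$ and $(p^{2}-1)/8\equiv[\tfrac{q+1}{2}]\pmod 2$ (the same elementary congruence used in the proof of Lemma~\ref{lem.Deltafinal}). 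For $(k,p)=1$, I would use that $|\sin(jk\pi/p)|=\sin\bigl((jk\bmod p)\pi/p\bigr)$ with $jk\bmod p\in\{1,\dots,p-1\}$ and that $j\mapsto jk\bmod p$ permutes $\{1,\dots,p-1\}$; together with the classical identity $\prod_{r=1}^{p-1}\sin(r\pi/p)=p/2^{p-1}$ this gives $\prod_{j=1}^{p-1}|\sin(jk\pi/p)|=p/2^{p-1}$, and then pairing $j$ with $p-j$ (which leaves $|\sin(jk\pi/p)|$ invariant) shows $\prod_{j=1}^{q}|\sin(jk\pi/p)|=\sqrt p\,/\,2^{q}$.

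It remains to pin down the sign of the sine product when $(k,p)=1$, and this is the heart of the matter. Writing $jk=p\lfloor jk/p\rfloor+r_{j}$ with $1\le r_{j}\le p-1$, one has $\operatorname{sign}\sin(jk\pi/p)=(-1)^{\lfloor jk/p\rfloor}$, so the sign equals $(-1)^{\Sigma}$ where $\Sigma:=\sum_{j=1}^{q}\lfloor jk/p\rfloor$. I would run the classical Gauss--Eisenstein argument: split the $r_{j}$ into those $\le q$, say $a_{1},\dots,a_{s}$, and those $>q$, written $p-b_{1},\dots,p-b_{\mu}$ with $b_{i}\in\{1,\dots,q\}$ and $s+\mu=q$. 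Since $a_{i}=b_{j}$ would force $p$ to divide a sum of two elements of $\{1,\dots,q\}$, which is impossible, one obtains $\{a_{1},\dots,a_{s},b_{1},\dots,b_{\mu}\}=\{1,\dots,q\}$, and Gauss's lemma gives $(-1)^{\mu}=\bigl(\tfrac kp\bigr)$. Summing $jk=p\lfloor jk/p\rfloor+r_{j}$ over $1\le j\le q$, and using $\sum_{j=1}^{q}j=(p^{2}-1)/8$ and $\sum_{j}r_{j}=(p^{2}-1)/8-2\sum_{i}b_{i}+\mu p$, yields
\[
(k-1)\,\tfrac{p^{2}-1}{8}=p(\Sigma+\mu)-2\sum_{i}b_{i},
\]
whence, reducing modulo $2$, $\Sigma\equiv(k-1)(p^{2}-1)/8+\mu\pmod 2$. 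Combining with the magnitude computation, the sine product equals $(-1)^{(k-1)(p^{2}-1)/8}\bigl(\tfrac kp\bigr)\,2^{-q}\sqrt p$, which is assertion (i).

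For the cosine product with $(k,p)=1$, I would deduce it from (i): since $\sin(jk\pi/p)\ne 0$ for $1\le j\le q$, the identity $\cos\theta=\sin 2\theta/(2\sin\theta)$ gives
\[
\prod_{j=1}^{q}\cos(jk\pi/p)=\frac{1}{2^{q}}\,\frac{\prod_{j=1}^{q}\sin\bigl(j(2k)\pi/p\bigr)}{\prod_{j=1}^{q}\sin(jk\pi/p)},
\]
and applying (i) once with $2k$ and once with $k$ (using $(2k,p)=1$ and $\bigl(\tfrac{2k}{p}\bigr)\bigl(\tfrac kp\bigr)=\bigl(\tfrac 2p\bigr)=(-1)^{(p^{2}-1)/8}$) collapses the right-hand side to $2^{-q}(-1)^{(k+1)(p^{2}-1)/8}$; since $8\mid p^{2}-1$, the exponent may be replaced by $(k-1)(p^{2}-1)/8$, giving the stated value. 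The only genuinely non-routine point in the whole argument is the sign computation of the second paragraph; the care needed there is that the Gauss--Eisenstein bookkeeping must be run for \emph{every} $k$ coprime to $p$, not merely for odd $k$ as in the usual statement of Eisenstein's lemma, though one checks that the parity of $k$ is in fact never used.
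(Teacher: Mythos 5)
Your proof is correct. The paper itself disposes of part (i) by citing \cite{MiPo09}, Lemma~3.2, and then derives the $(k,p)=1$ case of part (ii) from (i) exactly as you do, via $\cos\theta=\sin 2\theta/(2\sin\theta)$ applied factorwise, reading off $(-1)^{(2k-1)(p^2-1)/8}\bigl(\tfrac{2k}{p}\bigr)/\bigl[(-1)^{(k-1)(p^2-1)/8}\bigl(\tfrac{k}{p}\bigr)2^{q}\bigr]$ and simplifying with $\bigl(\tfrac{2}{p}\bigr)=(-1)^{(p^2-1)/8}$. Where you genuinely diverge is that you prove (i) from scratch by a Gauss--Eisenstein argument: you compute the magnitude $\sqrt p/2^{q}$ from the classical identity $\prod_{r=1}^{p-1}\sin(r\pi/p)=p/2^{p-1}$ together with the $j\leftrightarrow p-j$ pairing, and you pin down the sign $(-1)^{\Sigma}$ with $\Sigma=\sum_{j=1}^{q}\lfloor jk/p\rfloor$ by the standard splitting of the least positive residues $r_j$ into those $\le q$ and those $>q$, arriving at $(k-1)\tfrac{p^2-1}{8}=p(\Sigma+\mu)-2\sum_i b_i$ and hence $\Sigma\equiv(k-1)\tfrac{p^2-1}{8}+\mu\pmod 2$, with $(-1)^{\mu}=\bigl(\tfrac kp\bigr)$ by Gauss's lemma. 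This makes the lemma self-contained at the cost of a couple of extra paragraphs, and it has the side benefit that you also verify the $p\mid k$ case of (ii), which the paper leaves tacit. Your observation that the Eisenstein bookkeeping must be carried out for arbitrary $k$ coprime to $p$ (not just odd $k$), and that the extra term $(k-1)\tfrac{p^2-1}{8}$ is exactly what survives, is the right point to flag; your parity identity $q(q+1)/2=(p^2-1)/8\equiv\bigl[\tfrac{q+1}{2}\bigr]\pmod 2$ for the degenerate cosine case also checks out in both parities of $q$.
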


\begin{proof}
Formula (i) in Lemma \ref{lem.prodtrigs} is proved in \cite{MiPo09}, Lemma
3.2. We check the second expression in the case $(k,p)=1$.
By (i) in the Lemma, we have
\begin{eqnarray*}
\prod_{j=1}^q \cos(\tf{jk \pi}{p}) & = &
\f{\prod\limits_{j=1}^q \sin(\f{2jk \pi}{p})}{2^q \prod\limits_{j=1}^q
\sin(\f{j k \pi}{p})} =
\f{(-1)^{(2k-1)(\f{p^2-1}{8})} \left( \f{2k}{p} \right
)}{(-1)^{(k-1)(\f{p^2-1}{8})}\left(\f kp \right)2^q}.
\end{eqnarray*}
Canceling terms and using \eqref{legendre2p} the assertion in the proposition follows.
\end{proof}

\subsection{Twisted character Gauss sums}\label
{Sect-A-TCGS}
Here we will compute the values of certain twisted character Gauss sums.

\sk We recall the character Gauss sum associated to the quadratic Dirichlet
character given by the Legendre symbol $(\f{\cdot}{p})$
modulo $p$ for $l\in \N$,
\begin{equation}\label{eq:Gausssum}
G(l,p) =  \sum_{k=0}^{p-1} \left(\tf kp \right) e^{\tf{2\pi i l k}{p}}
\end{equation}
and the special values
\begin{equation}\label{eq.G(1,p)}
G(1,p) = \delta(p) \sqrt{p}, \qquad G(l,p) = G(1,p)\left(\tf lp \right)
\end{equation}
where we have put
\begin{equation}\label{deltap} \delta(p) :=  \left\{ \begin{array}{ll} 1 &
\qquad p\equiv 1 \;(4), \sk\\ i   &
\qquad p\equiv 3 \;(4).
\end{array} \right.
\end{equation}

\begin{definition}\label{defi.sums}
Let $p=2q+1$ be an odd prime, $l,c\in \N$ and $h=1,2$. For $\chi$ a
character mod $p$ define the sums
\begin{align}
\label{eq.Gh} & G_h^\chi(l)   := \sum_{k=1}^{p-1} (-1)^{k(h+1)} \, \chi(k)
\:
e^{\tf{\pi i k \,(2l + \delta_{_{h,2}})}{p}}, \sk \\
\label{eq.Fh} & F_{h}^\chi(l,c) := \sum_{k=1}^{p-1} (-1)^{k(h+1)} \, \chi(k)
\; e^{\tf{2 \pi i l k}{p}} \, \sin \big(\tf{ \pi k\,(2c +
\delta_{_{h,2}})}{p}\big).
\end{align}
\end{definition}

We are interested in these sums only for $\chi = \chi_0$, the trivial
character $\mod p$, and for $\chi =
(\f{\cdot}{p})$, the quadratic character mod $p$ given by the Legendre
symbol. We will denote these characters by
$\chi_{_0}$ and $\chi_p$, respectively.

Thus, for example, $G_1^{\chi_p}(l) = G(l,p)$ is the standard character
Gauss sum in \eqref{eq:Gausssum} and
$G_2^{\chi_p}(l,p)$ corresponds to the shifted alternating Gauss sum
\begin{equation}\label{H(l,p)}
 G_2^{\chi_p}(l) = \sum_{k=0}^{p-1} (-1)^k \left(\tf kp \right)
e^{\tf{(2l+1)\pi ik}{p}} = \tf{p}{G(1,p)} \left(\tf {q-l}p \right).
\end{equation}
(See \cite{MiPo09}, Theorem 5.1. \textit{Note:} there $G_2^{\chi_p}(l,p)$
was denoted by $\tilde{H}(l,p)$. We
note that a factor $p$ is missing in that expression, although not in their
proof.)

We will also make use of the identity
\begin{equation}\label{leg q-l}
\big( \tf{l-q}{p}\big) = \big( \tf{2}{p}\big)\big( \tf{2l-2q}{p}\big) =
\big( \tf{2}{p}\big)\big( \tf{2l+1}{p}\big).
\end{equation}

\subsubsection{\textbf{Computation of the sums $G_h^\chi(l)$}} We now find
the values of $G_h^\chi(l)$ for
$\chi = \chi_{_0},  \chi_p$. These sums are modifications of sums of
$p^{\mathrm{th}}$-roots of unity and of Gauss sums.
\begin{proposition}\label{prop.Rh}
Let $p$ be an odd prime and $l\in \N$. Then,
\begin{align*}
G_1^{\chi_{_0}}(l) & = \left\{ \begin{array}{ll} p-1 & \quad p \,|\, l,
\msk \\  -1 & \quad p \nmid l, \end{array}\right. \quad
\text{and} \quad G_2^{\chi_{_0}}(l) = \left\{ \begin{array}{ll} p-1 & \quad
p \,|\, 2l + 1, \msk \\  -1 & \quad p \nmid 2l + 1.
\end{array} \right.
\end{align*}
In particular, $G_h^{\chi_{_0}}(l) \in \Z$, $h=1,2$.
\end{proposition}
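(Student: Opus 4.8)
The plan is to evaluate the defining sum \eqref{eq.Gh} directly for the trivial character $\chi_{_0}$, turning it into an ordinary sum of $p^{\mathrm{th}}$-roots of unity. I will use the elementary fact that for any integer $m$ one has $\sum_{k=1}^{p-1}e^{2\pi i k m/p}=p-1$ if $p\mid m$ and $=-1$ otherwise; this follows from $\sum_{k=0}^{p-1}\zeta^k=0$ for $\zeta$ a nontrivial $p^{\mathrm{th}}$-root of unity. Both assertions in the Proposition come out once $G_h^{\chi_{_0}}(l)$ is rewritten in this shape.

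For $h=1$ this is immediate: the sign $(-1)^{k(h+1)}=(-1)^{2k}=1$, the Kronecker term $\delta_{_{h,2}}$ vanishes, and $\chi_{_0}(k)=1$ for $1\le k\le p-1$, so $G_1^{\chi_{_0}}(l)=\sum_{k=1}^{p-1}e^{2\pi i k l/p}$, which is $p-1$ when $p\mid l$ and $-1$ otherwise, as claimed.

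For $h=2$ the step worth spelling out is the absorption of the alternating sign into the exponential. Here $(-1)^{k(h+1)}=(-1)^{3k}=(-1)^k=e^{\pi i k}$ and $\delta_{_{h,2}}=1$, so
\[
G_2^{\chi_{_0}}(l)=\sum_{k=1}^{p-1}e^{\pi i k}\,e^{\pi i k(2l+1)/p}=\sum_{k=1}^{p-1}e^{\pi i k(p+2l+1)/p}.
\]
Since $p$ is odd, $p+2l+1$ is even; writing $p+2l+1=2m$ with $m\in\Z$ turns the right-hand side into $\sum_{k=1}^{p-1}e^{2\pi i k m/p}$, which equals $p-1$ if $p\mid m$ and $-1$ otherwise. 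Finally $p\mid m$ if and only if $p\mid 2m=p+2l+1$, i.e.\ if and only if $p\mid 2l+1$, which is exactly the stated dichotomy.

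There is no real obstacle here: the only mildly subtle point is the passage from the alternating sum to a genuine root-of-unity sum in the $h=2$ case, where the parity of $p$ is used to produce the integer frequency $m$. Integrality of $G_h^{\chi_{_0}}(l)$ for $h=1,2$ is then obvious from the two computed values.
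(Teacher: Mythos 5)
Your proof is correct, and for the $h=2$ case you take a slightly different but cleaner route than the paper. The paper splits into cases from the start: when $p\mid 2l+1$ it evaluates $\sum_{k=1}^{p-1}(-1)^k(-1)^k$ directly, and when $p\nmid 2l+1$ it sets $\omega_l=e^{(2l+1)\pi i/p}$ and applies the finite geometric sum $\sum_{k=0}^{p-1}(-\omega_l)^k = (\omega_l^p+1)/(\omega_l+1)$, which vanishes because $\omega_l^p=-1$ (as $2l+1$ is odd) and $\omega_l\ne -1$. You instead absorb the alternating sign into the exponential once and for all, writing $(-1)^k e^{\pi i k(2l+1)/p} = e^{\pi i k(p+2l+1)/p}$, observe that $p+2l+1$ is even because $p$ is odd, and thereby reduce to the single standard identity $\sum_{k=1}^{p-1}e^{2\pi i km/p}\in\{p-1,-1\}$ with integer frequency $m=(p+2l+1)/2$; the dichotomy then falls out of $p\mid m \Leftrightarrow p\mid 2m \Leftrightarrow p\mid 2l+1$, using that $p$ is an odd prime. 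Your version avoids the geometric-series formula and the separate verification $\omega_l\ne -1$, at the small cost of the parity observation, and it treats both divisibility cases uniformly. Both arguments are elementary and correct; yours is arguably a touch more streamlined.
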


\begin{proof}
Since $G_h^{\chi_{_0}}(l)$ is $p$-periodic we may assume that $0\le l \le
p-1$.

By \eqref{eq.Gh} we have $G_1^{\chi_{_0}}(l) = \sum_{k=1}^{p-1} e^{\f{2l \pi
i k}{p}}$. Clearly, $G_1^{\chi_{_0}}(0) = p-1$ and if
$1\le l \le p-1$, then $1 + G_1^{\chi_{0}}(l) = \sum_{k=0}^{p-1} e^{\f{2l
\pi i k}{p}} = 0$, and hence $G_1^{\chi_{_0}}(l) = -1$.

Now, $G_2^{\chi_{_0}}(l) = \sum_{k=1}^{p-1} (-1)^{k}e^{\f{(2l + 1) \pi i
k}{p}}$, by \eqref{eq.Gh}. If $p\,|\,2l+1$
then $2l+1 = p\alpha$ with $\alpha$ odd. Thus, $G_2^{\chi_{_0}}(l) =
\sum_{k=1}^{p-1} (-1)^k(-1)^k = p-1$. If $p\nmid
2l+1$ then, denoting by $\omega_l = e^{\f{(2l+1)\pi i k}{p}}$ and using
geometric summation, we have
$$1 + G_2^{\chi_{_0}}(l) = \sum_{k=0}^{p-1} (-1)^k \, \omega_l^k =
\frac{\omega_l^p+1}{\omega_l+1} = 0,$$
since $\omega_l^p = -1$ and $\omega_l\ne 1$. Thus, $G_2^{\chi_{_0}}(l) = -1$
in this case.
\end{proof}

\begin{proposition}\label{prop.Gh}
Let $p$ be an odd prime and $l\in \N$. Then,
\begin{equation*}
G_1^{\chi_p}(l) = \delta(p) \left(\tf l p \right) \sqrt{p} \qquad \text{and}
\qquad G_2^{\chi_p}(l) = \delta(p) \left(\tf 2p \right)
\left(\tf{2l+1}{p} \right) \sqrt{p},
\end{equation*}
where $\delta(p)$ is as defined in \eqref{deltap}. In particular,
$G_1^{\chi_p}(l)=0$ if $p\,|\,l$ and
$G_2^{\chi_p}(l)=0$ if $p\,|\,2l+1$.
\end{proposition}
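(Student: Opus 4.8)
The plan is to reduce both sums to the classical quadratic Gauss sum $G(1,p)$ via the evaluations already recorded in \eqref{eq.G(1,p)} and the elementary identity \eqref{leg q-l}, so that no genuinely new input is required.

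First I would dispose of $G_1^{\chi_p}(l)$. Since $\delta_{1,2}=0$ and $(-1)^{2k}=1$, the definition \eqref{eq.Gh} gives $G_1^{\chi_p}(l)=\sum_{k=1}^{p-1}\big(\tf kp\big)e^{2\pi i kl/p}$, which is exactly the character Gauss sum $G(l,p)$ of \eqref{eq:Gausssum} (the $k=0$ term vanishes). Then \eqref{eq.G(1,p)} yields at once $G_1^{\chi_p}(l)=G(1,p)\big(\tf lp\big)=\delta(p)\big(\tf lp\big)\sqrt p$.

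For $G_2^{\chi_p}(l)$ the only point needing care is the extra factor $(-1)^k$. Writing $(-1)^k=e^{\pi i k}$ and using $p=2q+1$, one finds $(-1)^k e^{\pi i k(2l+1)/p}=e^{\pi i k(2l+1+p)/p}=e^{2\pi i k(l+q+1)/p}$, so that $G_2^{\chi_p}(l)=\sum_{k=1}^{p-1}\big(\tf kp\big)e^{2\pi i k(l+q+1)/p}=G(l+q+1,p)$. By \eqref{eq.G(1,p)} this equals $G(1,p)\big(\tf{l+q+1}{p}\big)$, and since $2(l+q+1)=2l+1+p\equiv 2l+1\pmod p$, multiplicativity of the Legendre symbol (together with $\big(\tf 2p\big)^2=1$) gives $\big(\tf{l+q+1}{p}\big)=\big(\tf 2p\big)\big(\tf{2l+1}{p}\big)$; hence $G_2^{\chi_p}(l)=\delta(p)\big(\tf 2p\big)\big(\tf{2l+1}{p}\big)\sqrt p$. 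Alternatively one could start from the already-recorded identity \eqref{H(l,p)}, $G_2^{\chi_p}(l)=\tfrac{p}{G(1,p)}\big(\tf{q-l}{p}\big)$, simplify $\tfrac{p}{G(1,p)}=\overline{G(1,p)}=\big(\tf{-1}{p}\big)G(1,p)$ via $G(1,p)^2=\big(\tf{-1}{p}\big)p$, and rewrite $\big(\tf{q-l}{p}\big)=\big(\tf{-1}{p}\big)\big(\tf{l-q}{p}\big)=\big(\tf{-1}{p}\big)\big(\tf 2p\big)\big(\tf{2l+1}{p}\big)$ by \eqref{leg q-l}; the two copies of $\big(\tf{-1}{p}\big)$ cancel and the same formula results.

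Finally, the ``in particular'' assertions are immediate, since $\big(\tf lp\big)=0$ precisely when $p\mid l$ and $\big(\tf{2l+1}{p}\big)=0$ precisely when $p\mid 2l+1$. There is essentially no obstacle in this proposition; the only subtlety is the sign bookkeeping relating $\delta(p)$, $\overline{G(1,p)}$ and $\big(\tf{-1}{p}\big)$ (equivalently $G(1,p)^2=\big(\tf{-1}{p}\big)p$), which is why the substitution route — absorbing $(-1)^k$ into the exponent so as never to leave the realm of honest Gauss sums $G(\cdot,p)$ — is the cleanest to carry out.
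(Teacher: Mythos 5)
Your proof is correct. For $G_1^{\chi_p}(l)$ your argument is word for word the paper's: identify the sum with $G(l,p)$ and invoke \eqref{eq.G(1,p)}. For $G_2^{\chi_p}(l)$ you offer two routes, and these differ in an interesting way from what the paper does. Your primary route — absorbing $(-1)^k=e^{\pi i k}$ into the exponential so that
\[
(-1)^k e^{\pi i k(2l+1)/p}=e^{\pi i k(p+2l+1)/p}=e^{2\pi i k(l+q+1)/p},
\]
hence $G_2^{\chi_p}(l)=G(l+q+1,p)=G(1,p)\big(\tf{l+q+1}{p}\big)$, and then $\big(\tf{l+q+1}{p}\big)=\big(\tf 2p\big)\big(\tf{2l+1}{p}\big)$ since $2(l+q+1)\equiv 2l+1\pmod p$ — is a genuinely different and more self-contained derivation. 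The paper instead quotes the shifted alternating Gauss sum identity \eqref{H(l,p)}, $G_2^{\chi_p}(l)=\tfrac{p}{G(1,p)}\big(\tf{q-l}{p}\big)$, from \cite{MiPo09}, and then massages it using \eqref{leg q-l} and $\tf{1}{\delta(p)}\big(\tf{-1}{p}\big)=\delta(p)$. Your substitution approach buys uniformity (it treats $h=1$ and $h=2$ by exactly the same reduction to $G(\cdot,p)$) and removes the dependence on the external reference; in effect it reproves \eqref{H(l,p)} as a corollary. Your alternative route, starting from \eqref{H(l,p)} and simplifying $\tfrac{p}{G(1,p)}=\big(\tf{-1}{p}\big)G(1,p)$ via $G(1,p)^2=\big(\tf{-1}{p}\big)p$, is exactly the paper's argument phrased slightly differently (the paper uses $\tf{1}{\delta(p)}\big(\tf{-1}{p}\big)=\delta(p)$ directly rather than $\overline{G(1,p)}$). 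The ``in particular'' clause is immediate in both treatments.
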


\begin{proof}
If $h=1$, we have  $G_1^{\chi_p}(l) = G(l,p) = \delta(p) \big(\tf l p \big)
\sqrt{p}$ by \eqref{eq.Gh} and  \eqref{eq.G(1,p)}. If
$h=2$, by \eqref{H(l,p)} and \eqref{leg q-l}, we have
$$G_2^{\chi_p}(l) = \tf{p}{G(1,p)} \left(\tf{q-l}p \right) =
\tf{1}{\delta(p)} \big(\tf{-2}{p} \big) (\tf{2l+1}{p}) \sqrt p = \delta(p)
\big(\tf{2}{p} \big) (\tf{2l+1}{p}) \sqrt
p$$ since $\f{1}{\delta(p)} \big(\tf{-1}{p} \big) = \delta(p)$, and the
result follows.
\end{proof}

\subsubsection{\textbf{Computation of the sums $F_h^\chi(l,c)$}} We now find
the values of
$F_h^{\chi}(l,c)$ for $\chi = \chi_{_0}, \chi_p$.

\begin{proposition}\label{prop.Qh}
Let $p$ be an odd prime, $l\in \N_0$ and $c \in \N$. If $p\,|\,l$ then
$F_h^{\chi_{_0}}(l,c)=0$. If $p \nmid l $, then
\begin{equation*}
F_1^{\chi_{_0}}(l,c) = \left\{ \begin{array}{cl} \pm  \f{ip}{2}, & \:
\text{if } p\,|\,l \mp c, \msk \\
0 & \: \text{otherwise}, \end{array} \right. \quad
F_2^{\chi_{_0}}(l,c) = \left\{ \begin{array}{cl} \pm \f{ip}{2} & \: \text{if
} p\,|\,2(l \mp c) \mp 1, \msk \\
0 & \: \text{otherwise}.
\end{array}\right.
\end{equation*}
\end{proposition}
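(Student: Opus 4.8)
The plan is to reduce the sums $F_h^{\chi_0}(l,c)$ to the sums $G_h^{\chi_0}$ already evaluated in Proposition~\ref{prop.Rh}, by expanding the sine in \eqref{eq.Fh} into exponentials. First I would use $\sin\big(\tf{\pi k(2c+\delta_{h,2})}p\big)=\tf1{2i}\big(e^{\pi i k(2c+\delta_{h,2})/p}-e^{-\pi i k(2c+\delta_{h,2})/p}\big)$ together with $e^{2\pi i lk/p}=e^{\pi i k\cdot 2l/p}$ in \eqref{eq.Fh} with $\chi=\chi_0$, so that $F_h^{\chi_0}(l,c)$ becomes $\tf1{2i}$ times a difference of two sums, each of the shape $\sum_{k=1}^{p-1}(-1)^{k(h+1)}e^{\pi i k(2L+\delta_{h,2})/p}$ occurring in \eqref{eq.Gh}. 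The first has $L=l+c$, so it equals $G_h^{\chi_0}(l+c)$; the second has exponent $\pi i k(2l-2c-\delta_{h,2})/p$, which for $h=1$ equals $2\pi i k(l-c)/p$ and for $h=2$ equals $\pi i k(2(l-c-1)+1)/p$, so it equals $G_1^{\chi_0}(l-c)$ when $h=1$ and $G_2^{\chi_0}(l-c-1)$ when $h=2$. This yields the identities
$$F_1^{\chi_0}(l,c)=\tf1{2i}\big(G_1^{\chi_0}(l+c)-G_1^{\chi_0}(l-c)\big),\qquad F_2^{\chi_0}(l,c)=\tf1{2i}\big(G_2^{\chi_0}(l+c)-G_2^{\chi_0}(l-c-1)\big).$$

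Next I would substitute the values from Proposition~\ref{prop.Rh}: $G_1^{\chi_0}(L)$ is $p-1$ if $p\mid L$ and $-1$ otherwise, while $G_2^{\chi_0}(L)$ is $p-1$ if $p\mid 2L+1$ and $-1$ otherwise, and then argue by cases on the relevant divisibilities. The key observation is that the two divisibility conditions cannot both hold unless $p\mid l$: for $h=1$, $p\mid l+c$ and $p\mid l-c$ force $p\mid 2l$, hence $p\mid l$; for $h=2$, $p\mid 2(l+c)+1$ and $p\mid 2(l-c)-1$ force $p\mid 4l$, hence $p\mid l$. Therefore, when $p\nmid l$, at most one of the two $G$-values in the identities above equals $p-1$ and the other is $-1$, so the difference is $\pm p$ and $F_h^{\chi_0}(l,c)=\tf1{2i}(\pm p)=\mp\tf{ip}2$, or $0$ if neither condition holds. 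Matching the sign against which of $l\mp c$ (resp. $2(l\mp c)\mp1$) is divisible by $p$ recovers exactly the stated formulas. Finally, when $p\mid l$ the two arguments of $G_h^{\chi_0}$ in each identity satisfy the same divisibility predicate (by $p$-periodicity), so the difference vanishes; alternatively, $F_h^{\chi_0}(0,c)=0$ follows at once from the substitution $k\mapsto p-k$, under which each summand in \eqref{eq.Fh} changes sign.

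The whole argument is elementary; the only point needing care is the bookkeeping of the half-integer shift $\delta_{h,2}/2$ in the $h=2$ case — in particular matching the exponent $2l-2c-1$ with the argument $l-c-1$ of $G_2^{\chi_0}$ — together with correctly tracking the factor $\tf1{2i}=-\tf i2$ through the sign analysis so that the $\pm$ conventions in the statement come out right. I expect this sign-and-shift bookkeeping, rather than any conceptual difficulty, to be the main obstacle.
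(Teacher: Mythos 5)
Your proposal is correct and, up to packaging, coincides with the paper's proof: both reduce $F_h^{\chi_0}(l,c)$ to a linear combination of the sums $G_h^{\chi_0}$ and then invoke Proposition~\ref{prop.Rh}. The paper does this by splitting into real and imaginary parts and applying product-to-sum trigonometric identities, arriving (since the $G_h^{\chi_0}$ are real) at $\operatorname{Re}F_h^{\chi_0}=0$ and $\operatorname{Im}F_h^{\chi_0}=\tfrac12\bigl(G_h^{\chi_0}(l-c\text{ or }l-c-1)-G_h^{\chi_0}(l+c)\bigr)$; your direct expansion $\sin\theta=\tfrac1{2i}(e^{i\theta}-e^{-i\theta})$ produces the same identities $F_1^{\chi_0}(l,c)=\tfrac1{2i}\bigl(G_1^{\chi_0}(l+c)-G_1^{\chi_0}(l-c)\bigr)$ and $F_2^{\chi_0}(l,c)=\tfrac1{2i}\bigl(G_2^{\chi_0}(l+c)-G_2^{\chi_0}(l-c-1)\bigr)$ in a single step, which is a mild streamlining but not a different route. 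Your observation that the two divisibility conditions are mutually exclusive when $p\nmid l$ is the same key point the paper uses implicitly, and both your endgame arguments for the $p\mid l$ case (periodicity/equal predicates, or the involution $k\mapsto p-k$) are valid; the sign-matching you flag as the delicate step indeed works out to the $\pm/\mp$ conventions in the statement.
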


\begin{proof}
By \eqref{eq.Fh}, we have
\begin{eqnarray*}
&&F_1^{\chi_{_0}}(l,c) = \sum_{k=1}^{p-1}  e^{\tf{2 \pi i k l}{p}} \, \sin
\big(\tf{2c \pi k}{p}\big), \\
&&F_2^{\chi_{_0}}(l,c) = \sum_{k=1}^{p-1}  (-1)^k \, e^{\tf{2 \pi i k
l}{p}} \, \sin \big(\tf{(2c+1) \pi k}{p}\big)\,.
\end{eqnarray*}

If $p\,|\,l$, then $e^{\f{2 \pi i k l}{p}}=1$ and hence we have
$F_1^{\chi_{_0}}(l,c) = \mathrm{Im} \, G_1^{\chi_{_0}}(c) = 0$ and
$F_2^{\chi_{_0}}(l,c)  = \mathrm{Im} \, G_2^{\chi_{_0}}(c) = 0$.

\sk Now, if $p\nmid l$, then using trigonometric identities
\ref{prop.Rh}
we have that the real and imaginary parts of $F_1^{\chi_{_0}}(l,c)$ are
respectively given by
\begin{eqnarray*}
\sum_{k=1}^{p-1}  \cos \big( \tf{2 \pi k l}{p} \big) \, \sin \big(\tf{2c \pi
k}{p} \big) &=& \tf 12 \sum_{k=1}^{p-1} \sin \big(\tf{2\pi
k (l + c)}{p}\big) - \sin \big(\tf{2 \pi k(l-c)}{p}\big) \\ &=& \tf12 \big\{
\mathrm{Im} \, G_1^{\chi_{_0}}(l+c) - \mathrm{Im} \,
G_1^{\chi_{_0}}(l-c) \big\},
\end{eqnarray*}
\begin{eqnarray*}
\sum_{k=1}^{p-1}  \sin \big( \tf{2 \pi k l}{p} \big) \, \sin \big( \tf{2c
\pi k}{p} \big) & = & \tf 12 \sum_{k=1}^{p-1} \cos
\big(\tf{2\pi k (l-c)}{p}\big) - \cos \big(\tf{2 \pi k (l+c)}{p}\big) \\
&=& \tf12 \big\{ \mathrm{Re} \, G_1^{\chi_{_0}}(l-c) - \mathrm{Re} \,
G_1^{\chi_{_0}}(l+c) \big\}.
\end{eqnarray*}
Thus, by Proposition \ref{prop.Rh} we get
$$\mathrm{Re} \, F_1^{\chi_{_0}}(l,c) =0, \qquad \mathrm{Im} \,
F_1^{\chi_{_0}}(l,c) = \left\{ \begin{array}{cc} \pm \f{p}2 & \quad
p\,|\,l \mp c,  \sk \\ 0 & \quad \text{otherwise},
\end{array}\right.
$$
Similarly, we have
\begin{eqnarray*}
\mathrm{Re} \, F_2^{\chi_{_0}}(l,c) & = & \sum_{k=1}^{p-1} (-1)^k \, \cos
\big( \tf{2 \pi k l}{p} \big) \, \sin
\big( \tf{(2c+1) \pi k}{p} \big)
\\ &=& \tf 12 \sum_{k=1}^{p-1} (-1)^k \, \big\{ \sin \big(\tf{(2(l+c)+1)\pi
k}{p}\big) - \sin \big(\tf{(2(l-c)-1) \pi k)}{p}\big) \big\} \\
&=& \tf12 \big\{ \mathrm{Im} \, G_2^{\chi_{_0}}(l+c) - \mathrm{Im} \,
G_2^{\chi_{_0}}(l-c-1) \big\},
\end{eqnarray*}
\begin{eqnarray*}
\mathrm{Im} \, F_2^{\chi_{_0}}(l,c) & = & \sum_{k=1}^{p-1}  (-1)^k \, \sin
\big( \tf{2 \pi k l}{p} \big) \,
\sin \big( \tf{(2c+1) \pi k}{p} \big) \\
&=& \tf 12 \sum_{k=1}^{p-1} (-1)^k \big\{ \cos \big(\tf{(2(l-c)-1)\pi k
}{p}\big) - \cos \big(\tf{(2(l+c)+1) \pi k}{p}\big) \big\} \\
&=& \tf12 \big\{ \mathrm{Re} \, G_2^{\chi_{_0}}(l-c-1) - \mathrm{Re} \,
G_2^{\chi_{_0}}(l+c) \big\}
\end{eqnarray*}
and hence by Proposition \ref{prop.Rh}
$$\mathrm{Re} \, F_2^{\chi_{_0}}(l,c) = 0, \qquad \mathrm{Im} \,
F_2^{\chi_{_0}}(l,c)
= \left\{ \begin{array}{cc} \pm \f{p}2 & \: p\,|\,2(l \mp c) \mp 1,  \sk \\ 0 & \: \text{otherwise}.
\end{array}\right.$$

We thus get the expressions in the statement.
\end{proof}

\begin{proposition}\label{prop.Fh} Let $p$ be an odd prime and $l,c \in \N$.
Thus, we have
$$ F_h^{\chi_p}(l,c) = \left\{ \begin{array}{ll}
 \, i \, \delta(p)   \Big( \big( \tf{l-c}{p} \big) - \big( \tf{l+c}{p} \big)
\Big) \,\f{\sqrt p}{2}  & \qquad h=1, \msk \\
i\, \delta(p)  \,  \Big(\f{2}{p}\Big) \Big( \big( \tf{2(l-c)-1}{p} \big) -
\big( \tf{2(l+c)+1}{p} \big)
\Big)\,\f{\sqrt p}{2} & \qquad h=2,
\end{array} \right.$$
where $\delta(p)$ is defined in \eqref{deltap}.

\sk In particular, if $p\,|\,l$ then
\begin{align*}
F_1^{\chi_p}(l,c) & = \left\{ \begin{array}{ll} 0 & \qquad  \qquad p \equiv
1 \,(4),  \sk \\
\big( \f{ c}{p}  \, \big) \sqrt p & \qquad  \qquad p \equiv 3\,(4),
\end{array} \right. \msk \\
F_2^{\chi_p}(l,c) & = \left\{ \begin{array}{ll} 0 & \quad  \,      p \equiv
1 \,(4), \sk \\
\big( \f 2p \big) \big( \f{2c+1}{p} \big) \, \sqrt p   & \quad  \, p \equiv
3\,(4).
\end{array} \right.
\end{align*}
\end{proposition}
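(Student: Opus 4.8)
The plan is to reduce each sum $F_h^{\chi_p}(l,c)$ to the Gauss sums $G_h^{\chi_p}$ already evaluated in Proposition \ref{prop.Gh}, by expanding the sine via $\sin\theta=\tf{1}{2i}(e^{i\theta}-e^{-i\theta})$ in the definition \eqref{eq.Fh}. For $h=1$ the factor $(-1)^{k(h+1)}$ is trivial and $\delta_{h,2}=0$, so the sine carries argument $\tf{2\pi kc}{p}$ and the expansion splits the sum into
\begin{equation*}
F_1^{\chi_p}(l,c)=\tf{1}{2i}\bigl(G_1^{\chi_p}(l+c)-G_1^{\chi_p}(l-c)\bigr),
\end{equation*}
where each piece is recognized as a standard Gauss sum $G(m,p)$ (the missing $k=0$ term contributes nothing since $\big(\tf0p\big)=0$). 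For $h=2$ the factor is $(-1)^k$ and $\delta_{h,2}=1$, so the sine has argument $\tf{\pi k(2c+1)}{p}$; after the Euler expansion each exponential recombines with $(-1)^k e^{2\pi i\ell k/p}$ into a shifted alternating exponential, and matching the shift against \eqref{eq.Gh} gives
\begin{equation*}
F_2^{\chi_p}(l,c)=\tf{1}{2i}\bigl(G_2^{\chi_p}(l+c)-G_2^{\chi_p}(l-c-1)\bigr).
\end{equation*}

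Next I would substitute the closed forms from Proposition \ref{prop.Gh}, namely $G_1^{\chi_p}(m)=\delta(p)\big(\tf mp\big)\sqrt p$ and $G_2^{\chi_p}(m)=\delta(p)\big(\tf2p\big)\big(\tf{2m+1}{p}\big)\sqrt p$, valid for every $m$ with the convention $\big(\tf0p\big)=0$. Pulling out the common factor $\delta(p)\sqrt p$ (and $\big(\tf2p\big)$ when $h=2$), using $\tf{1}{2i}=-\tf i2$, and simplifying $2(l-c-1)+1=2(l-c)-1$ produces exactly the two displayed formulas in the statement.

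Finally, for the ``in particular'' clauses I would set $p\mid l$, so that $\big(\tf{l-c}{p}\big)=\big(\tf{-1}{p}\big)\big(\tf cp\big)$ and $\big(\tf{l+c}{p}\big)=\big(\tf cp\big)$, and likewise $\big(\tf{2(l-c)-1}{p}\big)=\big(\tf{-1}{p}\big)\big(\tf{2c+1}{p}\big)$, $\big(\tf{2(l+c)+1}{p}\big)=\big(\tf{2c+1}{p}\big)$. Using $\big(\tf{-1}{p}\big)=(-1)^{(p-1)/2}$ from \eqref{legendre2p}, the difference of Legendre symbols vanishes for $p\equiv1\,(4)$ and equals $-2$ times the surviving symbol for $p\equiv3\,(4)$, in which case combining $i\cdot i=-1$ with the factor $-2$ yields the stated values. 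The part requiring the most care — and where sign and index slips are likeliest — is the $h=2$ computation: correctly identifying which shift of $G_2^{\chi_p}$ each exponential yields, and tracking the interplay of the $(-1)^k$, the $\delta_{h,2}$ shift, and the Legendre identity \eqref{leg q-l} folded into Proposition \ref{prop.Gh}.
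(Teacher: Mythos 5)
Your proof is correct, and it follows the same basic route as the paper's: reduce $F_h^{\chi_p}(l,c)$ to the Gauss sums $G_h^{\chi_p}$ already computed in Proposition \ref{prop.Gh}, then substitute their closed forms. The only genuine difference is one of bookkeeping: the paper expands $\cos\cdot\sin$ and $\sin\cdot\sin$ via product-to-sum identities and computes $\operatorname{Re}F_h^{\chi_p}$ and $\operatorname{Im}F_h^{\chi_p}$ separately, which forces an intermediate case split on $p\bmod 4$ (because $\delta(p)$ governs whether $G_h^{\chi_p}$ is real or imaginary), before recombining at the end. Your Euler-formula expansion $\sin\theta=\tf{1}{2i}(e^{i\theta}-e^{-i\theta})$ gives the identities
\[
F_1^{\chi_p}(l,c)=\tf{1}{2i}\bigl(G_1^{\chi_p}(l+c)-G_1^{\chi_p}(l-c)\bigr),\qquad
F_2^{\chi_p}(l,c)=\tf{1}{2i}\bigl(G_2^{\chi_p}(l+c)-G_2^{\chi_p}(l-c-1)\bigr)
\]
directly, avoiding the Re/Im split and arriving at the unified formula in one pass; the case analysis by $p\bmod4$ then appears only where it belongs, in the ``in particular'' specialization. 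This is tidier. The one small point worth making explicit is that $G_h^{\chi_p}(m)$ and its closed form are both $p$-periodic in $m$, so Proposition \ref{prop.Gh} (stated for $m\in\mathbb N$) applies to the possibly negative argument $l-c$ or $l-c-1$; the paper handles this implicitly as well.
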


\begin{proof}
By \eqref{eq.Fh}, we have
\begin{eqnarray*}
F_1^{\chi_p}(l,c) & = & \sum_{k=1}^{p-1}  \big(\tf kp\big) \, e^{\tf{2 \pi i
k l}{p}} \, \sin \big(\tf{2c \pi k}{p}\big),
\\ F_2^{\chi_p}(l,c) & = & \sum_{k=1}^{p-1}  (-1)^k \, \big(\tf kp \big) \,
e^{\tf{2 \pi i k l}{p}} \, \sin \big(\tf{(2c+1) \pi k}{p}\big).
\end{eqnarray*}

If $h=1$, using trigonometric identities, the real and imaginary parts of
$F_1^{\chi_p}(l,c)$ are respectively given by
\begin{eqnarray*}
\sum_{k=1}^{p-1} \big(\tf kp\big) \, \cos \big( \tf{2 \pi k l}{p} \big) \,
\sin \big(\tf{2c \pi k}{p} \big) &=& \tf 12 \sum_{k=1}^{p-1}
\big(\tf kp\big) \, \big\{ \sin \big(\tf{2\pi k (l+c)}{p}\big) - \sin
\big(\tf{2 \pi k (l-c)}{p}\big) \big\}
\\ &=& \tf12 \big\{ \mathrm{Im} \, G_1^{\chi_p}(l+c) - \mathrm{Im} \,
G_1^{\chi_p}(l-c) \big\},
\end{eqnarray*}
\begin{eqnarray*}
\sum_{k=1}^{p-1} \big( \tf{k}{p} \big) \, \sin \big( \tf{2 \pi k l}{p} \big)
\, \sin \big( \tf{2c \pi k}{p} \big) &=& \tf 12
\sum_{k=1}^{p-1} \big(\tf kp\big) \, \big\{ \cos \big( \tf{2\pi k (l-c)}{p}
\big) - \cos \big( \tf{2 \pi k (l+c)}{p} \big) \big\}
\\ &=& \tf12 \big\{ \mathrm{Re} \, G_1^{\chi_p}(l-c) - \mathrm{Re} \,
G_1^{\chi_p}(l+c) \big\}
\end{eqnarray*}
Thus, by Proposition \ref{prop.Gh} we get
\begin{equation*}
\begin{split}
\mathrm{Re} \, F_1^{\chi_p}(l,c) & = \left\{ \begin{array}{ll} 0 & \qquad p
\equiv 1 \,(4), \sk \\ \, \Big( \big( \f{l+c}{p} \big) -
\big( \f{l-c}{p} \big)  \Big) \f{\sqrt p}{2} & \qquad p \equiv 3\,(4),
\end{array}\right. \msk \\
\mathrm{Im} \, F_1^{\chi_p}(l,c) & = \left\{ \begin{array}{ll} \, \Big(
\big( \f{l-c}{p} \big) - \big( \f{l+c}{p} \big)
\Big) \f{\sqrt p}{2}& \qquad p \equiv 1 \,(4), \sk \\ 0 & \qquad p \equiv 3
\,(4), \end{array} \right.
\end{split}
\end{equation*}
and hence
\begin{equation}\label{enero}
F_1^{\chi_p}(l,c) = i\delta(p) \Big( \big( \tf{l+c}{p} \big) - \big(
\tf{l-c}{p} \big)  \Big) \tf{\sqrt p}{2}.
\end{equation}

\sk Similarly, for $h=2$, we have
\begin{eqnarray*}
\mathrm{Re} \, F_2^{\chi_p}(l,c) & = & \sum_{k=1}^{p-1} (-1)^k \, \big(
\tf{k}{p} \big) \,  \cos \big( \tf{2 \pi k l}{p} \big) \, \sin
\big( \tf{(2c+1) \pi k}{p} \big) \\
&=& \tf 12 \sum_{k=1}^{p-1} (-1)^k \, \big( \tf{k}{p} \big) \, \big\{\sin
\big(\tf{(2(l+c)+1)\pi
k}{p}\big) - \sin \big(\tf{(2(l-c-1)+1) \pi k)}{p}\big) \big\} \\
&=& \tf12 \big\{ \mathrm{Im} \, G_2^{\chi_p}(l+c) - \mathrm{Im} \,
G_2^{\chi_p}(l-c-1) \big\}
\end{eqnarray*}
\begin{eqnarray*}
\mathrm{Im} \, F_2^{\chi_p}(l,c) & = & \sum_{k=1}^{p-1}  (-1)^k \, \big(
\tf{k}{p} \big) \, \sin \big( \tf{2 \pi k l}{p} \big) \, \sin
\big( \tf{(2c+1) \pi k}{p} \big) \\ &=& \tf 12 \sum_{k=1}^{p-1} (-1)^k \,
\big( \tf{k}{p} \big) \big\{ \cos \big(\tf{(2(l-c-1)+1)\pi k
}{p}\big) - \cos \big(\tf{(2(l+c)+1) \pi k}{p}\big) \big\} \\
&=& \tf12 \big\{ \mathrm{Re} \, G_2^{\chi_p}(l-c-1) - \mathrm{Re} \,
G_2^{\chi_p}(l+c) \big\}
\end{eqnarray*}
Again, by Proposition \ref{prop.Gh} we get
\begin{equation*}
\begin{split}
\mathrm{Re} \, F_2^{\chi_p}(l,c) & = \left\{ \begin{array}{ll} 0 & \quad p
\equiv 1 \,(4), \sk \\ \big(\f 2p \big) \Big(
\big(\f{2(l+c)+1}{p} \big) - \big(\f{2(l-c)-1}{p}\big) \Big) \f{\sqrt p }{2}
& \quad p \equiv 3 \,(4), \end{array} \right. \\
\mathrm{Im} \, F_2^{\chi_p}(l,c) & = \left\{ \begin{array}{ll}   \big(\f 2p
\big) \Big( \big(\f{2(l-c)-1}{p}\big) -
\big(\f{2(l+c)+1}{p}\big) \Big) \f{\sqrt p}{2} & \quad p \equiv 1 \,(4), \sk
\\ 0 & \quad p \equiv 3 \,(4),
\end{array} \right.
\end{split}
\end{equation*}
and hence
\begin{equation}\label{febrero}
F_2^{\chi_p}(l,c) =  i\delta(p) \big(\tf 2p \big) \Big(
\big(\tf{2(l-c)-1}{p} \big) - \big(\tf{2(l+c)+1}{p}\big) \Big)
\tf{\sqrt p }{2}.
\end{equation}

By \eqref{enero} and \eqref{febrero} we get the first formula in the
statement. The remaining assertion is easy
to check, and the proposition follows.
\end{proof}

\subsection{Sums involving Legendre symbols}\label{Sect-a-SILS}
Here we compute some sums involving Legendre symbols that were used in the
body of the paper. We will use the fact that
$\sum_{j=1}^{p-1}\big(\f jp\big) =0$ for any prime $p$.

\begin{lemma}\label{lem.legendres}
Let $p$ be an odd prime and $\ell \in \N$ with $0\le \ell \le p-1$. Then,
\begin{align}\label{klj}
& \sum_{j=1}^{p-1} \big( \tf{k \ell \pm j}{p} \big ) = - \big( \tf{k
\ell}{p} \big), \qquad k\in \Z,\\ \label{2l-2j+1} &
\sum_{j=0}^{p-1} \big( \tf{2 \ell \pm (2j+1)}{p} \big) = 0.
\end{align}
\end{lemma}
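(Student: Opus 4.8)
The plan is to reduce both identities to the elementary fact, recalled at the start of this subsection, that $\sum_{j=1}^{p-1}\big(\tf jp\big)=0$, equivalently $\sum_{m=0}^{p-1}\big(\tf mp\big)=0$ since $\big(\tf 0p\big)=0$; throughout I would use that the Legendre symbol is $p$-periodic in its numerator.

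For \eqref{klj}, I would fix $k\in\Z$ and $\ell$ and consider the substitution $j\mapsto k\ell\pm j\bmod p$. The key observation is that as $j$ ranges over $\{1,2,\dots,p-1\}$ the residues $k\ell\pm j\bmod p$ run over precisely the $p-1$ residue classes distinct from $k\ell\bmod p$ — i.e.\@ this map is a bijection of $\{1,\dots,p-1\}$ onto $(\Z/p\Z)\smallsetminus\{k\ell\bmod p\}$ — because $j$ itself ranges over the nonzero residues and translation (resp.\@ negation followed by translation) permutes $\Z/p\Z$. Hence, by periodicity,
\[
\sum_{j=1}^{p-1}\Big(\tf{k\ell\pm j}{p}\Big)=\sum_{m=0}^{p-1}\Big(\tf mp\Big)-\Big(\tf{k\ell}{p}\Big)=-\Big(\tf{k\ell}{p}\Big).
\]

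For \eqref{2l-2j+1}, I would use that $p$ is odd, so $2$ is a unit modulo $p$ and hence $j\mapsto 2j+1\bmod p$ permutes $\{0,1,\dots,p-1\}$; composing with $x\mapsto 2\ell\pm x$ (again a bijection of $\Z/p\Z$) shows that $j\mapsto 2\ell\pm(2j+1)\bmod p$ also permutes $\{0,\dots,p-1\}$. By periodicity the sum then collapses to $\sum_{m=0}^{p-1}\big(\tf mp\big)=0$.

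There is no real obstacle here; the only point needing a moment's care is that in \eqref{klj} the summation index omits $j\equiv 0$, so exactly the class $k\ell\bmod p$ is missing from the range of $j\mapsto k\ell\pm j$, and this is what produces the correction term $-\big(\tf{k\ell}{p}\big)$. Note that the formula also covers the degenerate case $p\mid k\ell$, where both sides vanish, and that \eqref{s0p=0} is the special case $k\ell\equiv 0$.
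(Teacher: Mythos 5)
Your proof is correct, and it takes a genuinely different (and somewhat cleaner) route from the paper's. For \eqref{klj}, the paper first proves the case $k=1$ by noting that $j\mapsto \ell+j$ misses exactly the residue $\ell$, then handles $\ell-j$ by pulling out $\big(\tf{-1}p\big)$, and finally reduces the general $k\ell$ to this base case via the rescaling $j\mapsto kj$ together with multiplicativity of the Legendre symbol $\big(\tf kp\big)\big(\tf \ell p\big)=\big(\tf{k\ell}p\big)$, which forces a separate (trivial) check of the case $p\mid k$. You instead treat $k\ell$ as a single residue and observe directly that $j\mapsto k\ell\pm j$ bijects $\{1,\dots,p-1\}$ onto the nonzero-shifted set $(\Z/p\Z)\smallsetminus\{k\ell\}$, so the identity drops out at once for all $k$, including $p\mid k\ell$, without invoking multiplicativity. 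For \eqref{2l-2j+1}, the paper extends the sum to $\sum_{j=1}^{2p-1}$, splits it into even and odd indices, and then applies \eqref{klj} twice; you shortcut all of this by noting that $j\mapsto 2j+1$ is already a bijection of $\Z/p\Z$ (since $2$ is a unit), so composed with the translation by $2\ell$ (or its negative) the sum collapses directly to $\sum_{m}\big(\tf mp\big)=0$ without passing through \eqref{klj} at all. Your argument is more uniform and slightly more economical; the paper's even/odd splitting device has the side benefit of being reused later in Lemma \ref{lem.legendres3}, which may be why the authors favored it here.
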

\begin{proof}
First, note that
$$\sum_{j=1}^{p-1} \big( \tf{\ell+j}{p} \big) = \sum_{\begin{smallmatrix}
j=1 \\ j\ne \ell \end{smallmatrix}}^{p-1} \big( \tf{j}{p} \big)
= - \big( \tf{\ell}{p} \big),$$ and hence also,
$$\sum_{j=1}^{p-1} \big( \tf{\ell-j}{p} \big) = \sum_{j=1}^{p-1} \big(
\tf{-(j-\ell)}{p} \big) =
\big( \tf{-1}{p} \big) \sum_{j=1}^{p-1} \big( \tf{j+(p-\ell)}{p} \big) = -
\big( \tf{-1}{p} \big) \big( \tf{p-\ell}{p} \big) = - \big(
\tf{\ell}{p} \big).$$

Since $p$ is prime, for any $k\in \N$ coprime with $p$, the sets
$\{1,2,\ldots,p-1\}$ and $\{k,2k,\ldots,(p-1)k\}$ coincide modulo $p$
and thus, by $p$-periodicity of the Legendre symbol, we have
$$\sum_{j=1}^{p-1} \big( \tf{k\ell \pm j}{p} \big) =  \sum_{j=1}^{p-1} \big(
\tf{k\ell \pm kj}{p} \big) =
\big( \tf{k}{p} \big) \sum_{j=1}^{p-1} \big( \tf{\ell \pm j}{p} \big) = -
\big( \tf{k}{p} \big) \big( \tf{\ell}{p} \big).$$

On the other hand, if $p\,|\,k$, both sides of \eqref{klj} vanish, and the
first equation in the statement is proved.

\sk For the second equation, splitting the sum $\sum_{j=1}^{2p-1} \big(
\tf{2 \ell \pm j}{p} \big)$ into sums over even and odd
indices, we get
\begin{eqnarray*}
\sum_{j=0}^{p-1} \big( \tf{2 \ell \pm (2j+1)}{p} \big) &=& \sum_{j=1}^{2p-1}
\big( \tf{2 \ell \pm j}{p} \big) -
\sum_{j=1}^{p-1} \big( \tf{2 \ell \pm 2j}{p} \big) = \sum_{j=0}^{p-1} \big(
\tf{2 \ell \pm j}{p} \big) - (\tf 2p) \sum_{j=0}^{p-1}
\big( \tf{\ell \pm j}{p} \big),
\end{eqnarray*}
and by \eqref{klj} we get \eqref{2l-2j+1}.
\end{proof}

\begin{lemma}\label{prop.legendres} Let $p=2q+1$ be a prime and $\ell \in
\N$ with $0\le \ell \le p-1$. Then,
\begin{align*}
(i) & \qquad  \qquad  \begin{array}{lcl}
\sum\limits_{j=1}^{p-1} \big( \tf{\ell + j}{p} \big) j & = &
p \sum\limits_{j=1}^{\ell - 1} \big( \tf{j}{p} \big) +
\sum\limits_{j=1}^{p-1} \big(\tf{j}{p} \big) j,
\msk \\
\sum\limits_{j=1}^{p-1} \big( \tf{\ell - j}{p} \big) j & = & \big(
\tf{-1}{p}\big) \Big( p \sum\limits_{j=1}^{p - \ell - 1}
\big( \tf{j}{p} \big) + \sum\limits_{j=1}^{p-1} \big( \tf{j}{p} \big) j
\Big),
\end{array}
\msk \\
(ii) & \qquad \qquad \begin{array}{lcl}
\sum\limits_{j=1}^{p-1} \big( \tf{2\ell + j}{p} \big) j & = & p
\sum\limits_{j=1}^{2 \ell - \big[\f {2\ell}p \big]\,p - 1} \big( \tf{j}{p}
\big) +
\sum\limits_{j=1}^{p-1} \big(\tf{j}{p} \big) j
\msk \\
\sum\limits_{j=1}^{p-1} \big( \tf{2 \ell - j}{p} \big) j & = & \big(
\tf{-1}{p}\big) \Big( p \sum\limits_{j=1}^{p + \big[\f {2\ell}p \big ]\,p -
2\ell - 1} \big( \tf{j}{p} \big) + \sum\limits_{j=1}^{p-1} \big( \tf{j}{p}
\big) j \Big).
\end{array}
\end{align*}
\end{lemma}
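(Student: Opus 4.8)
The plan is to establish (i) by reindexing the summation variable modulo $p$, and then to deduce (ii) from (i) by replacing $2\ell$ with its residue modulo $p$.

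For the first identity of (i) I would reindex by $m$, the residue of $\ell+j$ modulo $p$. As $j$ runs over $\{1,\dots,p-1\}$ the quantity $\ell+j$ runs over all residues except $m\equiv\ell$; explicitly $j=m-\ell$ for $m\in\{\ell+1,\dots,p-1\}$ and $j=m-\ell+p$ for $m\in\{0,\dots,\ell-1\}$, and the term $m=0$ drops out since $\big(\tf0p\big)=0$. By $p$-periodicity of the Legendre symbol this gives
\[
\sum_{j=1}^{p-1}\Big(\tf{\ell+j}{p}\Big)j=\sum_{m=\ell+1}^{p-1}\Big(\tf mp\Big)(m-\ell)+\sum_{m=1}^{\ell-1}\Big(\tf mp\Big)(m-\ell+p).
\]
Expanding the products, I would use $\sum_{m=1}^{p-1}\big(\tf mp\big)=0$ in the form $\sum_{m=\ell+1}^{p-1}\big(\tf mp\big)=-\big(\tf\ell p\big)-\sum_{m=1}^{\ell-1}\big(\tf mp\big)$; the $\ell$-linear coefficients of the partial Legendre sums then combine into $p\sum_{m=1}^{\ell-1}\big(\tf mp\big)$, the two terms $\mp\ell\big(\tf\ell p\big)$ cancel, and the $m$-weighted pieces reassemble into $\sum_{m=1}^{p-1}\big(\tf mp\big)m$. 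This is exactly the first identity of (i).

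For the second identity of (i), write $\big(\tf{\ell-j}{p}\big)=\big(\tf{-1}{p}\big)\big(\tf{j-\ell}{p}\big)$ using \eqref{legendre2p} and reindex $\sum_{j=1}^{p-1}\big(\tf{j-\ell}{p}\big)j$ by $m$, the residue of $j-\ell$ modulo $p$: now $j=m+\ell$ for $m\in\{1,\dots,p-1-\ell\}$, $j=m+\ell-p$ for $m\in\{p+1-\ell,\dots,p-1\}$, the term $m=0$ again drops, and the single residue missed is $m=p-\ell$. The same expand-and-collect computation (this time the boundary terms $\pm(p-\ell)\big(\tf{p-\ell}{p}\big)$ cancel directly and the partial sums emerge as $\sum_{m=1}^{p-1-\ell}\big(\tf mp\big)$) yields $\sum_{m=1}^{p-1}\big(\tf mp\big)m+p\sum_{m=1}^{p-1-\ell}\big(\tf mp\big)$; multiplying through by $\big(\tf{-1}{p}\big)$ gives the claim. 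The case $\ell=0$ is immediate, since then all partial Legendre sums that occur are either empty or the full sum, which vanishes.

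Finally, (ii) follows from (i) with no further computation. Put $\ell'=2\ell-[\tf{2\ell}{p}]\,p$; since $0\le2\ell\le2(p-1)<2p$ we have $[\tf{2\ell}{p}]\in\{0,1\}$ and $0\le\ell'\le p-1$, while $\big(\tf{2\ell\pm j}{p}\big)=\big(\tf{\ell'\pm j}{p}\big)$ by $p$-periodicity. Applying (i) to $\ell'$ and rewriting $\ell'-1=2\ell-[\tf{2\ell}{p}]\,p-1$ and $p-1-\ell'=p+[\tf{2\ell}{p}]\,p-2\ell-1$ reproduces the two displayed formulas of (ii). The only real difficulty in any of this is the bookkeeping of the reindexing — identifying precisely which $j$ cause $\ell+j$ (resp.\ $j-\ell$) to cross a multiple of $p$, isolating the single boundary residue that is missed, and invoking $\sum_{j=1}^{p-1}\big(\tf jp\big)=0$ at the right moment so that the $\ell$-linear terms telescope into the stated partial sums; there is no conceptual obstacle beyond this.
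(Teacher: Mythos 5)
Your proof is correct and uses essentially the same idea as the paper's: reindex by the residue of $\ell+j$ (resp.\ $j-\ell$) modulo $p$, track the single index at which the quantity crosses a multiple of $p$, and invoke $\sum_{j=1}^{p-1}\big(\tf jp\big)=0$ so the linear-in-$\ell$ pieces collapse into the stated partial Legendre sums. The only cosmetic difference is in part (ii): the paper does a case split on $1\le\ell\le q$ versus $q<\ell\le p-1$, whereas your unified substitution $\ell'=2\ell-[\tf{2\ell}p]p$ handles both at once — slightly tidier, but the same argument.
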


\begin{proof}
If $\ell =0$, then by the class number formula \eqref{eq.dirichlet} there is
nothing to prove. So, we assume that $\ell \ne 0$.
We want to compute the sums $\sum_{j=1}^{p-1} \big( \f{h\ell \pm j}{p} \big)
j$ for $h=1,2$. Let us first consider the case $h=1$. By
Lemma \ref{lem.legendres} we have
\begin{equation}
\label{ljs} \sum_{j=1}^{p-1} \big( \tf{\ell+j}{p} \big) j  =
\sum_{j=1}^{p-1} \big( \tf{\ell+j}{p} \big) (\ell+j) - \ell
\sum_{j=1}^{p-1} \big( \tf{\ell + j}{p} \big)  =  \sum_{j=1}^{p-1} \big(
\tf{\ell+j}{p} \big) (\ell+j) + \ell \big( \tf{\ell}{p} \big).
\end{equation}

Now, since $1\le j \le p-1$, $0\le j \le p-1$, we have $2\le j+\ell \le
2p-2$ and hence $j+\ell$ can be uniquely written as
\begin{equation}\label{j+l}
j+\ell=pq_j + r_j, \qquad 0 \le r_j \le p-1, \: q_j = \left\{
\begin{array}{ll}
0 & \: \text{if } j<p-\ell, \sk \\
1 & \: \text{if } j\ge p-\ell.
\end{array}\right.
\end{equation}
Then, from \eqref{ljs}, and using \eqref{j+l}, we have that
\begin{eqnarray*}
\sum_{j=1}^{p-1} \big( \tf{\ell+j}{p} \big) j &=& \sum_{j=1}^{p-\ell-1}
\big( \tf{r_j}{p} \big) r_j + \sum_{j=p-\ell}^{p-1}  \big(
\tf{r_j}{p} \big) (p+r_j) + \ell \big( \tf{\ell}{p} \big)  \\ &=&
\sum_{j=1}^{p-1} \big( \tf{r_j}{p} \big) r_j + p
\sum_{j=p-\ell}^{p-1} \big( \tf{r_j}{p} \big) + \ell\big( \tf{\ell}{p}
\big).
\end{eqnarray*}
Thus, using that
$$(r_0,r_1,\ldots,r_{p-\ell-1},r_{p-\ell},r_{p-\ell+1},\ldots,r_{p-1}) =
(\ell,\ell+1,\ldots,p-1,0,1,\ldots,\ell-1).$$
we get
\begin{equation}\label{skoll}
\sum_{j=1}^{p-1} \big( \tf{\ell+j}{p} \big) j = \sum_{\begin{smallmatrix}
j=1 \\ j\ne \ell \end{smallmatrix}}^{p-1}
\big( \tf{j}{p} \big) j + p \sum_{j=1}^{\ell-1}  \big( \tf{j}{p} \big) +
\ell \big( \tf{\ell}{p} \big) =  p \sum_{j=1}^{\ell-1}
\big( \tf{j}{p} \big) + \sum_{j=1}^{p-1} \big( \tf{j}{p} \big) j.
\end{equation}
By the previous expression we also have
\begin{equation*}\label{campinas-}
\sum_{j=1}^{p-1} \big( \tf{\ell-j}{p} \big) j =   \big( \tf{-1}{p} \big)
\sum_{j=1}^{p-1} \big( \tf{j+(p-\ell)}{p} \big) j = \big(
\tf{-1}{p} \big) \Big( p \sum_{j=1}^{p-\ell-1}  \big( \tf{j}{p} \big) +
\sum_{j=1}^{p-1} \big( \tf{j}{p} \big) j \Big).
\end{equation*}

Now, consider $h=2$. If $1 \le \ell \le q$ then $2 \le 2\ell \le p-1$ and we
can use \eqref{skoll} directly with $2\ell$ in place of
$\ell$. In the other case, if $q+1 \le \ell \le p-1$ then $1\le 2\ell - p
\le p-2$ and, by \eqref{skoll}, we have
$$\sum_{j=1}^{p-1} \big( \tf{2\ell+j}{p} \big) j =  \sum_{j=1}^{p-1} \big(
\tf{2 \ell - p + j}{p} \big) j
= p \sum_{j=1}^{2\ell-p-1}  \big( \tf{j}{p} \big) + \sum_{j=1}^{p-1} \big(
\tf{j}{p} \big) j.$$ In the remaining case, proceeding as
before and using \eqref{campinas-}, one gets the desired result in the
statement, and thus the proposition follows.
\end{proof}

\begin{lemma}\label{lem.legendres3}
Let $p$ be an odd prime and $\ell \in \N$ with $0\le \ell \le p-1$. Then,
\begin{equation*}\label{legendre odd}
\sum_{j=0}^{p-1} \big( \tf{2\ell \pm (2j+1)}{p} \big) j = \sum_{j=1}^{p-1}
\big( \tf{2\ell \pm j}{p} \big) j - (\tf2p) \sum_{j=1}^{p-1}
\big( \tf{\ell \pm j}{p} \big) j.
\end{equation*}
\end{lemma}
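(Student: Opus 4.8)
The plan is to mimic the proof of the companion identity \eqref{2l-2j+1} in Lemma~\ref{lem.legendres}, keeping track now of the linear weights. The key observation is that the map $j\mapsto \lfloor j/2\rfloor$ sends both the even index $j=2k$ and the odd index $j=2k+1$ to the same value $k$; this lets us package the odd sum we want and the even ``doubling'' sum into a single weighted sum over the doubled range $1\le j\le 2p-1$.

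Concretely, I would set
\[
T_{\pm} := \sum_{j=1}^{2p-1} \big(\tfrac{2\ell\pm j}{p}\big)\,\big\lfloor\tfrac{j}{2}\big\rfloor
\]
and split it according to the parity of $j$. The even indices $j=2k$ with $1\le k\le p-1$ contribute $\sum_{k=1}^{p-1}\big(\tfrac{2\ell\pm 2k}{p}\big)k = \big(\tfrac{2}{p}\big)\sum_{k=1}^{p-1}\big(\tfrac{\ell\pm k}{p}\big)k$ by multiplicativity of the Legendre symbol, while the odd indices $j=2k+1$ with $0\le k\le p-1$ contribute exactly $\sum_{k=0}^{p-1}\big(\tfrac{2\ell\pm(2k+1)}{p}\big)k$, which is the left-hand side of the Lemma. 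Since $\big(\tfrac{2}{p}\big)\sum_{k=1}^{p-1}\big(\tfrac{\ell\pm k}{p}\big)k$ is exactly the second sum on the right-hand side of the Lemma, it suffices to prove the single identity $T_{\pm} = \sum_{j=1}^{p-1}\big(\tfrac{2\ell\pm j}{p}\big)\,j$.

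To evaluate $T_{\pm}$, I would use the $p$-periodicity of $\big(\tfrac{\cdot}{p}\big)$ and break the range $1\le j\le 2p-1$ into $1\le j\le p-1$, the isolated term $j=p$, and $p+1\le j\le 2p-1$. Writing $j=p+j'$ with $1\le j'\le p-1$ in the last block and using $\lfloor(p+j')/2\rfloor = q + \lceil j'/2\rceil$ (recall $p=2q+1$), the blocks $j\le p-1$ and $j=p+j'$ combine to $\sum_{j=1}^{p-1}\big(\tfrac{2\ell\pm j}{p}\big)(\lfloor j/2\rfloor + \lceil j/2\rceil)$ together with $q\cdot\sum_{j=1}^{p-1}\big(\tfrac{2\ell\pm j}{p}\big)$, and the term $j=p$ adds $\big(\tfrac{2\ell}{p}\big)q$. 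Since $\lfloor j/2\rfloor + \lceil j/2\rceil = j$ and $\big(\tfrac{2\ell}{p}\big) + \sum_{j=1}^{p-1}\big(\tfrac{2\ell\pm j}{p}\big) = \sum_{j=0}^{p-1}\big(\tfrac{2\ell\pm j}{p}\big) = 0$ (the set $\{2\ell\pm j: 0\le j\le p-1\}$ being a complete residue system modulo $p$, or directly from \eqref{klj}), the multiple-of-$q$ contribution vanishes and we are left with $T_{\pm} = \sum_{j=1}^{p-1}\big(\tfrac{2\ell\pm j}{p}\big)\,j$, as required.

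I expect the only delicate point to be the bookkeeping of the floor and ceiling values over the shifted block $p+1\le j\le 2p-1$ and, in particular, checking that the isolated term $j=p$ supplies precisely the ``$j=0$'' residue needed to turn $\sum_{j=1}^{p-1}$ into the complete-system sum $\sum_{j=0}^{p-1}$ that then vanishes; everything else is routine once the set-up is in place. The argument is valid for every $\ell$ by periodicity, and it reduces to \eqref{eq.dirichlet} in the degenerate case $\ell=0$.
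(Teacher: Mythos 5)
Your proof is correct and follows essentially the same strategy as the paper's: both reduce the identity to a Legendre sum over the doubled range $1\le j\le 2p-1$, split by parity, fold the range by $p$-periodicity, and use the vanishing of the complete Legendre sum $\sum_{j=0}^{p-1}\big(\tfrac{2\ell\pm j}{p}\big)=0$. The only difference is a technical one: you work directly with the weight $\lfloor j/2\rfloor$ (so $\lfloor j/2\rfloor+\lceil j/2\rceil=j$ handles the folding), whereas the paper first doubles the identity and uses \eqref{2l-2j+1} to upgrade the weight $j$ to $2j+1$ before splitting by parity — either device works and the underlying mechanism is the same.
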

\begin{proof}
We first note that
\begin{eqnarray*}
2\sum_{j=0}^{p-1} \big( \tf{2\ell \mp (2j+1)}{p} \big) j &=&
\sum_{j=0}^{p-1} \big( \tf{2\ell \mp (2j+1)}{p} \big) (2j+1) \\&=&
\sum_{j=1}^{2p-1} \big( \tf{2\ell \mp j}{p} \big) j - \sum_{j=1}^{p-1} \big(
\tf{2\ell \mp 2j}{p} \big) 2j \\ &=& \sum_{j=1}^{p-1}
\big( \tf{2\ell \mp j}{p} \big) j + \sum_{j=p}^{2p-1} \big( \tf{2\ell \mp
j}{p} \big) j - 2 (\tf2p) \sum_{j=1}^{p-1} \big( \tf{\ell \mp
j}{p} \big) j,
\end{eqnarray*}
where in the first equality we have used \eqref{2l-2j+1}.
The second sum in the r.h.s.\@ of the above expression equals
\begin{eqnarray*}
\sum_{h=0}^{p-1} \big( \tf{2\ell \mp (p+h)}{p} \big) (p+h) =  p
\sum_{h=0}^{p-1} \big( \tf{2\ell \mp h}{p} \big) + \sum_{h=0}^{p-1} \big(
\tf{2\ell \mp h}{p} \big) h = \sum_{j=1}^{p-1} \big(
\tf{2\ell \mp j}{p} \big) j.
\end{eqnarray*}
Substituting this expression in the first one we get the desired result.
\end{proof}

We want to compute the sums
\begin{equation}\label{S1yS2}
\begin{split}
& S_1(\ell,p) := \sum_{j=1}^{p-1} \Big( \big( \tf{\ell - j}{p} \big) - \big(
\tf{\ell + j}{p} \big) \Big) j, \\ & S_2(\ell,p) :=
\sum_{j=0}^{p-1} \Big( \big( \tf{2\ell - (2j+1)}{p} \big) - \big( \tf{2\ell
+ (2j+1)}{p} \big) \Big) j,
\end{split}
\end{equation}
for $0\le \ell \le p-1$.
We are now in a position to prove the results that were used in Section 3.
\begin{proposition}\label{prop.dif legendres}
Let $p$ be an odd prime and $\ell \in \N$ with $0\le \ell \le p-1$. Then,
in the  notations in \eqref{aux sums2} we have
\begin{equation*}
S_1(\ell,p) = \left\{ \begin{array}{ll} p \, S_1^-(\ell,p) & \qquad p \equiv
1 \,(4), \sk \\
-p \, S_1^+(\ell,p) - 2\sum\limits_{j=1}^{p-1} (\tf{j}{p})j  & \qquad
p\equiv 3 \,(4), \end{array}\right.
\end{equation*}
and
\begin{equation*}
S_2(\ell,p)  =  \left\{ \begin{array}{lc} p \Big( S_2^-(\ell,p) -  (\tf 2p)
S_1^-(\ell,p) \Big) & p\equiv 1\,(4), \sk \\
-p \Big( S_2^+(\ell,p) -  (\tf 2p) S_1^+(\ell,p) \Big) + 2 \big( (\tf 2p)-1
\big) \sum\limits_{j=1}^{p-1} \big( \tf jp \big)j & p\equiv
3\,(4), \end{array} \right.
\end{equation*}
where $S_h(\ell,p)$ and $S_h^{\pm}(\ell,p)$ are defined in \eqref{S1yS2} and
\eqref{aux sums2} respectively.
\end{proposition}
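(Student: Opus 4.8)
The plan is to reduce both identities to Lemma \ref{prop.legendres} and Lemma \ref{lem.legendres3}, carrying the quadratic residue sign $\big(\tf{-1}p\big)=(-1)^{(p-1)/2}$ (equal to $+1$ when $p\equiv 1\,(4)$ and $-1$ when $p\equiv 3\,(4)$) through the bookkeeping. Throughout I write $\Sigma := \sum_{j=1}^{p-1}\big(\tf jp\big)j$ for the common term that shows up in those lemmas.

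First I would handle $S_1(\ell,p) = \sum_{j=1}^{p-1}\big(\tf{\ell-j}p\big)j - \sum_{j=1}^{p-1}\big(\tf{\ell+j}p\big)j$. Lemma \ref{prop.legendres}(i) rewrites each of the two sums as $p$ times a truncated Legendre sum plus $\Sigma$ (the second one also carrying a factor $\big(\tf{-1}p\big)$). Substituting and collecting, the $\Sigma$-terms combine into $\big(\big(\tf{-1}p\big)-1\big)\Sigma$, which vanishes for $p\equiv 1\,(4)$ and equals $-2\Sigma$ for $p\equiv 3\,(4)$, while the truncated sums assemble into $p\,S_1^-(\ell,p)$ in the first case and $-p\,S_1^+(\ell,p)$ in the second — here one uses that $[\ell/p]=0$ for $0\le\ell\le p-1$, so that $S_1^\pm(\ell,p)=\sum_{j=1}^{p-\ell-1}\big(\tf jp\big)\pm\sum_{j=1}^{\ell-1}\big(\tf jp\big)$ matches the output of the lemma. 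This yields the stated formula for $S_1(\ell,p)$.

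Next, for $S_2(\ell,p)$ I would apply Lemma \ref{lem.legendres3} to both halves of the defining sum, which turns $S_2(\ell,p)$ into $\big(\sum_{j=1}^{p-1}\big(\tf{2\ell-j}p\big)j - \sum_{j=1}^{p-1}\big(\tf{2\ell+j}p\big)j\big) - \big(\tf 2p\big)\big(\sum_{j=1}^{p-1}\big(\tf{\ell-j}p\big)j - \sum_{j=1}^{p-1}\big(\tf{\ell+j}p\big)j\big)$; the second bracket is exactly $S_1(\ell,p)$, already computed. For the first bracket I would invoke Lemma \ref{prop.legendres}(ii), which again expresses each sum as $p$ times a truncated Legendre sum — now with cut-offs governed by $[2\ell/p]$, which is $0$ for $1\le\ell\le q$ and $1$ for $q<\ell\le p-1$, matching precisely the upper limits $p+[2\ell/p]p-2\ell-1$ and $2\ell-[2\ell/p]p-1$ in \eqref{aux sums2} — plus $\Sigma$. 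Collecting as before gives $p\,S_2^-(\ell,p)$ for $p\equiv 1\,(4)$ and $-p\,S_2^+(\ell,p)-2\Sigma$ for $p\equiv 3\,(4)$; combining with $-\big(\tf 2p\big)S_1(\ell,p)$ and simplifying the $\big(\tf 2p\big)$-coefficients produces the asserted expression, the $p\equiv 3\,(4)$ case generating the extra term $2\big(\big(\tf 2p\big)-1\big)\Sigma$.

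The case $\ell=0$ is covered trivially: all truncated sums are empty, so $S_h^\pm(0,p)=0$ and the formulas reduce to $S_1(0,p)=\big(\big(\tf{-1}p\big)-1\big)\Sigma$ and similarly for $S_2$, which is what the definitions give. The only genuine bookkeeping hazard — and the step I expect to demand the most care — is verifying that the truncated summation ranges produced by Lemma \ref{prop.legendres}(ii) really coincide with the ranges appearing in \eqref{aux sums2} across both regimes of $\ell$; once the floor-function case split $\ell\le q$ versus $\ell>q$ is made explicit, this is routine, and the rest of the argument is a linear combination of already-established identities.
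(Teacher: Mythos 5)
Your proposal is correct and follows essentially the same route as the paper: the paper likewise derives $S_1(\ell,p)$ directly from Lemma \ref{prop.legendres}(i), then handles $S_2(\ell,p)$ by applying Lemma \ref{lem.legendres3} to split off a $\big(\tf 2p\big)S_1(\ell,p)$ term and invoking Lemma \ref{prop.legendres}(ii) on the remaining bracket, with the $\big(\tf{-1}p\big)$ sign driving the case split modulo $4$ exactly as you describe. Your bookkeeping of the $\Sigma$-terms, the floor-function ranges in \eqref{aux sums2}, and the degenerate $\ell=0$ case all check out.
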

\begin{proof}
By Lemma \ref{prop.legendres} (i), we have
\begin{eqnarray*}
S_1(\ell,p) & = & \big( \tf{-1}{p}\big) \Big( p \sum_{j=1}^{p - \ell - 1}
\big( \tf{j}{p} \big) + \sum_{j=1}^{p-1} \big( \tf{j}{p}
\big) j \Big) - \Big( p \sum_{j=1}^{\ell - 1} \big( \tf{j}{p} \big) +
\sum_{j=1}^{p-1} \big(\tf{j}{p} \big) j \Big).
\end{eqnarray*}
By using \eqref{legendre2p} and \eqref{aux sums2} we get the first
expression in the statement.

\sk On the other hand, by Lemma \ref{lem.legendres3} we have that {\small
\begin{eqnarray*}
S_2(\ell,p) & = & \Big( \sum_{j=1}^{p-1} \big( \tf{2\ell - j}{p} \big) j -
(\tf2p) \sum_{j=1}^{p-1} \big( \tf{\ell - j}{p} \big) j
\Big) - \Big( \sum_{j=1}^{p-1} \big( \tf{2\ell + j}{p} \big) j - (\tf2p)
\sum_{j=1}^{p-1} \big( \tf{\ell + j}{p} \big) j \Big) \\
&=&  \sum_{j=1}^{p-1} \Big( \big( \tf{2\ell - j}{p} \big)  - \big( \tf{2\ell
+ j}{p} \big) \Big) j -  (\tf2p) \sum_{j=1}^{p-1} \Big(
\big( \tf{\ell - j}{p} \big) - \big( \tf{\ell + j}{p} \big) \Big) j.
\end{eqnarray*}}
By using Lemma \ref{prop.legendres} (ii) we see that $S_2(\ell,p)$ equals
\begin{eqnarray*}
&&(\tf{-1}{p}) \Big( \sum_{j=1}^{p+ \big[\f {2\ell}p \big]\,p-2\ell-1} \big(
\tf{j}{p} \big)  + \sum_{j=1}^{p-1} \big( \tf{j}{p} \big) j \Big)\\&&  -
\Big( \sum_{j=1}^{2\ell-\big [\f {2\ell}p \big ]\,p-1} \big( \tf{j}{p}
\big)  + \sum_{j=1}^{p-1} \big( \tf{j}{p} \big) j \Big) -
(\tf 2p) S_1(\ell,p),
\end{eqnarray*} and now applying \eqref{eq.dirichlet} and using
\eqref{aux sums2} we get the desired result,
and hence the proposition follows.
\end{proof}

\section*{Acknowledgements} The research of P.\@ Gilkey was supported by Project MTM2006-01432 (Spain) and by the
University of C\'ordoba (Argentine). R.\@ Podest\'a wishes to thank the hospitality at the Universidad Aut\'onoma de Madrid (Spain).

\end{document}